\theoremstyle{plain}
\newtheorem{theorem}{Theorem}
\newtheorem{lemma}{Lemma}
\theoremstyle{definition}
\newtheorem{remark}{Remark}
\numberwithin{equation}{section}
\newcommand{\e}{\epsilon}
\newcommand{\s}{\sigma}
\newcommand{\R}{\mathbb R}
\newcommand{\N}{\mathbb N}
\newcommand{\C}{\mathbb C}
\newcommand{\Rn}{\mathbb R^n}
\newcommand{\rst}[1]{\ensuremath{{\mathbin\upharpoonright}%
\raise-.5ex\hbox{$#1$}}}
\def\p{\partial}
\def \e {\epsilon}
\def \s {\sigma}
\def \peso {e^{-\theta t^{-\sigma}}}
\def \pesodos {e^{-\frac{\theta}{2} t^{-\sigma}}}
\begin{document}

\title[Analyticity of solutions to parabolic evolutions and applications]{Analyticity of solutions to parabolic evolutions and applications}
\author{Luis Escauriaza}
\address[Luis Escauriaza]{Universidad del Pa{\'\i}s Vasco/Euskal Herriko Unibertsitatea\\Dpto. de Matem\'aticas\\Apto. 644, 48080 Bilbao, Spain.}
\email{luis.escauriaza@ehu.eus}
\thanks{The first two authors are supported  by Ministerio de Ciencia e Innovaci\'on grant MTM2014-53145-P. The last author is partially supported 
by FA9550-14-1-0214 of the EOARD-AFOSR and the National Natural Science Foundation of China under grants 11501424 and 11371285.}
\author{Santiago Montaner}
\address[Santiago Montaner]{Universidad del Pa{\'\i}s Vasco/Euskal Herriko
Unibertsitatea\\Dpto. de Matem{\'a}ticas\\Apto. 644, 48080 Bilbao, Spain.}
\email{santiago.montaner@ehu.eus}
\author{Can Zhang}
\address[Can Zhang]{School of Mathematics and Statistics, Wuhan University, 430072 Wuhan, China; Sorbonne Universit\'es, UPMC Univ. Paris 06, CNRS UMR 7598, Laboratoire Jacques- Louis Lions, F-75005 Paris, France.}
\email{zhangcansx@163.com}
\keywords{observability, bang-bang property}
\subjclass{Primary: 35B37}

\begin{abstract}
We find new quantitative estimates on the space-time analyticity of solutions to linear parabolic equations with analytic coefficients near the initial time. We apply the estimates  to obtain observability inequalities and null-controllability of parabolic evolutions over measurable sets.
\end{abstract}
\maketitle
\section{Introduction}
This work is concerned with the study of quantitative estimates up to the boundary of analyticity in the spatial and time variables of solutions to boundary value parabolic problems for small values of the time variable. If $\Omega\subset \R^n$ is a bounded domain, we obtain new quantitative estimates of analyticity for solutions of 
\begin{equation}\label{problemaparabolico}
\begin{cases}
\p_t u+\mathcal Lu=0,\ &\text{in}\ \Omega\times (0,1],\\
u=Du=\ldots=D^{m-1}u=0,\ &\text{in}\ \p\Omega \times (0,1].
\end{cases}
\end{equation}
 Throughout the work $\mathcal L$ is defined by
\begin{equation}\label{E: eltipodeoperadore}
\mathcal L= (-1)^m \sum_{|\alpha|\leq 2m} a_{\alpha}(x,t) \p_x^\alpha,
\end{equation}
where $\alpha=(\alpha_1,\ldots,\alpha_n)$ is in $\N^n $ and $|\alpha|=\alpha_1+\cdots+\alpha_n$;
the coefficients of $\mathcal L$ are bounded and satisfy a uniform parabolicity condition, i.e., there is $\varrho>0$ such that 
\begin{equation}\label{parabolicidad}
\begin{aligned}
&\sum_{|\alpha|= 2m}  a_\alpha(x,t)\xi^{\alpha}\ge\varrho |\xi|^{2m},\ \text{for}\ \xi\in\R^n,\ (x,t)\in \Omega\times[0,1],\\
&\sum_{|\alpha|\le 2m}\|a_\alpha\|_{L^\infty(\Omega\times [0,1])}\le\varrho^{-1}.
\end{aligned}
\end{equation}

Our approach to prove quantitative estimates of analyticity is based on an induction process which employs $W_2^{2,1}(\Omega\times[0,1])$ Schauder estimates for solutions to parabolic initial-boundary value problems. This estimates were first derived in \cite{Solonnikov} for parabolic problems with quite general boundary conditions. In order to employ these estimates, we must assume that  $\partial\Omega$ is globally of class $C^{2m-1,1}$. Thus, the $W^{2m,1}_2(\Omega\times [0,1])$ Schauder estimates hold \cite[Theorem 6]{DongKim}; i.e., there is $K>0$ such that
\begin{equation}\label{E: Schauder estimate}
\|\partial_t u\|_{L^2(\Omega\times (0,1))}+\sum_{|\alpha|\le 2m}\|\partial^\alpha_xu\|_{L^2(\Omega\times (0,1))}\le K \left[\|F\|_{L^2(\Omega\times (0,1))} + \|u\|_{L^2(\Omega\times (0,1))}\right],
\end{equation}
when $u$ satisfies
\begin{equation*}
\begin{cases}
\p_t u+\mathcal Lu=F,\ &\text{in}\ \Omega\times (0,1],\\
u=Du=\ldots=D^{m-1}u=0,\ &\text{in}\ \p\Omega \times (0,1],\\
u(0)=0,\ &\text{in}\ \Omega.
\end{cases}
\end{equation*}

In this setting, we improve the quantitative estimates on the space-time analyticity of solutions to \eqref{problemaparabolico} available in the literature when the coefficients of $\mathcal L$ and the boundary of $\Omega$ are analytic. As far as we understand, the best quantitative bounds that we can infer or derive for solutions to  \eqref{problemaparabolico} from the reasonings in \cite{Friedman3,Friedman2, Friedman1,Tanabe,KinderlehrerNirenberg,Komatsu,Takac0,Takac}\color{black}\footnote{We explain in Section \ref{comment} our understanding of previous results.}\color{black}
are the following:

\vspace{0.1cm}
{\it There is $0<\rho\le 1$, $\rho=\rho(\varrho, m, n,\p \Omega)$ such that for $(x,t)$ in $\overline\Omega\times (0,1]$, $\alpha\in\N^n$ and $p\in\N$,

\begin{equation}\label{E: kimalito}
|\partial_x^\alpha\partial_t^p u(x,t)|\le \rho^{-1-\frac{|\alpha|}{2m}-p}\left(|\alpha|+p\right)!\, t^{-\frac{|\alpha|}{2m}-p-\frac{n+2m}{4m}}\|u\|_{L^2(\Omega\times(0,1))},
\end{equation}
where $2m$ is the order of the evolution and $|\alpha|=\alpha_1+\dots+\alpha_n$.}
\vspace{0.1cm}

The later can be seen to hold when the boundary of $\Omega$ is analytic and the coefficients of the underlying linear parabolic equation satisfy for some $0<\varrho\le 1$ bounds like
\begin{equation*}\label{E condiciongeneral}
|\partial_x^\alpha\partial_t^p A(x,t)|\le {\varrho}^{-1-|\alpha|-p}\left(|\alpha|+p\right)!,\ \text{for all}\ (x,t)\in\overline\Omega\times [0,1],\ \alpha\in\N^n\ \text{and}\ p\in\N.
\end{equation*}

A first observation regarding \eqref{E: kimalito} is that it blows up as $t$ tends to zero, something unavoidable since it holds for arbitrary $L^2(\Omega)$ initial data; however, \eqref{E: kimalito} provides a lower bound $\sqrt[2m]{\rho t}$ for the radius of convergence of the Taylor series in the spatial variables around any point in $\overline\Omega$ of the solution $u(\cdot,t)$ at times $0<t\le 1$. This lower bound shrinks to zero as $t$ tends to zero and does not reflect the \textit{infinite speed of propagation} of parabolic evolutions. Thus, it would be desirable to prove a quantitative estimate of space-time analyticity which provides a positive lower bound of the spatial radius of convergence for small values of $t$.

Concerning this and with the purpose to prove the interior and boundary null controllability of parabolic evolutions with \emph{time-independent} analytic coefficients over bounded analytic domains and with bounded controls acting over measurable sets of positive measure, we derived in \cite{AE,AEWZ,EMZ} the following quantitative estimates on the space-time analyticity of the solutions of such parabolic evolutions: there is $0<\rho\le 1$ such that for $(x,t)$ in $\overline\Omega\times (0,1]$, $\alpha\in\N^n$ and $p\in\N$,
\begin{equation}\label{estimatescontrol}
|\p_x^\alpha \p_t^p u(x,t)|\le  e^{1/\rho t^{1/(2m-1)}}\rho^{-|\alpha|-p} t^{-p}  \left(|\alpha|+p\right)! \|u\|_{L^2(\Omega\times(0,1))}.
\end{equation}
This was done by quantifying each step in a reasoning developed in \cite{LandisOleinik}, which reduces the study of the strong unique continuation property within characteristic hyperplanes for solutions of {\it time-independent} parabolic evolutions to its elliptic counterpart.

The bound \eqref{estimatescontrol} shows that the space-time Taylor series expansion of solutions converges absolutely over $B_\rho(x)\times ((1-\rho)t,(1+\rho)t)$, for some $0<\rho\le 1$, when $(x,t)$ is in $\overline\Omega\times (0,1]$. The later is an essential feature for its applications to the null-controllability of parabolic evolutions over measurable sets, while \eqref{E: kimalito} is not appropriate for such purpose \cite{AE,AEWZ,PW1,W1,C1}. Nevertheless, the reasonings leading to \eqref{estimatescontrol} in \cite{EMZ} can not be extended  to {\it time-dependent} parabolic evolutions.  Also, one can use upper bounds of the holomorphic extension to $\C^n$ of the fundamental solution of higher order parabolic equations or systems with constant coefficients \cite[p. 15 (15);  pp. 47-48 Theorem 1.1 (3)]{Eidelman} and Cauchy's theorem for the representation of derivatives of holomorphic functions as path integrals, to show that there is $\rho=\rho(n,m)$, $0<\rho\le 1$, such that the solution to
\begin{equation*}
\begin{cases}
\partial_t u+(-\Delta)^m u=0,\ &\text{in}\ \Rn\times (0,+\infty),\\
u(0)=u_0,\ &\text{in}\ \Rn,
\end{cases}
\end{equation*}
satisfies
\begin{equation}\label{E: quecachondeo}
|\partial_x^\alpha\partial_t^pu(x,t)|\le \rho^{-1-\frac{|\alpha|}{2m}-p}|\alpha|!^{\frac 1{2m}}\,p!\, t^{-\frac{|\alpha|}{2m}-p-\frac{n}{4m}}\|u_0\|_{L^2(\Rn)},
\end{equation}
when $\alpha\in\N^n$ and $p\in\N$. Thus, the radius of  convergence of the Taylor series expansion  of $u(\cdot, t)$ around points in $\Rn$ is $+\infty$ at all times $t>0$. The same holds when $(-\Delta)^m$ is replaced by other elliptic operators or systems of order $2m$ with constant coefficients. Also, observe that \eqref{estimatescontrol} is somehow in between \eqref{E: kimalito} and \eqref{E: quecachondeo}, since
\begin{equation*}
{t^{-\frac{|\alpha|}{2m}}} \lesssim |\alpha|!^{1-\frac{1}{2m}}e^{1/\rho t^{1/(2m-1)}},\ \text{for}\ \alpha\in\N^n,\ t>0.
\end{equation*}
Here, we derive a formal proof of \eqref{estimatescontrol} valid for all parabolic operators. To carry it out we quantify by induction the growth of each derivative $\p_x^\alpha \p_t^p$ of a solution to  \eqref{problemaparabolico} with respect to the weight function
\begin{equation*}
t^pe^{-\theta t^{-\s}},\ \text{with}\ 0\le\theta\le 1\ \text{and}\ \s=\tfrac1{2m-1}\, ,
\end{equation*}
over $\overline\Omega\times [0,1]$. To accomplish it we use the the full time interval of existence of the solutions before time $t$, the $W^{2m,1}_2$ Schauder estimate \eqref{E: Schauder estimate}, the weighted $L^2$ estimates in Lemmas \ref{Lema:regularidad1},  \ref{Lema:regularidad3} and \ref{Lema:regularidad4} and the inequalities \eqref{unadelasclaves}. We mention that the precise behavior of the bounds in \eqref{unadelasclaves} is key for our reasonings. The novelty of our proof rests on the fact that we use the weighted $L^2$ estimates in Lemmas \ref{Lema:regularidad1},  \ref{Lema:regularidad3} and \ref{Lema:regularidad4}. 

Throughout the work $\nu$  is the  exterior unit normal to the boundary of $\Omega$, $d\s$ denotes surface measure on $\partial\Omega$, $B_R$ stands for the open ball of radius $R$ centered at $0$ and $B_R^+=B_R\cap\{x_n> 0\}$. To describe the analyticity of a piece of  boundary  $B_R(q_0)\cap\p \Omega$ with $q_0$ in $\partial\Omega$,  we assume that for each $q$ in $B_R(q_0)\cap\partial\Omega $ we can find, after a translation and rotation, a new coordinate system (in which $q=0$) and an analytic function $\varphi : B_{\varrho}'\subset \R^{n-1}\longrightarrow\R$ verifying
\begin{equation}\label{E:descripcionfrontera}
\begin{aligned}
\varphi(0'&)=0,\ |\partial_{x'}^{\alpha}\varphi(x')|\le |\alpha|! \,\varrho^{-|\alpha|-1}\, ,\ \text{when}\ x'\in B_{\varrho}',\ \alpha\in \N^{n-1},\\
&B_{\varrho}\cap\Omega=B_\varrho\cap\{(x',x_n): x'\in B_{\varrho}',\  x_n>\varphi(x')\},\\
&B_{\varrho}\cap\partial\Omega=B_\varrho\cap\{(x',x_n): x'\in B_{\varrho}',\ x_n=\varphi(x')\},
\end{aligned}
\end{equation}
where $B'_{\varrho}=\{x' \in \R^{n-1},\ |x'|<\varrho\}$. Regarding the analytic regularity of the coefficients, we consider the following conditions:

\vspace{0.1cm}
{\it Let $x_0$ in $\overline{\Omega}$, there is $\varrho>0$ such that for any $\alpha\in\N^n$ and $p\in\N$,
\begin{align}
|\partial_{x}^\gamma \partial_t^p a_\alpha(x,t)|&\leq  \varrho^{-1-|\gamma|-p}|\gamma|!p!,\ \ \text{in}\ B_R(x_0)\cap\overline{\Omega}\times [0,1], \label{coeff:space_analytic}\\
|\partial_t^p a_\alpha(x,t)|&\leq  \varrho^{-1-p}p!,\ \ \text{in } \ \overline{\Omega}\times [0,1]. \label{coeff:time_analytic}
\end{align}}

The main result in this work is the following:
\begin{theorem}\label{lagrananaliticidad} 
Let $x_0$ be in $\overline{\Omega}$, $0<R\le 1$. Assume that $\mathcal L$ satisfies \eqref{parabolicidad}, \eqref{coeff:space_analytic}, \eqref{coeff:time_analytic} and $B_R(x_0)\cap\p\Omega$ is analytic when it is non-empty. Then, there is $\rho=\rho(\varrho,m,n)$, $0<\rho\leq 1$, such that the inequality
\begin{equation} \label{todanaliticidad}
|\p_x^\alpha \p_t^p u(x,t)|\leq e^{1/\rho t^{1/(2m-1)}}  \rho^{-1-|\alpha|-p}R^{-|\alpha|} t^{-p}\left(|\alpha|+p\right)!\,  \|u\|_{L^2(\Omega\times(0,1))},
\end{equation}
holds   for all $\alpha\in \N^n$, $p\in \N$ and $(x,t)\in B_{\frac R2}(x_0)\cap \overline{\Omega}\times (0,1]$, when $u$ solves \eqref{problemaparabolico}. 
\end{theorem}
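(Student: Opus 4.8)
The plan is to deduce \eqref{todanaliticidad} from weighted $W^{2m,1}_2$ estimates for the time derivatives $\partial_t^p u$, proved by induction on $p$, and then to recover the mixed derivatives $\partial_x^\alpha\partial_t^p u$ for arbitrary $\alpha$ from the equation by elliptic bootstrapping; a Sobolev embedding turns the $L^2$ bounds into the pointwise bound \eqref{todanaliticidad}. The weight carried at level $p$ is $t^p e^{-\theta t^{-\sigma}}$, with $\sigma=1/(2m-1)$ and $0<\theta\le 1$ fixed (small, depending only on $\varrho,m,n$): it vanishes to infinite order at $t=0$, so that $t^p e^{-\theta t^{-\sigma}}\partial_t^p u$ has zero initial trace and the Schauder estimate \eqref{E: Schauder estimate} applies on the whole cylinder $\Omega\times(0,1]$ --- no localization in $t$ is needed, the full history $(0,t]$ is used. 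Removing the weight at the end and using $1/\rho\ge 1\ge\theta$ produces the factor $e^{1/\rho\, t^{1/(2m-1)}}$, while the bookkeeping produces $t^{-p}$, $(|\alpha|+p)!$, $\rho^{-1-|\alpha|-p}$ and $R^{-|\alpha|}$.

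\textbf{Interior induction.} If $B_R(x_0)\cap\partial\Omega=\emptyset$, differentiating \eqref{problemaparabolico} $p$ times in $t$ shows that $v=\partial_t^p u$ solves, with the same lateral conditions,
\begin{equation*}
\partial_t v+\mathcal Lv=F_p:=-\sum_{j=1}^{p}\binom{p}{j}(-1)^m\sum_{|\alpha|\le 2m}(\partial_t^j a_\alpha)\,\partial_x^\alpha\partial_t^{p-j}u .
\end{equation*}
Hence $w=t^p e^{-\theta t^{-\sigma}}v$ has zero initial data and solves $\partial_t w+\mathcal Lw=t^p e^{-\theta t^{-\sigma}}F_p+(p\,t^{-1}+\theta\sigma\,t^{-1-\sigma})\,w$. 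Applying \eqref{E: Schauder estimate} to $w$: the term $p\,t^{-1}w$ reduces, after writing $\partial_t^p u=\partial_t(\partial_t^{p-1}u)$ and using the equation, to the level-$(p-1)$ induction hypothesis with its own weight $t^{p-1}e^{-\theta t^{-\sigma}}$; the genuinely singular term $\theta\sigma\,t^{-1-\sigma}w$ is controlled, at the cost of a fixed power of $\theta^{-1}$ independent of $p$, by the $W^{2m,1}_2$ quantity at the previous level --- this is precisely what Lemmas \ref{Lema:regularidad1}, \ref{Lema:regularidad3} and \ref{Lema:regularidad4} provide, and $\sigma=1/(2m-1)$ is the borderline exponent for which such an absorption is possible; and $\|t^p e^{-\theta t^{-\sigma}}F_p\|_{L^2}$ is bounded by distributing the weight across each summand, using \eqref{coeff:time_analytic} for $\partial_t^j a_\alpha$ and the induction hypothesis, which controls $\partial_x^\alpha\partial_t^{p-j}u$ with $|\alpha|\le 2m$ through the $W^{2m,1}_2$ bound on $\partial_t^{p-j}u$. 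The sum over $j$ gives $\sum_j\binom{p}{j}j!(p-j)!=(p+1)!$; choosing the induction constant large and $\rho$ small closes the loop, the sharp form of \eqref{unadelasclaves} being what keeps the various factorial ratios (arising when redistributing the weight and passing between $L^2$ and $W^{2m,1}_2$) uniformly bounded. Once $\|t^p e^{-\theta t^{-\sigma}}\partial_t^p u\|_{W^{2m,1}_2}$ is controlled for every $p$, the derivatives $\partial_x^\alpha\partial_t^p u$ with arbitrary $|\alpha|$ follow from the elliptic equation $\mathcal L(\partial_t^p u)=-\partial_t^{p+1}u$, interior elliptic estimates, \eqref{coeff:space_analytic}, and a further round of the same factorial bookkeeping, ending with Sobolev embedding.

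\textbf{Boundary case.} When $q_0\in B_R(x_0)\cap\partial\Omega$, flatten the boundary near $q_0$ by the analytic change of variables $y=(x',\,x_n-\varphi(x'))$ supplied by \eqref{E:descripcionfrontera}. In the $y$-coordinates $\Omega$ becomes locally $\{y_n>0\}$, the conditions $u=Du=\dots=D^{m-1}u=0$ are preserved, and $\mathcal L$ turns into a uniformly parabolic operator of order $2m$ whose coefficients still obey bounds of the form \eqref{coeff:space_analytic}--\eqref{coeff:time_analytic} with constants depending only on $\varrho,m,n$, because $\varphi$ is analytic with the explicit bounds in \eqref{E:descripcionfrontera}. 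One runs the induction exactly as in the interior for the tangential and time derivatives $\partial_{y'}^{\alpha'}\partial_t^p u$ --- the transformed Schauder estimate and the weighted Lemmas still apply --- and recovers the normal derivatives $\partial_{y_n}^k$ in $2m$ finite steps by solving the equation for $\partial_{y_n}^{2m}w$ in terms of lower-order normal derivatives and already-controlled tangential and time derivatives, using that the coefficient of $\partial_{y_n}^{2m}$ is bounded below by \eqref{parabolicidad}.

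\textbf{Main difficulty.} The real work is the uniform constant bookkeeping: one must tune $\theta$ and the induction constants so that the lower-order singular term $\theta\sigma\,t^{-1-\sigma}w$ generated by the $t$-weight stays dominated as the factorials $(|\alpha|+p)!$ accumulate, with the exponent $\sigma=1/(2m-1)$ being exactly the value that makes the weighted $L^2$ estimates of Lemmas \ref{Lema:regularidad1}, \ref{Lema:regularidad3} and \ref{Lema:regularidad4} close; the precise decay rates in \eqref{unadelasclaves} are what prevent the induction constants from blowing up. By comparison, the a priori regularity needed to justify the differentiations (obtained by approximation and standard parabolic theory), the boundary flattening, and the final passage from weighted $L^2$ to $L^\infty$ are routine.
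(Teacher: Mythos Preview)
Your proposal captures the essential tools --- the weight $t^p e^{-\theta t^{-\sigma}}$, the weighted $L^2$ estimates of Lemmas \ref{Lema:regularidad1}, \ref{Lema:regularidad3}, \ref{Lema:regularidad4}, the inequality \eqref{unadelasclaves}, Sobolev embedding, boundary flattening --- and your boundary paragraph is close to what the paper does. But the interior step you call ``elliptic bootstrapping'' has a genuine gap.

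Writing $\mathcal L(\partial_t^p u) = -\partial_t^{p+1} u$ and invoking interior elliptic analyticity at fixed time is circular: those estimates need the source $\partial_t^{p+1} u(\cdot,t)$ to already be spatially analytic with the target bounds, whereas from your time induction you only have $W^{2m}_2$ control of it. Iterating elliptic regularity $k$ times just pushes the requirement onto $\partial_t^{p+k} u$ and never closes. (The paper in fact remarks that the Landis--Oleinik reduction of spatial to temporal derivatives --- which is what this amounts to --- does not extend to time-dependent coefficients.) What the paper does instead is a separate \emph{parabolic} induction on $|\gamma|$ (Lemma \ref{Lem:interior} interior, Lemma \ref{Th:tan_analy} tangential): differentiate the parabolic equation in $x$, apply the weighted estimate of Lemma \ref{Lema:regularidad4} (resp.\ \ref{Lema:regularidad3}) to $\partial_x^\gamma u$ on a shrinking cylinder $B_{r+\delta}\times(0,1]$, and bound the commutator $F_\gamma$ by induction on $|\gamma|$.

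A second, more technical point: the weighted estimate returns a term with the \emph{weaker} exponential weight $e^{-\frac{k-1}{k}\theta t^{-\sigma}}$, so the induction hypothesis must hold for \emph{every} $\theta\in(0,1]$, not a single fixed one. One then chooses $k=|\gamma|$ and $\delta=(R-r)/|\gamma|$, and \eqref{unadelasclaves} together with the uniform bound $(1-1/|\gamma|)^{-(2b+1)(|\gamma|-2m)}\le N$ closes the loop with $[\theta^b(R-r)]^{-|\gamma|}|\gamma|!$ growth. Your proposal fixes $\theta$ once and for all, which would not survive this step.

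The paper's actual architecture is: time-only induction with $\theta=0$ (Lemma \ref{Th:time_analyticity}); spatial-only induction with the exponential weight (Lemmas \ref{Lem:interior}, \ref{Th:tan_analy}); a combined double induction on $(p,|\gamma|)$ using both as base cases (Lemma \ref{Th:time_tan:analy}); and finally an induction on $2mp+l+|\gamma|$ recovering normal derivatives by solving the equation for $\partial_n^{l+1}$ in terms of $\partial_n^{\le l}$ (Lemma \ref{Lem:normal_analy}) --- not ``$2m$ finite steps'', but an unbounded induction. The spatial/tangential induction with varying $\theta$ is the heart of the argument, and it is precisely what your interior sketch is missing.
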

\begin{remark}\label{R:1}
If we only assume that the coefficients of $\mathcal L$ are measurable in the time variable, satisfy \eqref{coeff:space_analytic} for $p=0$ and $B_R(x_0)\subset \Omega$, then  \eqref{todanaliticidad} holds in $B_{R/2}(x_0)\times(0,1]$ with $p=0$. This follows from Remark \ref{R: unacosiat}.

If we only assume \eqref{coeff:space_analytic} for some $x_0$ in $\overline\Omega$, so that some of the coefficients of $\mathcal L$ may not be globally analytic in the time-variable over $\overline\Omega$, the solutions of \eqref{problemaparabolico} are still analytic in the spatial variable over $B_{\frac R2}(x_0)\cap\overline\Omega\times [0,1]$ with  a lower bound on the radius of analyticity independent of time but only Gevrey of class $2m$ in the time-variable; i.e.,
\begin{equation*}
|\p_x^\alpha \p_t^p u(x,t)|\le e^{1/\rho t^{1/(2m-1)}}  \rho^{-1-|\alpha|-p} R^{-|\alpha|} \left(|\alpha|+2mp\right)!\,  \|u\|_{L^2(\Omega\times(0,1))}.
\end{equation*}
when $(x,t)\in B_{\frac R2}(x_0)\cap\overline\Omega\times (0,1]$, $\alpha\in \N^n$ and $p\in \N$.

At the end of Section \ref{main_analyticity} we give a counterexample showing that solutions can fail to be time-analytic at all points of a hyperplane $\Omega\times\{t_0\}$, when some of the coefficients are not time-analytic in a proper subdomain $\Omega'\times\{t_0\}\subset  \Omega\times\{t_0\}$, $t_0>0$. Thus,  the lack of time-analyticity of the coefficients in a subset of a characteristic hyperplane $t=t_0$ can propagate to the whole hyperplane $t=t_0$. 
\end{remark}

Our motivation to prove Theorem \ref{lagrananaliticidad} stems from its applications to the null-controllability of parabolic evolutions with bounded controls acting over measurable sets of positive measure. 
The main tool used to establish null-controllability properties of parabolic evolutions are the observability inequalities from which the null-controllability follows by duality arguments \cite{DRu,Lions1}. The reasonings in \cite{AE,AEWZ,EMZ,PW1,W1,C1} make it now clear, that after Theorem \ref{lagrananaliticidad} is established, most of the results in those works can now be extended to parabolic evolutions with {\it time-dependent} coefficients and for {\it general} measurable sets with positive measure. We remark that only \cite{PW1} and \cite{PWZ} have dealt with some operators with {\it time-dependent} coefficients and measurable control regions but only for the special case of $\partial_t-\Delta+c(x,t)$, with $c$ bounded in $\R^{n+1}$ and for control regions of the form $\omega\times E$, with $\omega\subset\Omega$ an open set and $E\subset [0,T]$ a measurable set. In particular, \cite[Theorem 1 and \S 5]{EMZ} and Theorem \ref{lagrananaliticidad} imply Theorem \ref{th:observability1}, where $\mathcal L^\ast$ is the adjoint operator of $\mathcal L$. 
\begin{theorem}\label{th:observability1}
Let $0<T\le 1$, $\Omega$ be a bounded open set in $\Rn$ with analytic boundary, $\mathcal{D}\subset\Omega\times(0,T)$ be a measurable set with positive measure and $\mathcal L$ satisfy \eqref{coeff:space_analytic} over $\overline{\Omega}\times[0,1]$. Then, there is $N=N(\Omega,T,\mathcal{D},\varrho)$ such that the inequality
\begin{equation*}
\|\varphi(0)\|_{L^2(\Omega)}\leq
N\|\varphi\|_{L^1(\mathcal{D})}
\end{equation*}
holds for all $\varphi$ satisfying
\begin{equation*}
\begin{cases}
-\p_t \varphi+\mathcal L^\ast\varphi=0,\ &\text{in}\ \Omega\times [0,T),\\
\varphi=D\varphi=\ldots=D^{m-1}\varphi=0,\ &\text{in}\ \p\Omega \times [0,T),\\
\varphi(T)=\varphi_T,\ &\text{in}\ \Omega,
\end{cases}
\end{equation*}
with $\varphi_T$ in $L^2(\Omega)$. For each $u_0$ in $L^2(\Omega)$, there is $f$ in $L^\infty(\mathcal D)$ with
\begin{equation*}
\|f\|_{L^\infty(\mathcal D)}\le N\|u_0\|_{L^2(\Omega)},
\end{equation*}
such that the solution to
\begin{equation*}
\begin{cases}
\p_t u+\mathcal Lu=f\chi_\mathcal D,\ &\text{in}\ \Omega\times (0,T],\\
u=Du=\ldots=D^{m-1}u=0,\ &\text{in}\ \p\Omega \times (0,T],\\
u(0)=u_0,\ &\text{in}\ \Omega,
\end{cases}
\end{equation*}
satisfies $u(T)\equiv 0$. Also, the control $f$ with minimal $L^\infty(\mathcal D)$-norm  is unique and has the bang-bang property; i.e., $|f(x,t)|=$ const. for a.e. $(x,t)$ in $\mathcal D$. 
\end{theorem}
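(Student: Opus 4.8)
\textbf{Proof plan for Theorem~\ref{th:observability1}.}

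The plan is to deduce Theorem~\ref{th:observability1} from the analyticity estimate of Theorem~\ref{lagrananaliticidad} by running the measurable-set observability scheme of \cite{AE,AEWZ,EMZ} for the adjoint equation, and then to obtain the null-controllability, the uniqueness of the minimal-norm control and the bang-bang property from the usual duality and convexity arguments \cite{DRu,Lions1,W1}. First I would record that $\mathcal L^\ast$ inherits the hypotheses: after integrating by parts, $\mathcal L^\ast$ is again an operator of order $2m$ of the form \eqref{E: eltipodeoperadore} whose coefficients are finite linear combinations of the $\p_x^\beta a_\alpha$ with $|\beta|\le 2m$, so \eqref{parabolicidad} and \eqref{coeff:space_analytic} hold for $\mathcal L^\ast$ with a possibly smaller structural constant $\varrho'=\varrho'(\varrho,m,n)$. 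Reversing time, $\psi(x,t):=\varphi(x,T-t)$ extends (by solving the forward problem past $t=T$, which only costs a factor depending on $T$ and $\varrho$) to a solution of a problem of the type \eqref{problemaparabolico} on $\Omega\times(0,1]$, and Theorem~\ref{lagrananaliticidad} applied to $\psi$ --- with $x_0$ ranging over $\overline{\Omega}$ and $R$ comparable to $\min\{1,\operatorname{diam}\Omega\}$ and to the analyticity radius of $\p\Omega$ --- shows that for some $\rho=\rho(\varrho,m,n,\Omega)>0$ and every $t\in(0,T)$, the function $\varphi(\cdot,t)$ extends holomorphically to a fixed complex neighbourhood of $\overline{\Omega}$ of width $\sim\rho$, with
\begin{equation*}
|\p_x^\alpha\varphi(x,t)|\le e^{1/(\rho\,(T-t)^{1/(2m-1)})}\,\rho^{-1-|\alpha|}\,|\alpha|!\,\|\varphi\|_{L^2(\Omega\times(0,T))},\qquad x\in\overline{\Omega}.
\end{equation*}
The decisive point, as emphasised in the Introduction, is that the lower bound for the radius of analyticity of $\varphi(\cdot,t)$ is \emph{independent of $t$}; only the multiplicative constant degenerates, like $e^{1/(\rho(T-t)^{1/(2m-1)})}$, as $t\uparrow T$.

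Next I would combine this with the two ingredients of \cite[\S5]{EMZ}. For a suitable $\kappa=\kappa(\mathcal D)>0$ the set $F:=\{t\in(0,T):|\mathcal D_t|\ge\kappa\}$, where $\mathcal D_t:=\{x:(x,t)\in\mathcal D\}$, has positive Lebesgue measure by Fubini's theorem. For $t\in F$, the uniform-in-$t$ spatial analyticity of $\varphi(\cdot,t)$ together with the quantitative propagation of smallness for real-analytic functions from sets of positive measure (as in \cite{AEWZ}) yields an interpolation inequality of the form
\begin{equation*}
\|\varphi(\cdot,t)\|_{L^2(\Omega)}\le\Big(N_1\,e^{1/(\rho\,(T-t)^{1/(2m-1)})}\,\|\varphi\|_{L^2(\Omega\times(0,T))}\Big)^{1-\mu}\Big(\int_{\mathcal D_t}|\varphi(x,t)|\,dx\Big)^{\mu},
\end{equation*}
with $N_1\ge1$ and $\mu\in(0,1)$ depending only on $\Omega$, $\kappa$ and $\varrho$. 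Combining this with the parabolic energy estimates for \eqref{problemaparabolico} --- which provide the quasi-monotonicity of $t\mapsto\|\varphi(\cdot,t)\|_{L^2(\Omega)}$ and control $\|\varphi(0)\|_{L^2(\Omega)}$ by $\|\varphi(\cdot,t)\|_{L^2(\Omega)}$ for every $t\in[0,T]$ --- with the sharp quantitative interpolation inequality for solutions of the parabolic equation across time slices coming from Carleman estimates, and summing a telescoping series over a sequence of intervals shrinking to a Lebesgue density point of $F$, exactly as in \cite[\S5]{EMZ} and \cite{AEWZ}, one reaches the observability inequality $\|\varphi(0)\|_{L^2(\Omega)}\le N\|\varphi\|_{L^1(\mathcal D)}$ with $N=N(\Omega,T,\mathcal D,\varrho)$.

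Once the observability inequality is available, the control statements follow by duality. Assuming $u_0\neq0$ (the case $u_0=0$ being trivial), I would minimise over $L^2(\Omega)$ the functional $J(\varphi_T)=\tfrac12\|\varphi\|_{L^1(\mathcal D)}^2+(u_0,\varphi(0))_{L^2(\Omega)}$, $\varphi$ denoting the adjoint solution with final datum $\varphi_T$; the observability inequality makes $J$ continuous, coercive and strictly convex (in the standard sense; see \cite{W1,EMZ}), so it has a unique minimiser $\varphi_T^\ast$, and $f:=-\|\varphi^\ast\|_{L^1(\mathcal D)}\,\sgn\varphi^\ast$ on $\mathcal D$ satisfies, by the Euler--Lagrange identity for $J$ and the duality pairing between the two evolutions, $u(T)\equiv0$ and $\|f\|_{L^\infty(\mathcal D)}=\|\varphi^\ast\|_{L^1(\mathcal D)}\le 2N\|u_0\|_{L^2(\Omega)}$ (using $J(\varphi_T^\ast)\le J(0)=0$). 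For the bang-bang property one needs $\varphi^\ast\neq0$ a.e.\ on $\mathcal D$: since $u_0\neq0$, backward uniqueness gives $\varphi^\ast\not\equiv0$, hence $\varphi^\ast(\cdot,t)\not\equiv0$ for every $t\in(0,T]$, and as $\varphi^\ast(\cdot,t)$ is real-analytic in $\Omega$ for $t\in(0,T)$ by Theorem~\ref{lagrananaliticidad}, its zero set is Lebesgue-null in $\Omega$ for each such $t$, so by Fubini $\{\varphi^\ast=0\}\cap\mathcal D$ is null and $|f|=\|\varphi^\ast\|_{L^1(\mathcal D)}$ a.e.\ on $\mathcal D$. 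Uniqueness follows because any null-control $g\in L^\infty(\mathcal D)$ with $\|g\|_{L^\infty(\mathcal D)}\le\|f\|_{L^\infty(\mathcal D)}$ must satisfy $\int_{\mathcal D}g\,\varphi^\ast=\int_{\mathcal D}f\,\varphi^\ast=-\|f\|_{L^\infty(\mathcal D)}\|\varphi^\ast\|_{L^1(\mathcal D)}$, an equality case in H\"older's inequality which together with $\varphi^\ast\neq0$ a.e.\ forces $g=f$ a.e.\ on $\mathcal D$.

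The genuinely delicate step is the middle one: converting the pointwise analyticity of Theorem~\ref{lagrananaliticidad} into the $L^1$ observability inequality over a general, non-product measurable set $\mathcal D$ requires interlacing the \emph{spatial} propagation of smallness on good time slices with a \emph{temporal} telescoping that absorbs the exponential loss $e^{1/(\rho(T-t)^{1/(2m-1)})}$ and relies on sharp parabolic interpolation across time slices and on the energy estimates. Since all of this is already carried out in \cite{AE,AEWZ,EMZ} and Theorem~\ref{lagrananaliticidad} is the technical heart, the proof of Theorem~\ref{th:observability1} reduces essentially to verifying that the hypotheses of that scheme --- chiefly the analyticity estimate \eqref{estimatescontrol}, now furnished for time-dependent coefficients by Theorem~\ref{lagrananaliticidad} --- are in place, together with the observation that $\mathcal L^\ast$ inherits \eqref{parabolicidad} and \eqref{coeff:space_analytic}.
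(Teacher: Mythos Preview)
Your overall architecture --- analyticity from Theorem~\ref{lagrananaliticidad}, propagation of smallness from measurable sets, telescoping series, then duality for the control and bang-bang statements --- is the same scheme the paper invokes via \cite[Theorem~1 and \S5]{EMZ}, and your treatment of the duality/bang-bang part is fine.

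The gap is in how you handle the \emph{time} direction. You extract from Theorem~\ref{lagrananaliticidad} only the spatial analyticity (the $p=0$ case) and then appeal to ``the sharp quantitative interpolation inequality \dots coming from Carleman estimates'' to pass between time slices. For operators of order $2m$ with $m>1$ such Carleman-based interpolation/observability inequalities are \emph{not} available; their absence is precisely why the analyticity route is needed. The paper's mechanism (displayed in Section~\ref{main_observability} for the second-order case, and identical in \cite{EMZ}) uses the \emph{full} space-time bound of Theorem~\ref{lagrananaliticidad}: on each telescoping subinterval one has
\[
|\p_x^\alpha\p_t^p u(x,t)|\le e^{N/(l_k-l_{k+1})}\rho^{-1-|\alpha|-p}R^{-|\alpha|}(l_k-l_{k+1})^{-p}|\alpha|!\,p!\,\|u(l_{k+1})\|_{L^2(\Omega)},
\]
for $(x,t)\in B_R(x_0)\times[\tau_k,l_k]$, so that $t\mapsto u(x,t)$ is analytic with radius comparable to $l_k-l_{k+1}$. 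One then applies Lemma~\ref{propagation} \emph{twice}: first in the time variable over the measurable set $E\cap(l_{k+1},l_k)$, and then in the space variable over $\mathcal D_t$. This, together with a starting open-cylinder observability inequality (supplied by \cite[Theorem~1]{EMZ} for general $m$; Carleman estimates only enter in the second-order Theorems~\ref{T:otroteoremilla}--\ref{T:otroteoremilla2}), yields the interpolation
\[
\|u(l_k)\|_{L^2(\Omega)}\le\Big(Ne^{N/(l_k-l_{k+1})}\!\int_{E\cap(l_{k+1},l_k)}\|u(t)\|_{L^1(\mathcal D_t)}\,dt\Big)^{\theta}\|u(l_{k+1})\|_{L^2(\Omega)}^{1-\theta},
\]
which is what telescopes. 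Note also that the right-hand side must involve $\|u(l_{k+1})\|_{L^2(\Omega)}$ at a \emph{specific} time slice, not $\|\varphi\|_{L^2(\Omega\times(0,T))}$ as you wrote; otherwise the series does not collapse. Replace your spatial-only bound and the Carleman appeal by the two consecutive applications of Lemma~\ref{propagation} coming from the $\p_t^p$ estimates in Theorem~\ref{lagrananaliticidad}.
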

\begin{remark}\label{R:2}
When $\mathcal{D}=\omega\times(0,T)$, the constant in Theorem \ref{th:observability1} is of the form $e^{C/T^{1/(2m-1)}}$, with $C=C(\Omega,|\omega|,\varrho)$. 
\end{remark}
\begin{remark}
The proof of Theorem \ref{th:observability1} is the same as in \cite[Theorem 1 and \S 5]{EMZ} and requires energy estimates; i.e.,  we need to be able to solve the initial value problem
\begin{equation*}\label{problemaparabolico2}
\begin{cases}
\p_t u+\mathcal Lu=0,\ &\text{in}\ \Omega\times (0,1],\\
u=Du=\ldots=D^{m-1}u=0,\ &\text{in}\ \p\Omega \times (0,1],\\
u(0)=u_0,\ \text{in}\ L^2(\Omega),
\end{cases}
\end{equation*}
with data $u_0$ in $L^2(\Omega)$ and with a unique solution $u$ in the energy class $$C([0,1],L^2(\Omega))\cap L^2([0,1],H^m_0(\Omega)).$$
To make sure that such energy estimates and uniqueness of solutions hold in the later class, we recall that operators $\mathcal L$ as in \eqref{E: eltipodeoperadore}, which satisfy the conditions in Theorem \ref{th:observability1}, can always be written in variational form as
\begin{equation}\label{E: selfadjoint}
\sum_{|\alpha|\le 2m} a_\alpha(x,t)\partial^{\alpha}_x=\sum_{|\gamma|,|\beta|\le m}\partial^\gamma_x\left(A_{\gamma\beta}(x,t)\partial^\beta_x\ \right),
\end{equation}
with
\begin{equation}\label{parabolicidad1}
\begin{aligned}
&\sum_{|\gamma|=|\beta|=m} A_{\gamma\beta}(x,t)\xi^{\gamma}\xi^{\beta}\ge \varrho |\xi|^{2m},\ \text{for}\ \xi\in\R^n,\ (x,t)\in \Omega\times[0,1],\\
&\sum_{|\gamma|,|\beta|\le m}\|A_{\gamma\beta}\|_{L^\infty(\Omega\times [0,1])}\le\varrho^{-1},
\end{aligned}
\end{equation} 
for some possibly smaller $\varrho>0$. The later is claimed without a proof in \cite[p. 32]{Friedman1}. For the convenience of the reader we add its proof at the end of the Appendix.
\end{remark}
Similar results on boundary null-controllability over measurable sets with positive measure can be stated for higher order {\it time-dependent} parabolic evolutions, under the same global analyticity conditions as in Theorem \ref{th:observability1}.  This follows follows from Theorem \ref{lagrananaliticidad} and the reasonings in \cite[Theorem 2 and \S 5]{EMZ}.

For second order parabolic equations the last results hold with less global regularity assumptions on the coefficients and of the boundary of $\Omega$. In particular, from Theorem \ref{lagrananaliticidad}, \cite{FursikovOImanuvilov,ImanuvilovYamamoto2} and the telescoping series method one can get the following results for {\it time-dependent} second order parabolic equations 
\begin{equation*}
\partial_t-\nabla\cdot\left(\mathbf A(x,t)\nabla\ \right)+\mathbf b_1(x,t)\cdot\nabla+ \nabla\cdot\left(\mathbf b_2(x,t)\  \right)+c(x,t),
\end{equation*}
verifying
\begin{equation}\label{E:1}
\begin{split}
&\varrho \mathbf I\le \mathbf A\le\varrho^{-1}\mathbf I,\ \text{in}\ \Omega\times [0,1]\\
&\|\nabla_{x,t}\mathbf A\|_{L^\infty(\Omega\times [0,1])}+\max_{i=1,2}\|\mathbf b_i\|_{L^\infty(\Omega\times [0,1])} +\|c\|_{L^\infty(\Omega\times [0,1])}\le\varrho^{-1}.
\end{split}
\end{equation}
\begin{theorem}\label{T:otroteoremilla}
Let $0<T\le 1$, $\mathcal D\subset B_R(x_0)\times (0,T)$ be a measurable set with positive measure, $\Omega$ be a bounded  $C^{1,1}$ domain, $B_{2R}(x_0)\subset\Omega$, $\mathbf A$, $\mathbf b_i$, $i=1,2$ and $c$ also satisfy \eqref{coeff:space_analytic} over $B_{2R}(x_0)\times [0,1]$ and \eqref{coeff:time_analytic}. Then, there is $N=N(\Omega, T,\mathcal D, \varrho)$ such that the inequality
\begin{equation*}
\|\varphi(0)\|_{L^2(\Omega)}\le N\|\varphi\|_{L^1(\mathcal D)},
\end{equation*}
holds for all $\varphi$ satisfying
\begin{equation*}
\begin{cases}
-\p_t\varphi-\nabla\cdot\left(\mathbf A\nabla \varphi\right)-\nabla\cdot\left(\mathbf b_1\varphi\right)-\mathbf b_2\nabla\cdot\varphi+c\varphi=0,\ &\text{in}\ \Omega\times [0,T),\\
\varphi=0,\ &\text{in}\ \p\Omega \times [0,T],\\
\varphi(T)=\varphi_T,\ &\text{in}\ \Omega,
\end{cases}
\end{equation*}
for some $\varphi_T$ in $L^2(\Omega)$. For each $u_0$ in $L^2(\Omega)$, there is $f$ in $L^\infty(\mathcal D)$ with
\begin{equation*}
\|f\|_{L^\infty(\mathcal D)}\le N\|u_0\|_{L^2(\Omega)},
\end{equation*}
 such that the solution to
\begin{equation*}
\begin{cases}
\p_tu-\nabla\cdot\left(\mathbf A\nabla u\right)+\mathbf b_1\cdot\nabla u+\nabla\cdot\left(\mathbf b_2u\right)+cu=f\chi_{\mathcal D},\ &\text{in}\ \Omega\times (0,T],\\
u=0,\ &\text{in}\ \p\Omega \times [0,T],\\
u(0)=u_0,\ &\text{in}\ \Omega,
\end{cases}
\end{equation*}
satisfies $u(T)\equiv 0$. Also, the control $f$ with minimal $L^\infty(\mathcal D)$-norm  is unique and has the bang-bang property; i.e., $|f(x,t)|=$ const. for a.e. $(x,t)$ in $\mathcal D$. 
\end{theorem}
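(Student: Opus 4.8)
The plan is to derive Theorem \ref{T:otroteoremilla} from the analyticity estimate in Theorem \ref{lagrananaliticidad} combined with known Carleman-type interpolation inequalities for second-order parabolic equations and the telescoping series method, exactly as in the reasonings of \cite{AE,AEWZ,EMZ}. The overall scheme is classical: one first proves the observability inequality $\|\varphi(0)\|_{L^2(\Omega)}\le N\|\varphi\|_{L^1(\mathcal D)}$ for solutions of the adjoint equation, and then obtains the null-controllability with $L^\infty$ bang-bang controls by a duality/convex-analysis argument (minimization of a functional whose dual is the observability estimate).

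First I would fix notation and reduce the problem. By applying Theorem \ref{lagrananaliticidad} to the adjoint equation after the time reversal $t\mapsto T-t$, we obtain that $\varphi(\cdot,s)$ is real-analytic in $x$ on $B_{R/2}(x_0)$ with a radius of analyticity bounded below independently of $s\in(0,T)$, and with the Gevrey-type control in time encoded in \eqref{todanaliticidad}. The key consequence is a \emph{spatial} analyticity estimate of the form $|\p_x^\alpha\varphi(x,s)|\le e^{C/s^{1/(2m-1)}}\rho^{-|\alpha|}\alpha!\,\|\varphi\|_{L^2(\Omega\times(0,T))}$ uniformly for $x\in B_{R/2}(x_0)$. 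This is what allows one to pass from an estimate on a small ball to an estimate on a measurable subset of positive measure: if a real-analytic function on a ball is controlled on a set $E$ of positive measure, one controls it on a fixed sub-ball, using a Remez/Turán-type inequality or the propagation of smallness for analytic functions (this is the argument in \cite[\S 5]{EMZ}, \cite{AEWZ}).

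Next I would invoke the quantitative propagation of smallness / interpolation inequalities for second-order parabolic equations with $C^{1}$ (or Lipschitz) leading coefficients from \cite{FursikovOImanuvilov,ImanuvilovYamamoto2}: there is a spectral/Carleman inequality giving, for the adjoint solution, a bound of the form
\begin{equation*}
\|\varphi(0)\|_{L^2(\Omega)}\le e^{C/(t_2-t_1)^{\sigma}}\,\|\varphi\|_{L^1(\omega\times(t_1,t_2))}^{\theta}\,\|\varphi(0)\|_{L^2(\Omega)}^{1-\theta}
\end{equation*}
over small time windows, for any nonempty open $\omega\subset B_R(x_0)$. Combining this with the analytic estimate above (to replace $\omega$ by the measurable set $\mathcal D\cap(\Omega\times(t_1,t_2))$ at the price of a controlled constant), absorbing the $\|\varphi(0)\|^{1-\theta}$ factor, and then running the \emph{telescoping series method} of \cite{AEWZ} over a sequence of time intervals $(t_{k+1},t_k)$ shrinking to $0$, yields the observability inequality $\|\varphi(0)\|_{L^2(\Omega)}\le N\|\varphi\|_{L^1(\mathcal D)}$ with $N=N(\Omega,T,\mathcal D,\varrho)$; the $\p_t-\Delta$ energy decay is what makes the telescoping sum converge. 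Finally, the null-controllability with a unique bang-bang control of minimal $L^\infty(\mathcal D)$-norm follows by the standard duality argument: one minimizes $J(\varphi_T)=\tfrac12\|\varphi\|_{L^1(\mathcal D)}^2+\langle u_0,\varphi(0)\rangle$ (coercive thanks to observability), and the Euler--Lagrange equation produces $f=-\operatorname{sgn}(\varphi)\,\|\varphi\|_{L^1(\mathcal D)}$ on $\mathcal D$; uniqueness and the bang-bang property come from strict convexity together with the spatial analyticity, which forces $\{\varphi=0\}\cap\mathcal D$ to have zero measure at a.e.\ time.

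The main obstacle, and the only genuinely new input, is ensuring that Theorem \ref{lagrananaliticidad} applies in the reduced-regularity setting of Theorem \ref{T:otroteoremilla} (merely $C^{1,1}$ domain, Lipschitz leading coefficients satisfying \eqref{E:1} and \eqref{coeff:space_analytic}--\eqref{coeff:time_analytic} only on $B_{2R}(x_0)$) and that the interior analyticity on $B_{R/2}(x_0)$ is enough: since $\mathcal D\subset B_R(x_0)\times(0,T)$ with $B_{2R}(x_0)\subset\Omega$, all analytic estimates are needed only in the interior, so Remark \ref{R:1} (the interior version with only spatial analyticity of the coefficients, and local hypotheses) is precisely what is invoked, and the global low regularity of $\partial\Omega$ enters only through the standard parabolic Carleman estimates of \cite{FursikovOImanuvilov}, which hold for $C^{1,1}$ (indeed Lipschitz) domains. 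Everything else is a verbatim repetition of \cite[Theorem 1 and \S 5]{EMZ} and \cite{AEWZ}, so the proof will consist mainly in checking that these ingredients dovetail and in tracking the dependence of constants.
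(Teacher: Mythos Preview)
Your outline is essentially the paper's own proof: Carleman observability from \cite{FursikovOImanuvilov,ImanuvilovYamamoto2}, the interior analyticity of Theorem~\ref{lagrananaliticidad}, propagation of smallness (Lemma~\ref{propagation}) to pass from cylinders to the measurable set, the telescoping series over a sequence $l_k\searrow l$ built from a Lebesgue point of the good-time set $E$, and then the duality argument of \cite[\S 5]{AEWZ,EMZ} for the bang-bang control.

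Two points of your write-up need correction, however. First, the input from \cite{FursikovOImanuvilov,ImanuvilovYamamoto2} is the \emph{pure} observability \eqref{coeff:space_analytic2}, $\|u(l_k)\|_{L^2(\Omega)}\le Ne^{N/(l_k-\tau_k)}\|u\|_{L^2(B_R(x_0)\times(\tau_k,l_k))}$, not an interpolation inequality; the interpolated form $\|u(l_k)\|\le(\cdots)^\theta\|u(l_{k+1})\|^{1-\theta}$ appears only after combining this with \emph{two} applications of Lemma~\ref{propagation} --- first in the time variable, then in the space variables --- using the full space-time analyticity \eqref{todanaliticidad}. Your plan singles out spatial analyticity, but the time application of Lemma~\ref{propagation} is what lets you replace the full interval $(\tau_k,l_k)$ by the measurable set $E\cap(l_{k+1},l_k)$, so you do need the $p\ge 1$ part of Theorem~\ref{lagrananaliticidad}. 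Second, since both \eqref{coeff:space_analytic} and \eqref{coeff:time_analytic} are assumed in Theorem~\ref{T:otroteoremilla} and $B_{2R}(x_0)\subset\Omega$, it is Theorem~\ref{lagrananaliticidad} itself (interior case, $m=1$, $C^{1,1}$ boundary suffices for the global Schauder estimate \eqref{E: Schauder estimate}) that applies; Remark~\ref{R:1} is reserved for the weaker hypotheses of Remark~\ref{R: elultimo}.
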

\begin{remark}\label{R: elultimo}
Theorem \ref{T:otroteoremilla} also holds when $\mathbf b_1\equiv 0$, \eqref{E:1} holds and $\mathbf A$, $\mathbf b_2$ and $c$ are only analytic with respect to the space-variables over $B_{2R}(x_0)\times [0,1]$. This follows from Remark  \ref{R:1} and the reasonings in \cite{PW1,PWZ,EFV}. We outline the proof  of this result in Section \ref{main_observability}.
\end{remark}
\begin{theorem}\label{T:otroteoremilla2}
Let $\Omega$ and $T$ be as above, $\mathcal J\subset\triangle_R(q_0)\times (0,T)$ be a measurable set with positive measure, $q_0\in\partial\Omega$, $\triangle_{2R}(q_0)$ be analytic, $\mathbf A$, $\mathbf b_i$, $i=1,2$ and $c$ also satisfy \eqref{coeff:space_analytic} over $B_{2R}(q_0)\cap\overline\Omega\times [0,1]$ and \eqref{coeff:time_analytic}. Then, there is $N=N(\Omega, T,\mathcal J,\varrho)$ such that the inequality
\begin{equation*}
\|\varphi(0)\|_{L^2(\Omega)}\le N\|\mathbf A\nabla\varphi\cdot\nu\|_{L^1(\mathcal J)},
\end{equation*}
holds for all $\varphi$ satisfying
\begin{equation*}
\begin{cases}
-\p_t\varphi-\nabla\cdot\left(\mathbf A\nabla \varphi\right)-\nabla\cdot\left(\mathbf b_1\varphi\right)-\mathbf b_2\nabla\cdot\varphi+c\varphi=0,\ &\text{in}\ \Omega\times [0,T),\\
\varphi=0,\ &\text{in}\ \p\Omega \times [0,T],\\
\varphi(T)=\varphi_T,\ &\text{in}\ \Omega,
\end{cases}
\end{equation*}
for some $\varphi_T$ in $L^2(\Omega)$. For each $u_0$ in $L^2(\Omega)$, there is $g$ in $L^\infty(\mathcal J)$ with
\begin{equation*}
\|g\|_{L^\infty(\mathcal J)}\le N\|u_0\|_{L^2(\Omega)},
\end{equation*}
 such that the solution to
\begin{equation*}
\begin{cases}
\p_tu-\nabla\cdot\left(\mathbf A\nabla u\right)+\mathbf b_1\cdot\nabla u+\nabla\cdot\left(\mathbf b_2u\right)+cu=0,\ &\text{in}\ \Omega\times (0,T],\\
u=g\chi_\mathcal{J},\ &\text{in}\ \p\Omega \times [0,T],\\
u(0)=u_0,\ &\text{in}\ \Omega,
\end{cases}
\end{equation*}
satisfies $u(T)\equiv 0$. Also, the control $g$ with minimal $L^\infty(\mathcal J)$-norm  is unique and has the bang-bang property; i.e., $|g(q,t)|=$ const. for a.e. $(q,t)$ in $\mathcal J$. 
\end{theorem}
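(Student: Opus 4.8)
The statement is the boundary counterpart of Theorem~\ref{T:otroteoremilla}, and the plan is to prove it in the same way: first establish the observability inequality
\[
\|\varphi(0)\|_{L^2(\Omega)}\le N\,\|\mathbf A\nabla\varphi\cdot\nu\|_{L^1(\mathcal J)},
\]
and then deduce the null-controllability together with the existence, uniqueness and bang-bang property of the $L^\infty(\mathcal J)$-minimal norm control $g$ by the by-now standard duality and convex-analysis arguments of \cite[\S5]{EMZ} and \cite{PW1,DRu,Lions1}: minimize a functional quadratic in the $L^1(\mathcal J)$-observation over final data $\varphi_T$, use the observability inequality for coercivity, and read off from the Euler--Lagrange relation that the minimizing control is $g=\text{const}\cdot\sgn(\mathbf A\nabla\varphi\cdot\nu)$ on $\mathcal J$ for the minimizing adjoint state $\varphi$. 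The bang-bang property and uniqueness then follow because, by Step~1 below, the conormal trace $\mathbf A\nabla\varphi\cdot\nu$ of any nonzero adjoint solution is real-analytic on $\triangle_R(q_0)\times(0,T)$ and hence cannot vanish on a subset of $\mathcal J$ of positive measure. So everything reduces to the observability inequality.

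\emph{Step 1: analyticity of the conormal trace up to the boundary.} After the change $t\mapsto T-t$ the adjoint equation becomes a forward parabolic equation of the form \eqref{problemaparabolico} with $m=1$, whose coefficients still satisfy \eqref{coeff:space_analytic} over $B_{2R}(q_0)\cap\overline\Omega\times[0,1]$ and \eqref{coeff:time_analytic}, and with $\triangle_{2R}(q_0)$ analytic; Theorem~\ref{lagrananaliticidad} (with $x_0=q_0$ and radius $2R$) then gives that $\varphi$ is real-analytic on $B_R(q_0)\cap\overline\Omega\times(0,T)$ with bounds
\[
|\partial_x^\alpha\partial_t^p\varphi(x,t)|\le e^{1/\rho(T-t)}\,\rho^{-1-|\alpha|-p}R^{-|\alpha|}(T-t)^{-p}\,(|\alpha|+p)!\,\|\varphi\|_{L^2(\Omega\times(0,T))}.
\]
In particular $\mathbf A\nabla\varphi\cdot\nu$, a finite combination of traces on $\triangle_R(q_0)$ of analytic functions, is real-analytic on $\triangle_R(q_0)\times(0,T)$, with a lower bound on the radius of convergence of its spatial Taylor series that is \emph{uniform in $t\in(0,T)$} (this uniformity is precisely the feature of \eqref{todanaliticidad} that \eqref{E: kimalito} lacks and that makes the measure-theoretic step work). \emph{Step 2: propagation of smallness up to $t=0$.} Fixing an open set $\omega'\Subset\triangle_R(q_0)$, the boundary Carleman estimates and quantitative unique continuation from the boundary of \cite{FursikovOImanuvilov,ImanuvilovYamamoto2}, together with the backward $L^2$ energy estimate for $\varphi$, yield for every $0<\ell<T$ an interpolation inequality of spectral type with explicit constant
\[
\|\varphi(0)\|_{L^2(\Omega)}\le e^{C/\ell}\,\|\mathbf A\nabla\varphi\cdot\nu\|_{L^2(\omega'\times(T-\ell,T))}^{\theta}\,\|\varphi(0)\|_{L^2(\Omega)}^{1-\theta},
\]
for some $\theta=\theta(\Omega,\omega',\varrho)\in(0,1)$; iterating this along a sequence of time slivers shrinking geometrically to a Lebesgue density point of the time projection of $\mathcal J$ — the telescoping series method of \cite{PW1,PWZ,AEWZ,EMZ} — upgrades the observation interval to an arbitrary measurable subset of $(0,T)$ of positive measure, the $e^{C/\ell}$ growth being absorbed by the geometric decay of the sliver lengths.

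\emph{Step 3: from $\omega'$ to the measurable set $\mathcal J$.} Finally one replaces the spatial region $\omega'$ by $\mathcal J$ using the analyticity of Step~1. At (a.e.) density point $(q^\ast,t^\ast)$ of $\mathcal J$, the spatial analyticity bound and a Remez-type inequality for real-analytic functions on $\R^{n-1}$ (as in \cite{AEWZ,PW1,EMZ}) give, on time slices $\mathcal J_t=\{q:(q,t)\in\mathcal J\}$ of positive measure,
\[
\|\mathbf A\nabla\varphi\cdot\nu\|_{L^\infty(\triangle_r(q^\ast))}\le\Bigl(\tfrac{C\,r^{n-1}}{|\mathcal J_t\cap\triangle_r(q^\ast)|}\Bigr)^{C}\,\|\mathbf A\nabla\varphi\cdot\nu\|_{L^1(\mathcal J_t\cap\triangle_r(q^\ast))},
\]
and combining this with Steps~1--2, Fubini's theorem and Hölder's inequality produces the claimed $L^1(\mathcal J)$ observability; this is the measure-theoretic mechanism of \cite[\S5]{EMZ}.

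The step I expect to be the main obstacle is Step~2: one needs a \emph{quantitative} boundary interpolation inequality valid for \emph{time-dependent} analytic coefficients, with the sharp $e^{C/\ell}$ dependence on the sliver length $\ell$, formulated for the conormal derivative trace on an analytic boundary piece and robust enough to be iterated so that its constant is reconciled with the $e^{1/\rho(T-t)}$ weight of Theorem~\ref{lagrananaliticidad}; all the ingredients are available in \cite{FursikovOImanuvilov,ImanuvilovYamamoto2,AEWZ,EMZ}, but assembling them with the correct quantitative bookkeeping is the delicate point. The hypothesis $\mathbf b_1\not\equiv0$ is used precisely because the backward equation is then genuinely non-self-adjoint, so one really needs the full space-\emph{and}-time analyticity of Theorem~\ref{lagrananaliticidad}; when $\mathbf b_1\equiv0$ one can symmetrize the operator and relax the requirement to spatial analyticity only, which is the content of Remark~\ref{R: elultimo} (via Remark~\ref{R:1}).
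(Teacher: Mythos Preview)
Your plan uses exactly the paper's ingredients: the boundary observability estimate \eqref{coeff:time_analytic2} from \cite{FursikovOImanuvilov,ImanuvilovYamamoto2}, Theorem~\ref{lagrananaliticidad} applied at $x_0=q_0$ to make $\mathbf A\nabla u\cdot\nu$ real-analytic on $\triangle_R(q_0)\times(0,T)$ with a time-uniform spatial radius, the propagation Lemma~\ref{propagation} (in its obvious analytic-hypersurface version), and the telescoping series over a geometric sequence $l_k\searrow l$ at a Lebesgue point of $E=\{t:|\mathcal J_t|\ge|\mathcal J|/(2T)\}$. The duality and bang-bang parts are, as you say, standard once the $L^1(\mathcal J)$ observability is in hand.

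Two points of assembly differ from the paper and, as you have written them, would not close. First, the displayed inequality in your Step~2 has $\|\varphi(0)\|_{L^2(\Omega)}$ on \emph{both} sides; such an estimate cannot be iterated into a telescoping series. What is actually needed on each sliver $(l_{k+1},l_k)$ is a \emph{two-time} inequality
\[
\|u(l_k)\|_{L^2(\Omega)}\le\Bigl(N e^{N/(l_k-l_{k+1})}\!\int_{E\cap(l_{k+1},l_k)}\!\|\mathbf A\nabla u(t)\cdot\nu\|_{L^1(\mathcal J_t)}\,dt\Bigr)^{\theta}\|u(l_{k+1})\|_{L^2(\Omega)}^{1-\theta},
\]
which then telescopes after the choice $\epsilon=e^{-1/(l_k-l_{k+1})}$ and $z=(N+1)/(N+1-\theta)$. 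Second, this two-time form arises because Lemma~\ref{propagation} is applied \emph{inside} each sliver --- once in $t$ and once in the surface variable --- with the constant $M$ in Lemma~\ref{propagation} supplied by the analyticity bound \eqref{todanaliticidad}, which is controlled by $e^{N/(l_k-l_{k+1})}\|u(l_{k+1})\|_{L^2(\Omega)}$. Your ordering (telescope first with an open patch $\omega'$, then apply the Remez step slice-by-slice) leaves an unabsorbed factor $M(t)^{1-\theta}$ after the telescoping is already done; it can be rescued, but only by rerunning essentially the same two-time argument, so the paper's order is the clean one.

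Finally, there is no hypothesis ``$\mathbf b_1\not\equiv 0$'' in the statement. The point you have in mind is Remark~\ref{R: elultimo} for Theorem~\ref{T:otroteoremilla}: when $\mathbf b_1\equiv 0$ one may drop \eqref{coeff:time_analytic} and use only the spatial analyticity of Remark~\ref{R:1} together with the interpolation inequality \eqref{E:un jiodpasomas3}; it is a relaxation available in that special case, not an assumption being made here.
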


As in \cite{AE,AEWZ,EMZ}, the main tools to derive these results are Theorem \ref{lagrananaliticidad}, the telescoping series method \cite{Miller2} and  Lemma \ref{propagation} below.  Lemma \ref{propagation} was first derived in \cite{Vessella}. See also \cite{Nadirashvili2} and \cite{Nadirashvili} for close results. The reader can find a simpler proof of  Lemma \ref{propagation} in \cite[\S 3]{AE}. The proof there is  built with ideas from \cite{Malinnikova}, \cite{Nadirashvili2} and  \cite{Vessella}.
\begin{lemma}\label{propagation}
Let $\omega\subset B_R$ be a measurable set, $|\omega |\ge\varrho |B_R|$, $f$ be an analytic function in $B_{R}$ and assume there are $M>0$ and $0<\varrho\le 1$ such that 
\begin{equation*}
|\p_x^\alpha f(x)|\leq M(R\varrho)^{-|\alpha|}|\alpha|!,\ \text{when} \ x\in B_{R}\ \text{and}\ \alpha \in \N^n.
\end{equation*}
Then, there are $N=N(\varrho)$ and $\theta=\theta(\varrho)$, $0<\theta<1$, such that
\begin{equation*}
\|f\|_{L^\infty(B_R)}\leq NM^{1-\theta}\left(\frac{1}{|\omega|}\int_{\omega}|f|dx \right)^{\theta}.
\end{equation*}
\end{lemma}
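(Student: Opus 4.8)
The plan is to prove Lemma \ref{propagation} by reducing the $n$-dimensional statement to an essentially one-dimensional quantitative estimate along lines, combined with a covering/measure-theoretic argument to pass from the large set $\omega$ to a union of segments on which $f$ is controlled. First I would rescale: replacing $f(x)$ by $f(Rx)$ we may assume $R=1$, so that $|\partial_x^\alpha f(x)|\le M\varrho^{-|\alpha|}|\alpha|!$ on $B_1$ and $|\omega|\ge\varrho|B_1|$. The Taylor bound shows $f$ extends holomorphically to a fixed complex neighbourhood of $B_1$ of size comparable to $\varrho$, with $\sup$ bounded by $CM$; equivalently $f$ belongs to a Hardy-type class with a doubling/three-balls property. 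The quantity we want to bound below, $\frac1{|\omega|}\int_\omega|f|$, we rename $m$, and the goal is $\|f\|_{L^\infty(B_1)}\le NM^{1-\theta}m^\theta$.

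The core analytic input is a propagation-of-smallness estimate for holomorphic (or real-analytic) functions: if $|f|$ is small in $L^1$-average on a set of positive measure inside $B_{1/2}$, then $|f|$ is small (to a power) on all of $B_{1/2}$, hence on $B_1$ by a further three-balls step up to the boundary of the analyticity region. I would obtain this in two stages. Stage one: there is a set $\omega'\subset\omega$ still of measure $\ge c(\varrho)$ and a ball $B_r(x_*)\subset B_1$ with $r=r(\varrho)$ such that $\fint_{B_r(x_*)}|f|\le C m^{\theta_1}M^{1-\theta_1}$; this comes from Chebyshev together with the fact that on most of $\omega$, $|f|\lesssim m/\varrho$, and then upgrading an $L^1$-average-small condition on a positive-measure subset of a ball to an $L^1$-average-small condition on a slightly smaller concentric ball using the real-analyticity of $f$ (this is the instance-of-itself step, done by slicing $B_r$ into lines, applying a one-dimensional result of the type: a polynomial-like function small on a set of positive measure in an interval is small on the whole interval, Remez/Turán-type inequality, and integrating). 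Stage two: the Hadamard three-balls inequality for holomorphic functions, iterated along a chain of balls of fixed radius covering $\overline{B_1}$ inside the complex analyticity neighbourhood, propagates the smallness from $B_r(x_*)$ to $\|f\|_{L^\infty(B_1)}$, at the cost of replacing $\theta_1$ by a smaller $\theta=\theta(\varrho)$ and absorbing constants into $N(\varrho)$.

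I expect the main obstacle to be Stage one, the passage from ``$|f|$ has small $L^1$-average on a measurable set $\omega$ of positive measure'' to ``$|f|$ has small $L^p$-average on an honest ball'', because measurable sets of positive measure can be very irregular and the estimate must be uniform in $\omega$ subject only to $|\omega|\ge\varrho|B_1|$. The standard way around this is the slicing argument with a one-variable Remez-type inequality: for $x'$ in a large-measure subset of the projection of $\omega$ onto an $(n-1)$-plane, the slice $\{t:(x',t)\in\omega\}$ has positive one-dimensional measure, and along that slice $f$ is a real-analytic function of one variable with controlled Taylor coefficients, so a quantitative unique-continuation/Remez bound gives control on the full slice; Fubini then assembles these into the desired ball estimate. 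Making the exponents track correctly through this slicing — and ensuring the set of ``good'' slices has measure bounded below purely in terms of $\varrho$ — is the delicate bookkeeping. As the excerpt notes, a clean self-contained execution of exactly this scheme (drawing on \cite{Malinnikova}, \cite{Nadirashvili2}, \cite{Vessella}) is carried out in \cite[\S 3]{AE}, and I would follow that route, citing the one-dimensional Remez/Turán inequality and the Hadamard three-circles theorem as the two external ingredients.
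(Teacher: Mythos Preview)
The paper does not supply its own proof of this lemma: it records that the result is due to \cite{Vessella} (with related statements in \cite{Nadirashvili2,Nadirashvili}) and refers the reader to \cite[\S 3]{AE} for a simpler self-contained proof ``built with ideas from \cite{Malinnikova}, \cite{Nadirashvili2} and \cite{Vessella}.'' Your outline --- rescale to $R=1$, use Chebyshev plus slicing and a one-variable Remez/Tur\'an inequality to pass from smallness of $|f|$ on a positive-measure set to smallness on a fixed sub-ball, then iterate Hadamard's three-balls inequality through a chain of balls in the complex analyticity neighbourhood --- is exactly the scheme carried out in \cite[\S 3]{AE}, so your proposal matches the route the paper itself points to.
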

The paper is organized as follows: in Section \ref{main_analyticity} we prove Theorem \ref{lagrananaliticidad}, give an outline for the proof of the second part of Remark \ref{R:1} after Remark \ref{R: alguno} and then, finish Section \ref{main_analyticity} with the counterexample. Section \ref{main_observability} contains the proofs of Theorems \ref{T:otroteoremilla} and \ref{T:otroteoremilla2} and Remark \ref{R: elultimo}.
Section \ref{comment} provides a historical background on previous works.
 Section \ref{S:apendice} is an appendix which contains some Lemmas we use in Section \ref{main_analyticity}. 
 
\section{Proof of Theorem \ref{lagrananaliticidad}}\label{main_analyticity}
We prove Theorem \ref{lagrananaliticidad} in several steps following the scheme devised in \cite[Ch. 3, \S 3]{Friedman1}. Throughout the work $N$ denotes a constant depending on $\varrho$, $n$, $m$ and $R$. We also define
\begin{equation*}
\sigma=1/(2m-1),\  b=(2m-1)/2m,
\end{equation*}
\begin{equation*}
\|\cdot\|=\|\cdot\|_{L^2(\Omega\times (0,1))},\ \|\cdot\|_r=\|\cdot\|_{L^2(B_r\times (0,1))}\ \text{and}\ \|\cdot\|_r'=\|\cdot\|_{L^2(B_r^+\times (0,1))}  
\end{equation*}
with $B_r^+=\{x\in B_r : x_n>0\}$. We first prove an estimate related to the time-analyticity of global solutions.
\begin{lemma}\label{Th:time_analyticity} Assume that $\mathcal L$ satisfies \eqref{parabolicidad} and \eqref{coeff:time_analytic}. Then, there are $M=M(\varrho,n,m)$ and $\rho=\rho(\varrho,n,m)$, $0<\rho\le1$, such that 
\begin{equation}\label{time_analy}
\|t^{p+1} \partial_t^{p+1} u \|+\sum_{l=0}^{2m} \|t^{p+\frac{l}{2m}}D^l \p_t^{p}u \| \leq M \rho^{-p}(p+1)!\|u\|,
\end{equation}
holds for $p\in \N$ and all solutions $u$ to \eqref{problemaparabolico}.
\end{lemma}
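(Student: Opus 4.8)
The plan is to prove \eqref{time_analy} by induction on $p$, quantifying the growth of the weighted norms $\|t^{p+l/2m}D^l\partial_t^p u\|$ and $\|t^{p+1}\partial_t^{p+1}u\|$ using the Schauder estimate \eqref{E: Schauder estimate} applied to the time-differentiated equations. For $p=0$ the estimate is essentially \eqref{E: Schauder estimate} itself: $u$ solves $\partial_t u+\mathcal Lu=0$, and although $u(0)$ need not vanish, one applies \eqref{E: Schauder estimate} to $v=\chi(t)u$ for a suitable cutoff $\chi$ vanishing near $t=0$, or more directly one uses the weighted Lemmas \ref{Lema:regularidad1}, \ref{Lema:regularidad3}, \ref{Lema:regularidad4} (quoted as available) to absorb the time weights $t^{l/2m}$ that compensate for the blow-up near $t=0$. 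This gives the base case with some $M$.

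For the inductive step, I would differentiate the equation $p$ times in $t$. Writing $u_p=\partial_t^p u$, Leibniz's rule gives
\begin{equation*}
\partial_t u_p+\mathcal L u_p=-\sum_{j=1}^{p}\binom{p}{j}(\partial_t^j\mathcal L)u_{p-j}=:F_p,
\end{equation*}
where $\partial_t^j\mathcal L$ is the operator with coefficients $\partial_t^j a_\alpha$. Now I multiply by the weight $t^{p+1}$ (or $t^{p+l/2m}$ for the lower-order terms) and apply \eqref{E: Schauder estimate} to the function $t^{p+1}u_p$ — here the commutator $[\partial_t,t^{p+1}]=(p+1)t^p$ produces a lower-order-in-$p$ term $(p+1)\|t^p u_p\|$ that is controlled by the induction hypothesis at level $p-1$ (after noting $\|t^p\partial_t^p u\|$ is among the quantities bounded at step $p-1$, via the $l=0$ term with $p$ replaced by $p$, or by a parallel estimate). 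The forcing term $t^{p+1}F_p$ is estimated using the analyticity bound \eqref{coeff:time_analytic}, $|\partial_t^j a_\alpha|\le\varrho^{-1-j}j!$, so that
\begin{equation*}
\|t^{p+1}F_p\|\le\varrho^{-1}\sum_{j=1}^{p}\binom{p}{j}\varrho^{-j}j!\sum_{|\alpha|\le 2m}\|t^{p+1-j}\,t^{j}D^{|\alpha|}u_{p-j}\|,
\end{equation*}
and each term $\|t^{(p-j)+|\alpha|/2m}D^{|\alpha|}u_{p-j}\|$ with the leftover power $t^{j+1-|\alpha|/2m}\le 1$ absorbed into the weight is bounded by $M\rho^{-(p-j)}(p-j+1)!\|u\|$ by the inductive hypothesis (here $|\alpha|\le 2m$ matches the range $l\le 2m$ in \eqref{time_analy}). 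The combinatorial sum $\sum_{j}\binom{p}{j}\varrho^{-j}j!\,\rho^{-(p-j)}(p-j+1)!$ is the crux: using $\binom{p}{j}j!(p-j+1)!=\frac{p!\,(p-j+1)!}{(p-j)!}=p!\,(p-j+1)$ and summing the geometric-type series in $\varrho^{-j}$, one gets a bound $\le C\,\rho^{-p}(p+1)!$ provided $\rho$ is chosen small enough relative to $\varrho$ (so that $\rho/\varrho$ is summable) and provided the base constant $M$ is large enough to swallow the constant $C$ and the contribution $K$ from \eqref{E: Schauder estimate}. This self-improving choice of $\rho$ and $M$ closes the induction.

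The main obstacle I expect is the bookkeeping of the time weights: \eqref{E: Schauder estimate} as stated requires zero initial data, so one must either work with a time-cutoff and carefully track that the cutoff derivatives land on terms with strictly fewer $t$-derivatives (hence controlled by the previous induction step), or invoke precisely the weighted estimates of Lemmas \ref{Lema:regularidad1}, \ref{Lema:regularidad3}, \ref{Lema:regularidad4} — whose role in the excerpt is exactly to handle the $t^{l/2m}$-weighted $L^2$ bounds near the initial time. Getting the powers of $t$ to match up so that every leftover factor $t^{\text{something}\ge 0}$ stays bounded by $1$ on $(0,1)$, while the factorial and $\rho$-power growth stays exactly of the claimed order $(p+1)!\,\rho^{-p}$, is where all the care goes; the analytic combinatorics are then routine given the sharp form of \eqref{coeff:time_analytic} and the inequalities \eqref{unadelasclaves}.
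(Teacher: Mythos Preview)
Your proposal is correct and follows essentially the same route as the paper: induction on $p$, differentiate the equation to obtain $\partial_t u_p+\mathcal L u_p=F_p$, apply the weighted Schauder estimate (this is precisely Lemma~\ref{Lema:regularidad1} with $\theta=0$, $k=2$, which packages your ``apply \eqref{E: Schauder estimate} to $t^{p+1}u_p$'' step and automatically handles the zero-initial-data issue since $t^{p+1}u_p|_{t=0}=0$), bound the commutator term $(p+1)\|t^p\partial_t^p u\|$ by the induction hypothesis at level $p-1$, and close the combinatorial sum on $F_p$ exactly as you do (the paper records this as Lemma~\ref{L:postleibniz}). One small remark: for this particular lemma the exponential weight is absent ($\theta=0$), so \eqref{unadelasclaves} plays no role here---it enters only in the later spatial-analyticity lemmas.
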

\begin{proof} We prove \eqref{time_analy} by induction on $p$. For the case $p=0$ of \eqref{time_analy}, apply the weighted $L^2$ estimate in Lemma \ref{Lema:regularidad1} with $\theta=0$, $k=2$ and $F=0$. It suffices to choose $M\geq 3N$. By differentiating \eqref{problemaparabolico}, we find that $\p_t^p u$, $p\ge 1$, satisfies 
\begin{equation*}
\begin{cases}
\p_t^{p+1} u+\mathcal L\p_t^p u=F_p,\ &\text{in}\ \Omega\times (0,1],\\
\p_t^p u=D\p_t^p u=\cdots=D^{m-1}\p_t^p u=0 ,\ &\text{on}\ \partial\Omega\times (0,1],
\end{cases}
\end{equation*}
with
\begin{equation*}
F_p=(-1)^{m+1}\sum_{|\alpha|\leq 2m} \sum_{q=0}^{p-1} \binom{p}{q}\p_t^{p-q}a_{\alpha} \p_t^q \p_x^\alpha u.
\end{equation*}
Assume that \eqref{time_analy} holds up to $p-1$ for some $p\ge 1$ and apply the weighted $L^2$ estimate in Lemma \ref{Lema:regularidad1} with $\theta=0$ and $k=2$ to $\p_t^p u$ to obtain
\begin{equation*}
\|t^{p+1}\p_t^{p+1} u \|+\sum_{l=0}^{2m}\|t^{p+\frac l{2m}}D^{l} \p_t^p u \|
\leq N \left[2(p+1)\|t^p \p_t^{p} u \|+ \|t^{p+1}  F_p \| \right]\triangleq I_1+I_2.
\end{equation*}
By the induction,
\begin{equation*}
\|t^p \p_t^{p} u \| \leq M  \rho^{-p+1}p! \|u\|\, .
\end{equation*}
From \eqref{coeff:time_analytic} and induction
\begin{equation*}
\begin{split}
\|t^{p+1} F_p \| &\leq \sum_{|\alpha|\leq 2m} \sum_{q<p}\binom{p}{q}\varrho^{-1-p+q} (p-q)! \|t^{q+\frac{|\alpha|}{2m}}  \p_t^q \partial_x^\alpha  u\| \\
& \leq  \sum_{|\alpha|\leq 2m} \sum_{q<p}\binom{p}{q}\varrho^{-1-p+q} (p-q)! M \rho^{-q} (q+1)!\|u\|\\
& \leq N M p \sum_{q<p}\binom{p}{q} (p-q)!q!{\varrho}^{-p+q}\rho^{-q}\|u\| \\
& \leq M  \rho^{-p} (p+1)!\|u\|\frac{N\rho}{\varrho-\rho},
\end{split}
\end{equation*}
where the last inequality follows from Lemma \ref{L:postleibniz}. Adding $I_1$ and $I_2$, we get
\begin{equation*}
I_1+I_2 \leq M \rho^{-p}(p+1)! \|u\|  N \left(\rho +\frac{\rho}{\varrho-\rho}\right)
\end{equation*}
and the induction for $p$ follows after choosing $\rho=\rho(\varrho,n,m)$ small.
\end{proof}

Lemma \ref{Lem:interior} yields an interior quantitative estimate of spatial analyticity.
\begin{lemma}\label{Lem:interior} Let $0<\theta\le 1$, $0<\frac{R}{2}<r<R\le 1$, $B_R\subset \Omega$ and  $\mathcal L$
satisfy \eqref{coeff:space_analytic} for $p=0$ over $B_R\times [0,1]$. Then, there are $M=M(\varrho, n, m)$
and $\rho=\rho(\varrho,n,m)$, $0<\rho\le 1$, such that for all $\gamma\in \N^n$, the inequality
\begin{multline}\label{tan_analy}
(R-r)^{2m}\|t\peso \p_t\p_{x}^\gamma u\|_r+\sum_{k=0}^{2m}(R-r)^k\|t^{\frac{k}{2m}}\peso D^k\p_{x}^\gamma u\|_{r}\\
\leq M\left[\rho \theta^b (R-r) \right]^{-|\gamma|}|\gamma|!\|u\|_R
\end{multline}
holds when $u$ in $C^\infty(B_R\times [0,1])$ satisfies $\partial_t u+\mathcal Lu=0$ in $B_R\times [0,1]$.
\end{lemma}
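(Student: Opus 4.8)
The plan is to prove \eqref{tan_analy} by induction on $|\gamma|$, keeping the pair $R/2<r<R\le 1$ free throughout, so that at the inductive step the case $|\gamma|-1$ is available with a slightly larger inner radius. For $|\gamma|=0$ the inequality is, after fixing $M$, the interior weighted $L^2$ estimate of Lemma \ref{Lema:regularidad3} applied to $u$ with $F=0$ and the smallest admissible value of its parameters: the right-hand side is then a constant multiple of $\|\pesodos u\|_R\le\|u\|_R$, so \eqref{tan_analy} holds once $M=M(\varrho,n,m)$ is large.

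For $|\gamma|\ge 1$, write $\gamma=\gamma'+e_i$ with $|\gamma'|=|\gamma|-1$ and set $v=\p_x^\gamma u$. Differentiating $\p_tu+\mathcal Lu=0$ by $\p_x^\gamma$ and applying Leibniz's rule, $v\in C^\infty(B_R\times[0,1])$ solves $\p_tv+\mathcal Lv=G_\gamma$ in $B_R\times[0,1]$ with
\begin{equation*}
G_\gamma=(-1)^{m+1}\sum_{|\alpha|\le 2m}\ \sum_{0<\beta\le\gamma}\binom{\gamma}{\beta}\bigl(\p_x^\beta a_\alpha\bigr)\,\p_x^{\alpha}\bigl(\p_x^{\gamma-\beta}u\bigr).
\end{equation*}
The structural point that makes the induction close is that each term of $G_\gamma$ is a coefficient derivative $\p_x^\beta a_\alpha$ with $1\le|\beta|\le|\gamma|$ times $\p_x^\alpha$, $|\alpha|\le 2m$, acting on $\p_x^{\gamma-\beta}u$ with $|\gamma-\beta|\le|\gamma|-1$, while every $D^lv$ with $l<2m$ is a component of $D^{l+1}\p_x^{\gamma'}u$ with $l+1\le 2m$; hence $G_\gamma$ together with all the lower-order derivatives of $v$ that occur on the right-hand side of a weighted $L^2$ estimate for $v$ are covered by the induction hypothesis, which for a multi-index of order $\le|\gamma|-1$ bounds exactly its spatial derivatives up to order $2m$ in the weighted norms of \eqref{tan_analy}.

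I would then apply Lemma \ref{Lema:regularidad3} to $v$ on the balls $B_r\subset B_{\widetilde r}$ with $\widetilde r=r+\tfrac{R-r}{|\gamma|+1}$ — chosen so that $\tfrac12(R-r)\le R-\widetilde r<R-r$ and $\bigl(\tfrac{|\gamma|+1}{|\gamma|}\bigr)^{|\gamma|-1}\le e$, the latter keeping the loss $[\rho\theta^b(R-\widetilde r)]^{-(|\gamma|-1)}\le e\,[\rho\theta^b(R-r)]^{-(|\gamma|-1)}$ uniform in $|\gamma|$ — and with the given $\theta$. This bounds the left-hand side of \eqref{tan_analy} by $N$ times a sum of (i) a weight-correction term in $v$ plus terms involving $D^lv$ with $l<2m$, and (ii) the forcing term $(R-r)^{2m}\|t\peso G_\gamma\|_{\widetilde r}$. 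For (i), rewrite $D^lv=D^{l+1}\p_x^{\gamma'}u$, apply the induction hypothesis with the pair $(\widetilde r,R)$, and reconcile the remaining $t$- and $\theta$-powers by means of the elementary inequalities \eqref{unadelasclaves} — the step that produces the factor $\theta^b$ — to get a bound by $M\,[\rho\theta^b(R-r)]^{-|\gamma|}|\gamma|!\,\|u\|_R$ times a small multiple of $\rho$. For (ii), bound $|\p_x^\beta a_\alpha|\le\varrho^{-1-|\beta|}|\beta|!$ by \eqref{coeff:space_analytic}, bound each $\|t^{|\alpha|/2m}\peso\,\p_x^\alpha\p_x^{\gamma-\beta}u\|_{\widetilde r}$ by the induction hypothesis for $\gamma-\beta$ with the pair $(\widetilde r,R)$, and sum over $\beta$ with Lemma \ref{L:postleibniz}: the weight $\binom{\gamma}{\beta}|\beta|!\,(|\gamma|-|\beta|)!$ collapses to $|\gamma|!$ times a geometric series in $\rho/\varrho$, again yielding $M\,[\rho\theta^b(R-r)]^{-|\gamma|}|\gamma|!\,\|u\|_R$ times a small multiple of $\rho$. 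Adding (i) and (ii), the left-hand side of \eqref{tan_analy} is at most that expression times a factor $N\bigl(\rho+\tfrac{\rho}{\varrho-\rho}\bigr)$, just as in the proof of Lemma \ref{Th:time_analyticity}, and choosing $\rho=\rho(\varrho,n,m)$ small closes the induction.

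The delicate part is the weight bookkeeping. Using only the $W^{2m,1}_2$ Schauder estimate \eqref{E: Schauder estimate} one would pick up at each step a polynomial-in-$|\gamma|$ loss — coming from the cutoff derivatives $|D^k\eta|\lesssim(R-\widetilde r)^{-k}$ together with the order $2m$ of $\mathcal L$ — which accumulates into a bound that is only Gevrey of class $2m$ in $x$, not analytic; this is the spatial counterpart of the phenomenon recorded in Remark \ref{R:1} for the time variable. Replacing \eqref{E: Schauder estimate} by the sharp weighted $L^2$ estimates of Lemmas \ref{Lema:regularidad1}, \ref{Lema:regularidad3} and \ref{Lema:regularidad4} — which exploit that $u$ exists on the whole interval $(0,1]$ before $t$ — one trades those polynomial losses against negative powers of $\theta$ via the precise bounds \eqref{unadelasclaves}; it is exactly here that the exponents $\sigma=\tfrac1{2m-1}$ and $b=\tfrac{2m-1}{2m}$ are forced and the estimate becomes genuinely analytic in $x$, with the claimed factor $\theta^b$. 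A related point is that the top-order term $D^{2m}v$ is beyond the reach of the induction hypothesis at level $|\gamma|-1$, which is why the argument must go through the equation for $v=\p_x^\gamma u$ rather than through the bounds already known for $\p_x^{\gamma'}u$.
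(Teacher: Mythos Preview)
Your overall framework---induction on $|\gamma|$, differentiation of the equation, the weighted interior estimate applied to $v=\partial_x^\gamma u$, and Lemma~\ref{L:postleibniz} for the commutator $G_\gamma$---matches the paper's. The treatment of the forcing term (your (ii)) is correct and is exactly what the paper does for its $I_3$.

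The gap is in (i). After Lemma~\ref{Lema:regularidad4} (you wrote \ref{Lema:regularidad3}, the half-ball version; the interior one is \ref{Lema:regularidad4}), the right-hand side does \emph{not} contain terms $D^lv$ with $0<l<2m$: it contains $Nk\,\|e^{-\frac{k-1}{k}\theta t^{-\sigma}}v\|_{\widetilde r}$, then $\delta^{-2m}\|t\,e^{-\theta t^{-\sigma}}v\|_{\widetilde r}$, and then the forcing. Your plan is to bound both $v$-terms by stepping back one index, $v=\partial_{x_i}\partial_x^{\gamma'}u$ with $|\gamma'|=|\gamma|-1$, and invoking the $l=1$ slot of the induction hypothesis. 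This does not close. With $\delta=(R-r)/(|\gamma|+1)$ one must take $k\sim|\gamma|$ (a fixed $k$ would force $\theta'\le\tfrac{k-1}{k}\theta$ in the induction and produce an exponential loss $((k/(k-1))^{b})^{|\gamma|}$). Then the weight conversion via \eqref{unadelasclaves},
\[
e^{-\frac{k-1}{k}\theta t^{-\sigma}}=\bigl(t^{-\frac{1}{2m}}e^{-\frac{k-1}{k^2}\theta t^{-\sigma}}\bigr)\cdot\bigl(t^{\frac{1}{2m}}e^{-(\frac{k-1}{k})^2\theta t^{-\sigma}}\bigr),
\]
costs a factor $\bigl(\tfrac{k^2}{(k-1)\theta}\bigr)^{b}\sim|\gamma|^{b}\theta^{-b}$. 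Together with the prefactor $k\sim|\gamma|$ this is a loss $\sim|\gamma|^{1+b}$, while the factorial gain from one step back is only $|\gamma|$; the induction closes only up to an extra $|\gamma|^{b}=|\gamma|^{(2m-1)/2m}$, which is unbounded. The $\delta^{-2m}$-term fails for the same reason: the loss is $\delta^{-2m}\sim|\gamma|^{2m}(R-r)^{-2m}$, and a single step back recovers only one power of $|\gamma|$ and one of $[\rho\theta^b(R-r)]^{-1}$.

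The paper fixes this by stepping back $2m$ indices when $|\gamma|>2m$: write $|\partial_x^\gamma u|\le|D^{2m}\partial_x^\xi u|$ with $|\xi|=|\gamma|-2m$, split
\[
e^{-\frac{|\gamma|-1}{|\gamma|}\theta t^{-\sigma}}=\bigl(t^{-1}e^{-\frac{|\gamma|-1}{|\gamma|^2}\theta t^{-\sigma}}\bigr)\cdot\bigl(t\,e^{-(1-\tfrac{1}{|\gamma|})^2\theta t^{-\sigma}}\bigr),
\]
and use the top-order ($l=2m$) slot of the induction at level $\xi$ with the slightly reduced weight $(1-\tfrac{1}{|\gamma|})^2\theta$. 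Now the loss is $k\cdot|\gamma|^{2m-1}\sim|\gamma|^{2m}$ and the factorial gain is $|\gamma|!/(|\gamma|-2m)!\sim|\gamma|^{2m}$: these balance exactly, and the remaining factor is $N\rho^{2m}$, which is small. (For $1\le|\gamma|\le 2m$ one goes all the way back to the base case with $k=2$.) In short, the induction must descend by $2m$ indices, not one; this is where the choice $\sigma=\tfrac{1}{2m-1}$ shows up, since the exponent in $t^{-1}e^{-c t^{-\sigma}}\le Nc^{-1/\sigma}$ is precisely $2m-1$.
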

\begin{proof} We prove \eqref{tan_analy} by induction on $|\gamma|$. When $|\gamma|=0$, by the weighted $L^2$ estimate in Lemma \ref{Lema:regularidad4} with $k=2$, $p=0$, $\delta=\frac{R-r}{2}$ and $F=0$, we have
\begin{multline*}
\|t\peso \p_t u \|_r+\|t\peso D^{2m} u \|_r\\
  \leq N \left[(R-r)^{-2m}\|t \peso u\|_{r+\delta} + \| \pesodos u \|_{r+\delta}\right]\\
  \leq N(R-r)^{-2m} \|u\|_R.
\end{multline*}
and Lemma \ref{Lema:interpolation} with $M= 2N$ implies
\begin{equation}\label{E: primercaso}
(R-r)^{2m}\|t\peso \p_t u \|_r+\sum_{l=0}^{2m}(R-r)^l\|t^{\frac l{2m}}\peso D^{l} u \|_r
\leq M  \|u\|_R,
\end{equation}
\color{black}

Next, assume that \eqref{tan_analy} holds for multi-indices $\gamma$, with $|\gamma|\leq l$, $l\ge 0$, and we  show  that \eqref{tan_analy} holds for any multi-index of the same form with $|\gamma|=l+1$. Differentiating  \eqref{problemaparabolicoboundary} we find that $\p_{x}^\gamma u$ satisfies
\begin{equation*}
\p_t\p_{x}^\gamma u+\mathcal L\p_{x}^\gamma u=F_\gamma,\ \text{in}\ B_R\times(0,1],
\end{equation*}
with
\begin{equation}\label{nonhom:tan}
F_\gamma=(-1)^{m+1}\sum_{|\alpha|\leq 2m} \sum_{\beta<\gamma} \binom{\gamma}{\beta} \p_{x}^{\gamma-\beta}a_\alpha \p_{x}^{\beta}\p_x^{\alpha}u.
\end{equation}
Applying the weighted $L^2$ estimate in Lemma \ref{Lema:regularidad4} to $\p_{x}^\gamma u$ with $p=0$, we get 
\begin{multline}\label{formulote:tan}
\|t\peso \p_t \p_{x}^\gamma u \|_r+\|t\peso D^{2m} \p_{x}^\gamma u \|_r 
\\ \leq N\left[ k \| e^{{-\frac{k-1}{k}}\theta t^{-\sigma}}\p_{x}^\gamma u \|_{r+\delta}\right.+\delta^{-2m}\|t \peso \p_{x}^\gamma u\|_{r+\delta}\\\left. +\|t\peso F_\gamma \|_{r+\delta}   \right]\triangleq I_1+I_2+I_3.
\end{multline}
{\it Estimate for $I_1$}: when $1\le |\gamma|\leq 2m$, choose $k=2$ and $\delta=(R-r)/2$ in \eqref{formulote:tan}. Also observe the bound 
\begin{equation}\label{unadelasclaves}
t^{-\alpha}e^{-\theta t^{-\beta}}\leq e^{-\frac{\alpha}{\beta}}\theta^{-\frac{\alpha}{\beta}}\left(\frac{\alpha}{\beta}\right)^{\frac{\alpha}{\beta}},\ \text{when}\ \alpha,\, \beta,\,\theta\ \text{and}\ t>0,
\end{equation}
which yields $$t^{-\frac{|\gamma|}{2m}}e^{-\frac{\theta}{4}t^{-\sigma}}\leq N \theta^{-b|\gamma|},\text{ when }|\gamma|\leq 2m\text{ and }t>0.$$
Thus, we get \color{black} 
\begin{equation}\label{112}
\begin{split}
\|\pesodos \p_{x}^\gamma u \|_{r+\delta}&= \|t^{-\frac{|\gamma|}{2m}}e^{-\frac{\theta}{4}t^{-\sigma}}t^{\frac{|\gamma|}{2m}}e^{-\frac{\theta}{4}t^{-\sigma}}  \p_{x}^\gamma u \|_{r+\delta}\\
&  \leq N\theta^{-b|\gamma|}\|t^\frac{|\gamma|}{2m} e^{-\frac{\theta}{4}t^{-\sigma}} D^{|\gamma|} u \|_{r+\delta},
\end{split}
\end{equation}
when $|\gamma|\leq 2m$.
From  \eqref{E: primercaso} 
$$\|t^\frac{|\gamma|}{2m} e^{-\frac{\theta}{4}t^{-\sigma}} D^{|\gamma|} u \|_{r+\delta}\leq M(R-r)^{-|\gamma|} \|u\|_R,$$
this, together with \eqref{112} shows that
\begin{equation*}
\|\pesodos \p_{x}^\gamma u \|_{r+\delta}\leq N M \left[\theta^b(R-r)\right]^{-|\gamma|} \|u\|_R\leq M \left[\rho\theta^b(R-r)\right]^{-|\gamma|} \|u\|_RN\rho.
\end{equation*}

If $|\gamma|>2m$, choose $k=|\gamma|$, $\delta=(R-r)/|\gamma|$ in \eqref{formulote:tan} and observe that there is a multi-index $\xi$, with $2m+|\xi|=|\gamma|$ and $|\p_{x}^{\gamma}u|\leq |D^{2m}\p_{x}^{\xi}u|$. 
Hence, from \eqref{unadelasclaves}
\begin{multline} \label{unaayuda}
\| e^{-\frac{|\gamma|-1}{|\gamma|}\theta t^{-\sigma}} \p_{x}^\gamma u \|_{r+\delta}
=\|t^{-1} e^{{-\frac{|\gamma|-1}{|\gamma|^2}}\theta t^{-\sigma}} t e^{{-\left(1-\frac{1}{|\gamma|}\right)^2}  \theta t^{-\sigma}}\p_{x}^\gamma u \|_{r+\delta}\\
\leq N \theta^{-(2m-1)} |\gamma|^{2m-1} \|t e^{-\left(1-\frac{1}{|\gamma|}\right)^2 \theta t^{-\sigma}}   D^{2m}\p_{x}^\xi u \|_{r+\delta}.
\end{multline}
By induction and because $R-r-\delta=\frac{|\gamma|-1}{|\gamma|}(R-r)$,
\begin{equation}\label{otraayuda}
\begin{split}
(R-r)^{2m}&\|t e^{-\left(1-\frac{1}{|\gamma|}\right)^2 \theta t^{-\sigma}}   D^{2m}\p_{x}^\xi u \|_{r+\delta}\\
&\leq  M \left[\rho \left(1-\frac{1}{|\gamma|} \right)^{2b} \theta^b \left(R-r-\delta \right) \right]^{-|\gamma|+2m} \\
&\times (|\gamma|-2m)!\|u\|\\
&=M \left(1-\frac{1}{|\gamma|} \right)^{-(2b+1)(|\gamma|-2m)}\left[\rho \theta^b (R-r) \right]^{-|\gamma|+2m}\\
&\times(|\gamma|-2m)!\|u\|_R\\
&\leq MN \left[\rho \theta^b (R-r) \right]^{-|\gamma|+2m}(|\gamma|-2m)!\|u\|_R,
\end{split}
\end{equation}
where the last inequality is a consequence of the estimate 
\begin{equation} \label{amiga}
\left(1-\frac{1}{|\gamma|} \right)^{-(2b+1)(|\gamma|-2m)}\leq N,\ \text{for all}\  \gamma \in \N^n.
\end{equation}
Plugging \eqref{otraayuda} into \eqref{unaayuda} and using that $|\gamma|^{2m}(|\gamma|-2m)!\leq N|\gamma|!$, we get
\begin{align*}
I_1&\leq N |\gamma| \| e^{-\frac{|\gamma|-1}{|\gamma|}\theta t^{-\sigma}} \p_{x}^\gamma u \|_{r+\delta}\\
&\leq M  \left[\rho \theta^b (R-r)\right]^{-|\gamma|} |\gamma|^{2m}(|\gamma|-2m)! \|u\|_R N\rho^{2m}\\
&\leq M  \left[\rho \theta^b (R-r)\right]^{-|\gamma|} |\gamma|! \|u\|_R (R-r)^{-2m}N\rho.
\end{align*}
{\it Estimate for $I_2$}: when $|\gamma|\leq 2m$, the term can be handled like the term $I_1$ in the case $|\gamma|\leq 2m$, but now one does not need to push inside $I_1$ the factor $t^{|\gamma|/2m}$ as we did in \eqref{112}. Here, from \eqref{E: primercaso} we get
\begin{equation*}
I_2\le M \left[\rho\theta^b(R-r)\right]^{-|\gamma|} \|u\|_R(R-r)^{-2m}N\rho.
\end{equation*}
When $|\gamma|>2m$, again $|\p_{x}^\gamma u|\leq |D^{2m}\p_{x}^{\xi}u|$, for some $\xi$ such that $2m+|\xi|=|\gamma|$.
By induction (recall that $\delta=(R-r)/|\gamma|$ was already chosen in the estimate for $I_1$, when $|\gamma|>2m$) we get 
\begin{align*}
I_2&\leq N(R-r)^{-2m}|\gamma|^{2m}\|t \peso\p_{x}^{\gamma}u\|_{r+\delta}\\
& \leq N(R-r)^{-2m}|\gamma|^{2m} \|t \peso D^{2m}\p_{x}^{\xi}u\|_{r+\delta} \\
&\leq N M \left[\rho \theta^b (R-r) \right]^{-|\gamma|+2m}|\gamma|^{2m}(|\gamma|-2m)! \|u\|_R(R-r)^{-4m}\\
& \leq M \left[\rho \theta^b (R-r) \right]^{-|\gamma|} |\gamma|! \|u\|_R(R-r)^{-2m} N\rho.
\end{align*}
{\it Estimate for $I_3$}: by the induction hypothesis and Lemma \ref{L:postleibniz}
\begin{equation*}
\begin{split}
&\|t\peso F_\gamma\|_{r+\delta}\leq N\sum_{|\alpha|\leq 2m} \sum_{\beta<\gamma} \binom{\gamma}{\beta}\varrho^{-|\gamma-\beta|}|\gamma-\beta|!\|t^{\frac{|\alpha|}{2m}} \peso D^{|\alpha|}\p_{x}^{\beta}u\|_{r+\delta}\\
&\leq NM \sum_{\beta < \gamma}\binom{\gamma}{\beta} \varrho^{-1-|\gamma-\beta|}|\gamma-\beta|! \left[\rho \theta^b (R-r) \right]^{-|\beta|}|\beta|!\|u\|_R (R-r)^{-2m} \\
& \leq NM \left[\theta^b (R-r) \right]^{-|\gamma|}\|u\|_R (R-r)^{-2m}\sum_{\beta <\gamma} \binom{\gamma}{\beta}|\gamma-\beta|!|\beta|! \varrho^{-|\gamma-\beta|}\rho^{-|\beta|}\\
&\leq  M \left[\rho\theta^b(R-r) \right]^{-|\gamma|}|\gamma|!\|u\|_R (R-r)^{-2m} \frac{N\rho}{\varrho-\rho}.
\end{split}
\end{equation*}
The bounds for $I_1$, $I_2$ and $I_3$ imply that
\begin{equation}\label{E: casi alli estas}  
I_1+I_2+I_3\le M  \left[\rho\theta^b(R-r) \right]^{-|\gamma|}|\gamma|!\|u\|_R (R-r)^{-2m} N \rho\left(1+\frac{1}{\varrho-\rho}\right).
\end{equation}\label{E: segundacosad}

We can write, $\gamma=\xi+e_i$, for some $\xi\in\N^n$ and $i=1,\dots,n$, and from the induction and \eqref{unadelasclaves}
\begin{equation}\label{E: segundocaso}
\|e^{-\theta t^{-\s}}\partial^\gamma_{x}u\|_r\le N\theta^{-b}\|t^{\frac 1{2m}}e^{-\frac\theta2 t^{-\s}}D\partial^{\gamma-e_i}_{x}u\|_r\le M \left[\rho\theta^b(R-r) \right]^{-|\gamma|}|\gamma|!\|u\|_RN\rho.
\end{equation}
Finally, Lemma \ref{Lema:interpolation}, \eqref{formulote:tan}, \eqref{E: casi alli estas} and \eqref{E: segundocaso} imply the desired result when $\rho=\rho(\varrho,n,m)$ is small.
\end{proof}
\begin{remark}\label{R: unacosiat} Lemma \ref{Lem:interior} also holds when the coefficients of $\mathcal L$ are measurable in the time variable and satisfy \eqref{coeff:space_analytic} for $p=0$ over $B_R\times [0,1]$. This follows from the interior $W^{2m,1}_2$ Schauder estimate in \cite[Theorem 2]{DongKim} and the weighted $L^2$ estimate in Lemma \ref{Lema:regularidad4}.
\end{remark}
Next we state the quantitative estimates of spatial analyticity in directions locally  tangent to the boundary of $\Omega$ that the methods in Lemma \ref{Lem:interior} yield. For this purpose, we flatten locally $B_R(q_0)\cap\p \Omega$, with $q_0\in \p \Omega$, by means of the analytic change of variables
\begin{align*}
y_n=x_n-\varphi(x'),\ \ \ & y_j=x_j,\ j=1\ldots,n-1,
\end{align*}
where $\varphi$ is the analytic function introduced in \eqref{E:descripcionfrontera}. The local change of variables does not modify the local conditions satisfied by $\mathcal L$ and without loss of generality we may assume that a solution to \eqref{problemaparabolico} verifies
\begin{equation}\label{problemaparabolicoboundary}
\begin{cases}
\p_tu+\mathcal Lu=0,\ &\text{in}\ B_R^+\times (0,1],\\
u=Du=\ldots=D^{m-1}u=0,\ \ &\text{in}\ \{x_n=0\}\cap\partial B_R^+ \times (0,1],
\end{cases}
\end{equation}
with $u$ in $C^\infty(B_R^+\times [0,1])$ and $0<R\le 1$.

Here, we use multi-indices of the form $\left(\gamma_1,\ldots,\gamma_{n-1},0\right)\in \N^n$ and write $\p_{x'}^\gamma$ instead of $\p_{x}^\gamma$ to emphasize that $\p_{x'}^{\gamma}$ does not involve derivatives with respect to the  variable $x_n$. Lemma \ref{Th:tan_analy} is proved as Lemma \ref{Lem:interior} but with Lemma \ref{Lema:regularidad4} replaced by Lemma \ref{Lema:regularidad3}. We omit the proof.

\begin{lemma}\label{Th:tan_analy} Let $0<\theta\le 1$, $0<\frac{R}{2}<r<R\le 1$ and  assume that $\mathcal L$ satisfies \eqref{coeff:space_analytic} for $p=0$ over $B_R^+\times[0,1]$. Then, there are $M=M(\varrho,n,m)$ and $\rho=\rho(\varrho,n,m)$, $0<\rho\le 1$,  such that for all $\gamma\in \N^n$ with $\gamma_n=0$, the inequality
\begin{multline*}
(R-r)^{2m}\| t\peso \p_t\p_{x'}^\gamma u \|_r'+\sum_{k=0}^{2m} (R-r)^k \|t^\frac{k}{2m} \peso D^k \p_{x'}^\gamma u \|_r' \\
\leq M  \left[\rho \theta^b (R-r)\right]^{-|\gamma|} |\gamma|!\|u\|_R',
\end{multline*}
holds when $u$ in $C^\infty(B_R^+\times [0,1])$ satisfies \eqref{problemaparabolicoboundary}.
\end{lemma}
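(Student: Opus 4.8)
The plan is to transcribe the proof of Lemma \ref{Lem:interior} almost verbatim, inducting on $|\gamma|$ over the multi-indices $\gamma\in\N^n$ with $\gamma_n=0$, and replacing every use of the interior weighted $L^2$ estimate Lemma \ref{Lema:regularidad4} by the half-ball weighted estimate Lemma \ref{Lema:regularidad3}. For the base case $|\gamma|=0$ I would apply Lemma \ref{Lema:regularidad3} with $k=2$, $p=0$, $\delta=(R-r)/2$ and $F=0$ to bound $\|t\peso\, \p_t u\|_r'+\|t\peso D^{2m}u\|_r'$ by $N(R-r)^{-2m}\|u\|_R'$, and then interpolate with Lemma \ref{Lema:interpolation} on the half-ball $B_r^+$ to recover the $|\gamma|=0$ instance of the claimed inequality, say with $M=2N$. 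This is exactly the half-space analogue of \eqref{E: primercaso}.

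For the inductive step the crucial observation is that $\p_{x'}^\gamma$ differentiates only in directions tangent to $\{x_n=0\}$, so it commutes with the trace operators $D^j$, $j\le m-1$; hence, if $u$ solves \eqref{problemaparabolicoboundary}, then $\p_{x'}^\gamma u$ solves a problem of the same type, with the same homogeneous conditions on $\{x_n=0\}\cap\p B_R^+$ and with right-hand side $F_\gamma$ given by the Leibniz expansion \eqref{nonhom:tan} (only the coefficients, not the boundary data, produce extra terms). I would then apply Lemma \ref{Lema:regularidad3} to $\p_{x'}^\gamma u$ with $p=0$, obtaining a bound $I_1+I_2+I_3$ strictly parallel to \eqref{formulote:tan}, where $I_1$ is the term with factor $k\,\|e^{-\frac{k-1}{k}\theta t^{-\sigma}}\p_{x'}^\gamma u\|_{r+\delta}'$, $I_2$ the one with $\delta^{-2m}\|t\peso\p_{x'}^\gamma u\|_{r+\delta}'$, and $I_3=N\|t\peso F_\gamma\|_{r+\delta}'$.

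The three terms are then handled exactly as in Lemma \ref{Lem:interior}: one splits into the cases $1\le|\gamma|\le 2m$ and $|\gamma|>2m$; in the first case one takes $k=2$, $\delta=(R-r)/2$ and uses the elementary inequality \eqref{unadelasclaves} together with the base case; in the second case one takes $k=|\gamma|$, $\delta=(R-r)/|\gamma|$, picks $\xi\in\N^n$ with $\xi_n=0$, $2m+|\xi|=|\gamma|$ and $|\p_{x'}^\gamma u|\le|D^{2m}\p_{x'}^\xi u|$ pointwise, feeds this into the induction hypothesis, and uses $(1-1/|\gamma|)^{-(2b+1)(|\gamma|-2m)}\le N$ and $|\gamma|^{2m}(|\gamma|-2m)!\le N|\gamma|!$. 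For $I_3$ one uses \eqref{coeff:space_analytic} with $p=0$, the induction hypothesis, and Lemma \ref{L:postleibniz} to sum $\sum_{\beta<\gamma}\binom{\gamma}{\beta}|\gamma-\beta|!\,|\beta|!\,\varrho^{-|\gamma-\beta|}\rho^{-|\beta|}$. All three contributions come out bounded by $M[\rho\theta^b(R-r)]^{-|\gamma|}|\gamma|!\,\|u\|_R'(R-r)^{-2m}$ times a factor $N\rho\bigl(1+(\varrho-\rho)^{-1}\bigr)$, while the lower-order quantity $\|\peso\p_{x'}^\gamma u\|_r'$ is controlled by the half-space analogue of \eqref{E: segundocaso}. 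A final application of Lemma \ref{Lema:interpolation} on $B_r^+$, together with a choice of $\rho=\rho(\varrho,n,m)$ small enough to absorb the $N\rho$ factors, closes the induction.

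I do not expect a genuine obstacle here, which is why the paper omits the proof; the single point that needs care is the one isolated above, namely that tangential differentiation preserves the Dirichlet-type conditions on the flattened boundary, which is precisely what makes Lemma \ref{Lema:regularidad3} applicable to $\p_{x'}^\gamma u$ and the reason the statement is restricted to $\gamma_n=0$. One also checks, before starting, that the analytic flattening $y_n=x_n-\varphi(x')$, $y_j=x_j$, with $\varphi$ satisfying \eqref{E:descripcionfrontera}, turns $\mathcal L$ into an operator of the same order satisfying \eqref{coeff:space_analytic} for $p=0$ on $B_R^+\times[0,1]$ with a comparable constant, so that the whole argument may be carried out in the half-ball without loss of generality.
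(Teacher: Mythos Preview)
Your proposal is correct and matches the paper's approach exactly: the paper states that Lemma \ref{Th:tan_analy} is proved as Lemma \ref{Lem:interior} but with Lemma \ref{Lema:regularidad4} replaced by Lemma \ref{Lema:regularidad3}, and omits the details. You have supplied precisely those details, correctly isolating the one new point---that tangential differentiation preserves the zero Dirichlet data on $\{x_n=0\}$, so Lemma \ref{Lema:regularidad3} applies to $\p_{x'}^\gamma u$---which is indeed the reason for the restriction $\gamma_n=0$.
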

\begin{remark}\label{R:5} Lemma \ref{Th:tan_analy} also holds when the coefficients of $\mathcal L$ are measurable in the time variable and satisfy \eqref{coeff:space_analytic} for $p=0$ over $B_R^+\times[0,1]$. It follows from the weighted $L^2$ estimate in Lemma \ref{Lema:regularidad3} and \cite[Theorem 4]{DongKim}.
\end{remark}

Next, combining Lemmas \ref{Th:time_analyticity} and \ref{Th:tan_analy} one can prove the following.
\begin{lemma}\label{Th:time_tan:analy}
Let $0<\theta\le 1$, $0<\frac{R}{2}<r<R\le 1$ and assume that $\mathcal L$ satisfies \eqref{coeff:space_analytic} and \eqref{coeff:time_analytic}. Then there are $M=M(\varrho, n, m)$ and $\rho=\rho(\varrho,n,m)$, $0<\rho\le 1$, such that for all $\gamma\in \N^n$, $\gamma_n=0$, and $p\in \N$, the inequality 
\begin{multline*}
(R-r)^{2m} \|t^{p+1} \peso \p_t^{p+1} \p_{x'}^\gamma u \|_r' + \sum_{k=0}^{2m} (R-r)^k \|t^{p+\frac{k}{2m}} \peso D^k \p_t^{p}  \p_{x'}^\gamma  u \|_r' \\
\leq M \rho^{-p} \left[\rho  \theta^b (R-r) \right]^{-|\gamma|} (p+|\gamma|+1)! \|u\| 
\end{multline*}
holds when $u$ is a solution to \eqref{problemaparabolico} and \eqref{problemaparabolicoboundary}.
\end{lemma}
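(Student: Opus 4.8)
The plan is a double induction: an outer induction on $p$ and, for each fixed $p$, an inner induction on $|\gamma|$ which mimics the proof of Lemma \ref{Th:tan_analy} while carrying along an additional inhomogeneous term created by the $p$ time differentiations. The base case $p=0$ is exactly Lemma \ref{Th:tan_analy} (with $\|u\|_R'\le\|u\|$). At a fixed level $p\ge1$ the base case $|\gamma|=0$ of the inner induction follows from Lemma \ref{Th:time_analyticity}, since $\peso\le1$, $(R-r)^k\le1$ and $\|\cdot\|_r'\le\|\cdot\|$.

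For the inductive step put $w_\gamma=\p_t^p\p_{x'}^\gamma u$. Differentiating \eqref{problemaparabolicoboundary} $p$ times in $t$ and $\gamma$ times in the tangential variables, $w_\gamma$ solves $\p_tw_\gamma+\mathcal Lw_\gamma=G_{p,\gamma}$ in $B_R^+\times(0,1]$ together with $w_\gamma=Dw_\gamma=\dots=D^{m-1}w_\gamma=0$ on $\{x_n=0\}$, where
\[
G_{p,\gamma}=(-1)^{m+1}\sum_{|\alpha|\le2m}\ \sum_{(q,\beta)\neq(p,\gamma)}\binom pq\binom\gamma\beta\,\p_t^{p-q}\p_{x'}^{\gamma-\beta}a_\alpha\ \p_t^q\p_{x'}^\beta\p_x^\alpha u,
\]
the inner sum running over $0\le q\le p$, $\beta\le\gamma$. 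Apply the weighted $L^2$ estimate of Lemma \ref{Lema:regularidad3} to $w_\gamma$, choosing the parameters as in Lemma \ref{Th:tan_analy}: $k=2$, $\delta=(R-r)/2$ if $|\gamma|\le2m$, and $k=|\gamma|$, $\delta=(R-r)/|\gamma|$ together with a decomposition $\p_{x'}^\gamma=D^{2m}\p_{x'}^\xi$, $2m+|\xi|=|\gamma|$, if $|\gamma|>2m$. This bounds the top-order quantities $\|t^{p+1}\peso\p_tw_\gamma\|_r'+\|t^{p+1}\peso D^{2m}w_\gamma\|_r'$ by a sum of three lower-order contributions: a term carrying one fewer power of $t$, controlled by the outer hypothesis exactly as the term $I_1$ in the proof of Lemma \ref{Th:time_analyticity}; a term carrying fewer tangential derivatives, controlled by the inner hypothesis after \eqref{unadelasclaves} is used to release the corresponding powers of $\theta^b$, exactly as the terms $I_1,I_2$ in the proof of Lemma \ref{Lem:interior}; and the inhomogeneous term $I_3=\|t^{p+1}\peso G_{p,\gamma}\|_{r+\delta}'$.

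The estimate of $I_3$ is the only genuinely new point. Bound the coefficient derivatives in $G_{p,\gamma}$ by \eqref{coeff:space_analytic}, giving $\varrho^{-1-(p-q)-|\gamma-\beta|}(p-q)!\,|\gamma-\beta|!$; observe that $t^{p+1}\le t^{\,q+|\alpha|/2m}$ on $(0,1]$ because $q\le p$ and $|\alpha|\le2m$, so that each surviving factor $\p_t^q\p_{x'}^\beta\p_x^\alpha u$ is estimated, at the radius $r+\delta$, by the outer hypothesis when $q<p$ and by the inner hypothesis when $q=p$ (so $\beta<\gamma$). In both cases one obtains a bound proportional to $M\rho^{-q}[\rho\theta^b(R-r)]^{-|\beta|}(q+|\beta|+1)!\,(R-r)^{-2m}\|u\|$, the $|\alpha|\le2m$ extra derivatives and the small change of radius being swallowed into $(R-r)^{-2m}$ and a constant; since $\rho<\varrho$ and $\theta^b\le1$ the weight $[\rho\theta^b(R-r)]^{-|\beta|}$ is dominated by $[\rho\theta^b(R-r)]^{-|\gamma|}$. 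Resumming the double series over $q$ and $\beta$ by Lemma \ref{L:postleibniz} applied in the time index and in the tangential multi-index collapses the factorials into $(p+|\gamma|+1)!$ and yields $\rho^{-p}[\rho\theta^b(R-r)]^{-|\gamma|}(p+|\gamma|+1)!$ times a gain $N\rho/(\varrho-\rho)$. Adding the three contributions, bounding the lowest-order term $\|\peso\p_{x'}^\gamma\p_t^pu\|_r'$ by writing $\gamma=\xi+e_i$ and using \eqref{unadelasclaves} and the inner hypothesis as in \eqref{E: segundocaso}, and finally applying Lemma \ref{Lema:interpolation} to recover the intermediate orders $0\le k\le2m$, one gets the asserted inequality with an extra factor $N\rho(1+(\varrho-\rho)^{-1})$ and a stray $(R-r)^{-2m}$ which is cancelled by the $(R-r)^{2m}$ on the left; choosing $\rho=\rho(\varrho,n,m)$ small enough closes both inductions.

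I expect the main obstacle to be precisely the bookkeeping in $I_3$: the Leibniz resummations must be run simultaneously in $q$ and $\beta$ so that the factorials combine into the single $(p+|\gamma|+1)!$ rather than the too-large product $(p+1)!\,|\gamma|!$, and one must check that the powers of $\theta^b$ liberated by \eqref{unadelasclaves} in the lower-order terms are exactly matched by the weight $[\rho\theta^b(R-r)]^{-|\gamma|}$; this is where the \emph{joint} analyticity of the coefficients in $(x,t)$ and the choice $\sigma=1/(2m-1)$, $b=(2m-1)/2m$ enter decisively.
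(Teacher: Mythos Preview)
Your proposal follows essentially the same route as the paper's proof: a double induction on $(p,|\gamma|)$ with the base cases supplied by Lemmas \ref{Th:tan_analy} and \ref{Th:time_analyticity}, the weighted estimate of Lemma \ref{Lema:regularidad3} applied to $\partial_t^p\partial_{x'}^\gamma u$ with the same choices of $k$ and $\delta$, the Leibniz term $I_3$ handled via Lemma \ref{L:postleibniz} in $\N^{n+1}$, and the closing via Lemma \ref{Lema:interpolation}. The one organizational difference is that you route the ``one fewer power of $t$'' term $I_1$ through the \emph{outer} hypothesis at level $p-1$ (as in Lemma \ref{Th:time_analyticity}), whereas the paper instead absorbs $I_1$ and $I_2$ into Lemma \ref{Th:time_analyticity} when $|\gamma|\le 2m$ and into the \emph{inner} hypothesis at $(p,\xi)$ with $|\xi|=|\gamma|-2m$ when $|\gamma|>2m$, using \eqref{unadelasclaves} to trade $t^{-1}$ for $\theta^{-(2m-1)}|\gamma|^{2m-1}$; correspondingly, the paper closes the $k=0$ term by the outer hypothesis at $(p-1,\gamma)$ rather than by your $\gamma=\xi+e_i$ reduction. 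Both arrangements work, but note that if you do use the $\gamma=\xi+e_i$ trick for the $k=0$ term, you must split $e^{-\theta t^{-\sigma}}$ as $e^{-\theta/|\gamma|\,t^{-\sigma}}\cdot e^{-\theta(|\gamma|-1)/|\gamma|\,t^{-\sigma}}$ (not half--half), so that the change $\theta\mapsto\theta(|\gamma|-1)/|\gamma|$ in the inner hypothesis only costs a bounded factor $(1+\tfrac{1}{|\gamma|-1})^{b(|\gamma|-1)}\le e^b$ rather than the divergent $2^{b(|\gamma|-1)}$.
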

\begin{proof}
We proceed by induction on $p$ and within each $p$-case we proceed by induction on $|\gamma|$. The case $p=0$ and $\gamma\in \N^n$ with $\gamma_n=0$ follows from Lemma \ref{Th:tan_analy}, whereas the case $|\gamma|=0$ with arbitrary $p\ge 0$ follows from Lemma \ref{Th:time_analyticity}. 
Thus, we may in what follows assume that $|\gamma|\ge 1$. By differentiation of \eqref{problemaparabolicoboundary}, $\partial_t^p\partial^\gamma_{x'}u$ satisfies
\begin{equation*}
\begin{cases}
\p_t^{p+1} \p_{x'}^\gamma u+\mathcal L\p_t^{p} \p_{x'}^\gamma u=F_{(\gamma,p)},\ &\text{in}\ B_R^+\times (0,T],\\
\p_t^p \p_{x'}^\gamma u=D\p_t^p \p_{x'}^\gamma u=\ldots=D^{m-1}\p_t^p \p_{x'}^\gamma u=0 ,\ &\text{on}\ \{x_n=0\}\cap\partial B_R^+\times (0,T],
\end{cases}
\end{equation*}
with
\begin{equation}\label{nonhom:time_tan}
F_{(\gamma,p)}=(-1)^{m+1}\sum_{|\alpha|\leq 2m} \sum_{\substack{(q,\beta)\\ <(p,\gamma)}} \binom{p}{q} \binom{\gamma}{\beta}  \p_t^{p-q}\p_{x'}^{\gamma-\beta}a_\alpha  \p_t^q\p_{x'}^{\beta}\p_x^{\alpha}u.
\end{equation}
By the weighted $L^2$ estimate in Lemma \ref{Lema:regularidad3} applied to $\partial_t^p\partial^\gamma_{x'}u$,
\begin{multline}\label{generica}
\|t^{p+1} \peso \p_t^{p+1} \p_{x'}^{\gamma} u\|_r'+ \|t^{p+1} \peso D^{2m} \p_t^p \p_{x'}^{\gamma}u\|_r' \\
\leq N\left[ (p+k) \|t^p e^{-\theta \frac{k-1}{k}t^{-\sigma}} \p_t^p \p_{x'}^{\gamma}u\|_{r+\delta}'+ 
\delta^{-2m} \|t^{p+1} \peso  \p_t^p \p_{x'}^{\gamma} u\|_{r+\delta}'\right.\\ \left.
+ \|t^{p+1}\peso F_{(\gamma,p)}\|_{r+\delta}'\right]\triangleq I_1+I_2+I_3.
\end{multline}
\textit{Estimate for $I_1$}: if $|\gamma|\leq 2m$, take $k=2$ and $\delta=(R-r)/2$ in \eqref{generica}. Taking into account that $(p+1)!\leq N (p+|\gamma|)!$, \eqref{unadelasclaves} and Lemma \ref{Th:time_analyticity}, we obtain
\begin{equation*}
\begin{split}
I_1 &\leq N(p+2) \|t^p \pesodos \p_t^p\p_{x'}^\gamma u\|_{r+\delta}'\\
&\leq  N(p+2) \theta^{-b|\gamma|} \|t^{p+\frac{|\gamma|}{2m}}e^{-\frac{\theta}{4}t^{-\sigma}}D^{|\gamma|} \p_t^p u\|_{r+\delta}'\\
& \leq M \rho^{-p}\left[\rho\theta^b (R-r)\right]^{-|\gamma|} (p+|\gamma|)!\|u\|N\rho(R-r)^{-2m}.
\end{split}
\end{equation*}
In the previous chain of inequalities we used that
$$\|t^{p+\frac{|\gamma|}{2m}}e^{-\frac{\theta}{4}t^{-\sigma}}D^{|\gamma|} \p_t^p u\|_{r+\delta}'\leq M \|t^{p+\frac{|\gamma|}{2m}}e^{-\frac{\theta}{4}t^{-\sigma}}D^{|\gamma|} \p_t^p u\|$$
and applied Lemma \ref{Th:time_analyticity}.

If $|\gamma|>2m$, choose $k=|\gamma|$ and $\delta=(R-r)/|\gamma|$ in \eqref{generica}. There is a multi-index $\xi\in \N^n$ with $\xi_n=0$ such that $2m+|\xi|=|\gamma|$ and $|\p_t^p \p_{x'}^{\gamma}u|\leq |D^{2m} \p_t^p \p_{x'}^{\xi}u|$ and from \eqref{unadelasclaves}
\begin{equation}\label{E: laprimerisima2}
\begin{split}
I_1 & \leq N(p+|\gamma|)\|t^p e^{-\theta\frac{|\gamma|-1}{|\gamma|} t^{-\sigma}} \p_t^p\p_{x'}^\gamma u \|_{r+\delta}'\\
&=N(p+|\gamma|) \|t^{-1}e^{-\theta\frac{|\gamma|-1}{|\gamma|^2} t^{-\sigma}} t^{p+1} e^{-\theta\left(1-\frac{1}{|\gamma|}\right)^2 t^{-\sigma}} \p_t^p\p_{x'}^\gamma u \|_{r+\delta}'\\
& \leq N (p+|\gamma|)|\gamma|^{2m-1} \theta^{-(2m-1)} \|t^{p+1} e^{-\theta\left(1-\frac{1}{|\gamma|}\right)^2 t^{-\sigma}}D^{2m}\p_t^p\p_{x'}^{\xi} u \|_{r+\delta}'.
\end{split}
\end{equation}
We apply the induction hypothesis and proceed as in \eqref{otraayuda} using \eqref{amiga} to get that 
\begin{multline}\label{Elasrundisima}
\|t^{p+1} e^{-\theta\left(1-\frac{1}{|\gamma|}\right)^2 t^{-\sigma}} D^{2m}\p_t^p\p_{x'}^{\xi} u \|_{r+\delta}'\\
\leq N M \rho^{-p} \left[\rho \theta^b \left(R-r\right) \right]^{-|\gamma|+2m} (p+|\gamma|-2m+1)!\|u\|(R-r)^{-2m}.
\end{multline} 
From
\begin{equation*}
|\gamma|^{2m-1}(p+|\gamma|-2m+1)!(p+|\gamma|)\leq N (p+|\gamma|+1)!,
\end{equation*}
\eqref{E: laprimerisima2} and \eqref{Elasrundisima}
\begin{equation*}
I_1\le M \rho^{-p} \left[\rho \theta^b (R-r) \right]^{-|\gamma|}  (p+|\gamma|+1)!\|u\|N\rho(R-r)^{-2m}.
\end{equation*}
{\it Estimate for $I_2$}: For $|\gamma|\leq 2m$, we set $\delta=(R-r)/2$ and because $\theta$ and $R\le1$, Lemma \ref{Th:time_analyticity} shows that
\begin{equation*}
\begin{split}
I_2&\leq N(R-r)^{-2m} \|t^{p+\frac{|\gamma|}{2m}}\peso D^{|\gamma|} \p_t^p u\|_{r+\delta}' \\
&\leq N(R-r)^{-2m} M \rho^{-p} (p+1)! \|u\| \\
&\leq M \left[\rho\theta^b (R-r)\right]^{-|\gamma|} \rho^{-p} (|\gamma|+p+1)! \|u\|N\rho(R-r)^{-2m}.
\end{split}
\end{equation*}
If $|\gamma|> 2m$, we have already chosen $\delta=(R-r)/|\gamma|$ and there is $\xi\in \N^n$, with $\xi_n=0$, $2m+|\xi|=|\gamma|$ and  $|\p_t^p \p_{x'}^{\gamma}u|\leq |D^{2m}\p_t^p \p_{x'}^{\xi}u|$. By the induction hypothesis and taking into account that
\begin{equation*}
|\gamma|^{2m}(p+|\gamma|-2m+1)!\leq N (p+|\gamma|+1)!,
\end{equation*}
we get
\begin{equation*}
\begin{split}
I_2&\leq N (R-r)^{-2m} |\gamma|^{2m}\|t^{p+1} \peso D^{2m}\p_{x'}^{\xi} \p_t^p u\|_{r+\delta}' \\
&\leq N (R-r)^{-2m}|\gamma|^{2m} M \rho^{-p} \left[\theta^b \rho (R-r) \right]^{-(|\gamma|-2m)} (p+|\gamma|-2m+1)! \|u\| \\
&\leq  M \rho^{-p} \left[ \rho\theta^b(R-r)\right]^{-|\gamma|}  (p+|\gamma|+1)! \|u\|  N\rho(R-r)^{-2m}.
\end{split}
\end{equation*}
{\it Estimate for $I_3$}: by the induction hypothesis on multi-indices $(q,\beta)<(p,\gamma)$ and Lemma \ref{L:postleibniz} for $\N^{n+1}$,
\begin{equation*} 
\begin{split}
I_3&=\|t^{p+1} \peso F_{(\gamma,p)}\|_{r+\delta}' \\ 
&\leq N\sum_{|\alpha|\leq 2m} \sum_{\substack{(q,\beta)\\ <(p,\gamma)}} \binom{p}{q} \binom{\gamma}{\beta} \varrho^{-p+q -|\gamma|+|\beta|}(p-q+|\gamma|-|\beta|)!\\
&\times \|t^{p+\frac{|\alpha|}{2m}}\peso D^{|\alpha|} \p_t^q \p_{x'}^\beta u\|_{r+\delta}'\\
& \leq  NM  \left[\theta^b (R-r)\right]^{-|\gamma|} (p+|\gamma|)\|u\| (R-r)^{-2m}\\
& \times \sum_{\substack{(q,\beta)\\ <(p,\gamma)}} \binom{p}{q}\binom{\gamma}{\beta} (p-q+|\gamma-\beta|)! (q+|\beta|)! \varrho^{-p+q-|\gamma-\beta|}  \rho^{-q-|\beta|}   \\
&\leq  M   \rho^{-p}\left[\rho \theta^b (R-r)\right]^{-|\gamma|} (p+|\gamma|+1)!\|u\|(R-r)^{-2m}\frac{N\rho}{\varrho-\rho}.
\end{split}
\end{equation*}
Thus,
\begin{equation}\label{E: alli}
I_1+I_2+I_3 \leq M \rho^{-p}\left[\rho\theta^b (R-r)\right]^{-|\gamma|} (p+|\gamma|)!\|u\|N\rho(R-r)^{-2m},
\end{equation}
and Lemma \ref{Th:time_tan:analy} follows from \eqref{generica}, \eqref{E: alli}, Lemma \ref{Lema:interpolation} and the induction hypothesis for $(p-1,\gamma)$, when $\rho=\rho(\varrho,n,m)$ is small.
\end{proof}

Finally, Theorem \ref{lagrananaliticidad} follows from the embedding \cite{Friedman1}
\begin{equation*}
\|\varphi\|_{L^\infty(\Omega)}\le C(n) \sum_{|\alpha|\le \left[\frac{n}{2}\right]+1}\|D^{|\alpha|}\varphi\|_{L^2(\Omega)},\ \text{for}\ \varphi\in C^\infty(\overline\Omega),
\end{equation*}
the inequality
\begin{equation*}
\|f\|_{L^\infty(I)}\le |I|^{\frac12}\|f'\|_{L^2(I)}+|I|^{-\frac 12}\|f\|_{L^2(I)},\ \text{for}\ f\in C^1(I),
\end{equation*}
\color{black}
with $I$ an interval in $\R$ and Lemma \ref{Lem:normal_analy}.
\begin{lemma}\label{Lem:normal_analy} Let $0 <\theta\le 1$, $0<\frac{R}{2}<r<R\le1$ and $\mathcal L$ satisfy \eqref{coeff:space_analytic} and \eqref{coeff:time_analytic}. Then, there are $M=M(\varrho,n,m)$ and $\rho=\rho(\varrho,n,m)$, $0<\rho\le 1$, such that
\begin{multline}\label{normal_analy}
\|t^{p}\peso \p_t^{p} \p_n^l \p_{x'}^\gamma u\|_{r}'\\\leq M \rho^{-p-l}\left[\rho\,\theta^b  (R-r)\right]^{-l-|\gamma|}(p+l+|\gamma|+1)!\,\|u\|
\end{multline}
holds when $u$ is a solution to \eqref{problemaparabolico} and \eqref{problemaparabolicoboundary}. Here, $\p_n$ denotes differentiation with respect to the variable $x_n$.
\end{lemma}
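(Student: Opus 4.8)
The plan is to prove \eqref{normal_analy} by induction on the order $l$ of normal differentiation, regarding the inequality as a statement about all $p\in\N$, all tangential multi-indices $\gamma$ (that is, $\gamma_n=0$) and all $0<\theta\le1$ simultaneously, so that the induction is carried out in $l$ alone. The base cases are $0\le l\le 2m-1$: for $l=0$ inequality \eqref{normal_analy} is exactly the $k=0$ summand of Lemma~\ref{Th:time_tan:analy}, while for $1\le l\le 2m-1$ the $k=l$ summand of Lemma~\ref{Th:time_tan:analy} already controls $\|t^{p+\frac{l}{2m}}\peso\,\p_t^p\p_n^l\p_{x'}^\gamma u\|_r'$, and it only remains to trade the weight $t^{p+l/2m}\peso$ for the lighter weight $t^p\peso$ required here. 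For that I would write, with $K=p+l+|\gamma|+1$,
\[
t^p\peso=\bigl(t^{-\frac{l}{2m}}e^{-\frac{\theta}{K}t^{-\sigma}}\bigr)\bigl(t^{p+\frac{l}{2m}}e^{-(1-\frac1K)\theta t^{-\sigma}}\bigr),
\]
estimate the first factor by $NK^{bl}\theta^{-bl}$ using \eqref{unadelasclaves} (with uniform constant, since $l\le 2m-1$), and apply Lemma~\ref{Th:time_tan:analy} with $(1-\tfrac1K)\theta$ in place of $\theta$; since $|\gamma|\le K$ the factor $(1-\tfrac1K)^{-b|\gamma|}$ is bounded, the polynomial loss $K^{bl}$ is absorbed by the quotient $(p+l+|\gamma|+1)!/(p+|\gamma|+1)!$, and $\theta^{-bl}(R-r)^{-l}=\rho^{l}[\rho\theta^b(R-r)]^{-l}$, so \eqref{normal_analy} holds once $\rho$ is small.

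For the inductive step $l\ge 2m$ the idea is to use the equation. Choosing $\xi=e_n$ in \eqref{parabolicidad} gives $a_{2me_n}(x,t)\ge\varrho>0$ on $B_R^+\times[0,1]$, so the equation in \eqref{problemaparabolicoboundary} can be solved for the highest pure normal derivative,
\[
\p_n^{2m}u=-\,a_{2me_n}^{-1}\Bigl[(-1)^m\p_t u+\!\!\sum_{\substack{|\alpha|\le 2m\\ \alpha\ne 2me_n}}\!\! a_\alpha\p_x^\alpha u\Bigr],
\]
where the coefficients $b:=a_{2me_n}^{-1}$ and $\tilde a_\alpha:=a_\alpha a_{2me_n}^{-1}$ are analytic and, being reciprocals (respectively products of reciprocals) of analytic functions bounded away from zero, satisfy bounds of the type \eqref{coeff:space_analytic} and \eqref{coeff:time_analytic} with $\varrho$ replaced by a suitable $c=c(\varrho)>0$. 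Writing $l=2m+j$, applying $\p_t^p\p_n^{j}\p_{x'}^\gamma$ to this identity and expanding by Leibniz in $(t,x',x_n)$, every resulting term is a product of a derivative of $b$ or of some $\tilde a_\alpha$ with a factor $\p_t^{q}\p_n^{i'}\p_{x'}^{\beta'}u$ of \emph{strictly smaller} normal order $i'\le l-1$ (because $\alpha_n\le 2m-1$ whenever $\alpha\ne 2me_n$), with $q\le p$ — except for the term coming from $b\,\p_t u$, where $q\le p+1$ — and with total order $q+i'+|\beta'|\le p+l+|\gamma|$ (using $|\alpha|\le 2m$). For the terms with $q\le p$ one has $t^p\le t^q$ on $(0,1]$, so the induction hypothesis at normal order $i'\le l-1$ applies directly; summing over the Leibniz indices with Lemma~\ref{L:postleibniz}, the coefficient bounds get absorbed into the larger geometric rate $[\rho\theta^b(R-r)]^{-1}$, the $\rho$-powers leave an extra factor of at least $\rho$ per term, and one obtains the desired bound for this part as soon as $\rho<c(\varrho)$ is small.

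The only term that cannot be handled this way is $b\,\p_t^{p+1}\p_n^{i}\p_{x'}^\beta u$ with $i\le l-2m$ and $\beta\le\gamma$: it must be measured against $t^p\peso$, whereas the induction hypothesis only provides a bound against the heavier weight $t^{p+1}\peso$. I expect this to be the main obstacle, and it is genuinely new, since in Lemmas~\ref{Th:time_analyticity}--\ref{Th:time_tan:analy} the top-order normal derivatives were controlled by the $W^{2m,1}_2$ Schauder estimate rather than by solving the equation, so no surplus time derivative was produced. I would resolve it just as the weight mismatch was resolved above: with $K=p+l+|\gamma|+1$ write $t^p\peso=(t^{-1}e^{-\frac{\theta}{K}t^{-\sigma}})(t^{p+1}e^{-(1-\frac1K)\theta t^{-\sigma}})$, bound the first factor by $NK^{2m-1}\theta^{-(2m-1)}$ via \eqref{unadelasclaves} (here $1/\sigma=2m-1$), and apply the induction hypothesis at normal order $i\le l-1$ with $(1-\tfrac1K)\theta$ in place of $\theta$. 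The loss $(1-\tfrac1K)^{-b(i+|\beta|)}$ is bounded, the factor $K^{2m-1}$ is absorbed by the factorial quotient, and — the crucial point — since $i+|\beta|\le l+|\gamma|-2m$ and $\theta^{2mb}=\theta^{2m-1}$ one has
\[
\theta^{-(2m-1)}\,[\rho\theta^b(R-r)]^{-i-|\beta|}\le\rho^{2m}(R-r)^{2m}\,[\rho\theta^b(R-r)]^{-l-|\gamma|},
\]
so the $\theta$-power loss is exactly cancelled, while $\rho^{-(p+1)-i}\le\rho^{2m-1}\rho^{-p-l}$ still yields a spare power of $\rho$. Adding the estimates of all the Leibniz terms and choosing $\rho=\rho(\varrho,n,m)$ small then closes the induction. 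Everything else is the pattern of Leibniz expansion, weighted $L^2$ control and factorial accounting already used in the earlier lemmas, and, as recalled in the paragraph preceding the statement, Theorem~\ref{lagrananaliticidad} follows from \eqref{normal_analy} (taking, say, $\theta=\rho$) together with the Sobolev and one-dimensional embeddings.
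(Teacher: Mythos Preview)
Your approach is correct and in fact organizationally cleaner than the paper's, but it differs in two linked respects. You run a single induction on the normal order $l$ (over all $p$, all tangential $\gamma$, and all $0<\theta\le1$ at once), whereas the paper uses a \emph{double} induction, first on the weighted total order $2mp+l+|\gamma|$ and then, within each level, on $l$. The reason this matters is the decomposition. You solve the undifferentiated equation for $\partial_n^{2m}u$ first and only then apply $\partial_t^p\partial_n^{l-2m}\partial_{x'}^\gamma$; since every surviving $\alpha$ has $\alpha_n\le 2m-1$, every Leibniz term has normal order at most $l-1$ and the single induction closes --- at the price of needing analyticity bounds for the composite coefficients $a_{2me_n}^{-1}$ and $a_\alpha a_{2me_n}^{-1}$, a standard but additional fact. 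The paper instead differentiates the equation first (obtaining \eqref{eq:diff:normal} with commutator $F_{(p,l-2m+1,\gamma)}$) and only then isolates the top normal derivative; this keeps the original coefficients, but inside $F$ there remain terms with $\alpha=2me_n$ whose normal order can equal $l+1$, and it is precisely to absorb those that the outer induction on $2mp+l+|\gamma|$ is needed. Both routes treat the surplus time derivative (your ``main obstacle'', the paper's $H_1$) by the same weight-splitting via \eqref{unadelasclaves}. One small imprecision in your write-up: the terms requiring the weight trick are not only $b\,\partial_t^{p+1}\partial_n^i\partial_{x'}^\beta u$ with undifferentiated $b$, but all Leibniz terms with $q=p$, carrying the factor $\partial_n^{j-i}\partial_{x'}^{\gamma-\beta}b$; your fix handles them all equally.
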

\begin{proof} A solution to \eqref{problemaparabolicoboundary} satisfies
\begin{equation}\label{eq:diff:normal}
\p_t^{p+1}\p_n^l\p_{x'}^{\gamma}  u+\mathcal L\p_t^{p} \p_n^l \p_{x'}^{\gamma}  u=F_{(p,l,\gamma)},\ \text{in}\ B_R^+\times(0,1],
\end{equation}
with
\begin{equation*}
F_{(p,l,\gamma)}=(-1)^{m+1}\sum_{|\alpha|\leq 2m} \sum_{\substack{(q,j,\beta)\\<(p,l,\gamma)}} \binom{p}{q} \binom{l}{j} \binom{\gamma}{\beta} \p_t^{p-q} \p_n^{l-j} \p_{x'}^{\gamma-\beta} a_{\alpha} \p_t^{q} \p_n^{j} \p_{x'}^{\beta}  \p_x^\alpha u.
\end{equation*}

Because of \eqref{parabolicidad}, $a_{2m e_n}\geq \varrho>0$ in $\Omega\times [0,1]$, and one can solve for $\p_t^{p} \p_n^{l+2m} \p_{x'}^{\gamma}u$ in \eqref{eq:diff:normal}. Substituting into that formula $l$ by $l-2m+1$, when $l\geq 2m$, we have
\begin{multline}\label{despeje}
|\p_t^{p} \p_n^{l+1}\p_{x'}^{\gamma}u|\leq \frac{1}{|a_{2me_n}|}\left[|\p_{t}^{p+1} \p_n^{l-2m+1} \p_{x'}^\gamma    u|+|F_{(p,l-2m+1,\gamma)}|\right]\\+\frac{1}{|a_{2me_n}|}\sum_{\substack{|\alpha|\leq 2m\\ 
\alpha_n\leq 2m-1}} \left\|a_{\alpha} \right\|_{L^\infty(\Omega\times (0,1))}|\p_t^p\p_n^{l-2m+1} \p_{x'}^{\gamma}  \p_{x}^{\alpha}u|.
\end{multline}
We prove \eqref{normal_analy} by induction on the quantity $2mp+l+|\gamma|$ with $M$ the same constant as in Lemma \ref{Th:time_tan:analy}. If $2mp+l+|\gamma|\leq 2m$, then $l\leq 2m$ and \eqref{unadelasclaves} and Lemma  \ref{Th:time_tan:analy} show that
\begin{equation*}
\begin{split}
&\|t^{p}\peso \p_t^{p} \p_n^l\p_{x'}^\gamma u\|_{r}'\le\|t^{-\frac l{2m}}e^{-\frac{\theta}{1+|\gamma|}t^{-\sigma}} t^{p+\frac{l}{2m}} e^{-\frac{\theta|\gamma|}{1+|\gamma|}t^{-\sigma}} D^l\p_t^{p}\p_{x'}^\gamma u\|_{r}'\\
&\le N\theta^{-lb}\left(1+|\gamma|\right)^{\frac{(2m-1)l}{2m}}\|t^{p+\frac{l}{2m}} e^{-\frac{\theta|\gamma|}{1+|\gamma|}t^{-\sigma}} D^l\p_t^{p}\p_{x'}^\gamma u\|_{r}'\\
&\leq N\theta^{-lb}\left(1+|\gamma|\right)^l \left(R-r\right)^{-l}M \rho^{-p}\left[\rho\theta^b  (R-r)\right]^{-|\gamma|}(p+|\gamma|+1)!\,\|u\|\\
&\leq M \rho^{-p-l}\left[\rho\theta^b  (R-r)\right]^{-l-|\gamma|}(p+l+|\gamma|+1)!\,\|u\|N\rho^{2l},
\end{split}
\end{equation*}
where the last inequality holds because
\begin{equation*}
\left(1+|\gamma|\right)^{l}\left(p+|\gamma|+1\right)!\le N\left(p+l+|\gamma|+1\right)!\, .
\end{equation*}
Also, \eqref{normal_analy} holds when $l=0$ from Lemma \ref{Th:time_tan:analy}. Thus, \eqref{normal_analy} holds, when $2mp+l+|\gamma|\leq 2m$ and  $l\le 2m$, provided that $\rho$ is small.  

Assume now that \eqref{normal_analy} holds when $2mp+l+|\gamma|\leq k$, for some fixed $k\ge 2m$ and we shall prove it holds for $2mp+l+|\gamma|=k+1$. 

In the same way as for the case $k=2m$, Lemma \ref{Th:time_tan:analy} shows that \eqref{normal_analy} holds, when $2mp+l+|\gamma|=k+1$ and $l\leq 2m$, provided that $\rho$ is small. So, let us now assume that \eqref{normal_analy} holds for $2mp+j+|\gamma|=k+1$ and $j=0,\ldots,l$, for some $l\geq 2m$ and prove that it holds for $2mp+j+|\gamma|=k+1$ with $j=l+1$. Let then $\gamma$ and $p$ be such that $2mp+\left(l+1\right)+|\gamma|=k+1$. From \eqref{despeje} and because $a_{2me_n}\geq \varrho$,  we obtain
\begin{equation*}
\begin{split}
\|t^{p}\peso & \p_t^{p} \p_n^{l+1}\p_{x'}^{\gamma} u\|_r'\\
& \leq \varrho^{-1} \left[ \|t^{p} \peso \p_{t}^{p+1} \p_n^{l-2m+1} \p_{x'}^\gamma    u\|_r'+\|t^{p}\peso F_{(p,l-2m+1,\gamma)}\|_r'\right]\\
&+\varrho^{-1} \sum_{\substack{|\alpha|\leq 2m\\ \alpha_n\leq 2m-1}} \left\|a_{\alpha} \right\|_{L^\infty(Q)}\|t^{p}\peso \p_t^p \p_n^{l-2m+1} \p_{x'}^\gamma  \p_{x}^{\alpha}u\|_r'\\
&\triangleq H_1+H_2+H_3.
\end{split}
\end{equation*}
{\it Estimate for $H_1$}: the multi-indices involved in this term satisfy 
\begin{equation*}
2m(p+1)+l-2m+1+|\gamma|=k+1
\end{equation*}
and the total number of $x_n$ derivatives involved is less or equal than $l$. From the induction hypothesis and \eqref{unadelasclaves}, we can estimate $H_1$ as follows
\begin{equation*}
\begin{split}
\|t^{p}&\peso \p_t^{p+1} \p_n^{l-2m+1}  \p_{x'}^{\gamma}u \|_r' \\
&=\|t^{-1}e^{-\frac{\theta}{l+|\gamma|}t^{-\sigma}} t^{p+1}e^{-\theta\frac{l+|\gamma|-1}{l+|\gamma|}t^{-\sigma}} \p_t^{p+1} \p_n^{l-2m+1}  \p_{x'}^{\gamma}u \|_r'\\
&\leq N \theta^{-(2m-1)}(l+|\gamma|)^{2m-1}\|t^{p+1} e^{-\theta\frac{l+|\gamma|-1}{l+|\gamma|}t^{-\sigma}} \p_t^{p+1} \p_n^{l-2m+1} \p_{x'}^\gamma  u \|_r'\\
& \leq N \theta^{-(2m-1)} (l+|\gamma|)^{2m-1} M \rho^{-p-1-(l-2m+1)} \left[ \rho\theta^b (R-r)\right]^{-(l-2m+1)-|\gamma|} \\
& \times (p+l-2m+|\gamma|+3)!\, \|u\|\\
& \leq  M \rho^{-p-(l+1)} \left[\rho\theta^b (R-r)\right]^{-(l+1)-|\gamma|} \\
&\times (p+l+|\gamma|+2)!\, \|u\| N\rho^{4m-1},
\end{split}
\end{equation*}
where the last inequality holds because
\begin{equation*}
\left(l+|\gamma|\right)^{2m-1}\left(p+l-2m+|\gamma|+3\right)!\le N\left(p+l+|\gamma|+2\right)!\, ,
\end{equation*}
when $p+l+|\gamma|+2\ge 2m$. Thus, 
\begin{multline}\label{E: acotacion H1}
H_1\leq M \rho^{-p-(l+1)} \left[ \rho\theta^b (R-r)\right]^{-(l+1)-|\gamma|}\\
 \times (p+(l+1)+|\gamma|+1)!\, \|u\| N \rho^{4m-1}.
\end{multline}
{\it Estimate for $H_2$}:  we expand this term and obtain
\begin{equation} \label{6}
\begin{split}
H_2&\leq N\sum_{|\alpha|\leq2m} \sum_{\substack{(q,j,\beta)\\<(p,l-2m+1,\gamma)}} \binom{p}{q}\binom{l-2m+1}{j}\binom{\gamma}{\beta}\\
& \times \varrho^{-1-(p-q)-(l-2m+1-j)-|\gamma-\beta|} (p-q+l-2m+1-j+|\gamma-\beta|)!\\
&\times \|t^{q}\peso \p_t^{q} \p_n^j \p_{x'}^{\beta}  \p_{x}^{\alpha}u\|_r.
\end{split}
\end{equation}
The multi-indices involved in the derivatives of $u$ that appear in \eqref{6} satisfy $2mq+j+|\alpha|+|\beta|<2mp+l+1+|\gamma|=k+1$ and we already know how to control these derivatives by the first induction hypothesis. In fact, if we write $\alpha=(\alpha',\alpha_n)$ and because $\alpha_n$ is related to normal derivatives, we get
\begin{multline} \label{7}
\|t^{q}\peso \p_t^{q}\p_n^j \p_{x'}^{\beta}  \p_{x}^{\alpha}u\|_r' \\
\leq M \rho^{-q-j-\alpha_n}\left[\rho\theta^{b}  (R-r)\right]^{-j-|\beta|-|\alpha|}(q+j+|\beta|+|\alpha|+1)!\,\|u\|.
\end{multline}
The sum in \eqref{6}  runs over $\{(q,j,\beta)<(p,l-2m+1,\gamma)\}$ and $|\alpha|\leq 2m$ and inside the sum \eqref{6}, $j+\alpha_n+|\alpha|\le l+2m+1$, $j+|\beta|+|\alpha|\le l+1+|\gamma|$ and $q+j+|\beta|\leq p+l-2m+|\gamma|$. Also,
\begin{equation*}
\frac{\left(q+j+|\beta|+|\alpha|+1\right)!}{\left(q+j+|\beta|\right)!}\le \frac{\left(p+l+|\gamma|+1\right)!}{\left(p+l-2m+|\gamma|\right)!}\ .
\end{equation*}
These and \eqref{7} show that for all such $(q,j,\beta)$ and $\alpha$
\begin{multline} \label{71}
\|t^{q}\peso  \p_t^{q}\p_n^j \p_{x'}^{\beta} \p_{x}^{\alpha}u\|_r' 
\leq M \rho^{-l-2m-1}\left[\theta^b (R-r)\right]^{-l-1-|\gamma|} \rho^{-q-j-|\beta|}\\
\times \frac{(p+l+|\gamma|+1)!}{(p+l-2m+|\gamma|)!}(q+j+|\beta|)!\,\|u\|.
\end{multline}
Plugging \eqref{71} into \eqref{6} yields
\begin{equation}\label{72}
\begin{split}
H_2&\leq NM \rho^{-l-2m-1}\left[\theta^b(R-r)\right]^{-l-1-|\gamma|}\frac{(p+l+|\gamma|+1)!}{(p+l-2m+|\gamma|)!}\,  \|u\| \\
&\times  \sum_{\substack{(q,j,\beta)\\<(p,l-2m+1,\gamma)}} \binom{p}{q}\binom{l-2m+1}{j} \binom{\gamma}{\beta} \\
&\times (p-q+l-2m+1-j+|\gamma-\beta|)!(q+j+|\beta|)!\\
&\times \varrho^{-(p-q)-(l-2m+1-j)-|\gamma-\beta|}\rho^{-q-j-|\beta|}
\end{split}
\end{equation}
and Lemma \ref{L:postleibniz} shows that the above sum is bounded by
\begin{equation*}
\rho^{-p-l+2m-1-|\gamma|}(p+l-2m+1+|\gamma|)!\frac{\rho}{\varrho-\rho}\, .
\end{equation*}
The later and \eqref{72} imply
\begin{multline}\label{E: acotacion H2}
H_2
\leq M \rho^{-p-(l+1)}\left[\rho\theta^b(R-r)\right]^{-(l+1)-|\gamma|}(p+(l+1)+|\gamma|+1)!\,\|u\|\,\frac{N\rho}{\varrho-\rho}\, .
\end{multline}
{\it Estimate for $H_3$}: the multi-indices involved in the sum run over 
\begin{equation*}
\{\alpha: |\alpha|\le 2m: \alpha_n\le 2m-1\},
\end{equation*}
the multi-indices involved in the derivatives of $u$ which appear in $H_3$ satisfy 
\begin{equation*}
2mp+\left(l-2m+1+\alpha_n\right)+|\gamma|+|\alpha'|\le k+1,
\end{equation*} 
with a total number of $x_n$ derivatives equal to $\alpha_n+l-2m+1\leq l$, so we are within previous steps of the induction process and $0<\rho<1$. Accordingly, applying the second induction hypothesis one gets
\begin{equation}\label{E: acotacion H3}
H_3\leq  M  \rho^{-p-(l+1)}\left[\theta^b \rho (R-r) \right]^{-(l+1)-|\gamma|}\times (p+(l+1)+|\gamma|+1)!\,\|u\|N\rho.
\end{equation}
Now, \eqref{normal_analy} when $2mp+\left(l+1\right)+|\gamma|=k+1$, follows from \eqref{E: acotacion H1}, \eqref{E: acotacion H2} and \eqref{E: acotacion H3}, when $\rho=\rho(\varrho,n,m)$ is chosen small.
\end{proof}
\begin{remark}\label{R: alguno}
Choosing $\theta=t^\sigma$ in Lemma \ref {Lem:normal_analy}, one  recovers \eqref{E: kimalito}.
\end{remark}
Next we give a proof of the claim in the second paragraph in Remark \ref{R:1}. In Lemma \ref{Lem:tan_time:gevrey} we give details only for the interior case. Lemma \ref{Lem:tan_time:gevrey}  also holds near the boundary when the boundary is flat and for tangential derivatives $\gamma\in\N^n$ with $\gamma_n=0$. Then, as in Lemma \ref{Lem:normal_analy}, one can extend the result to all the derivatives by showing that there are $M=M(\varrho, n, m)$ and $\rho=\rho(\varrho,n,m)$, $0<\rho\le 1$, such that
\begin{equation*}
\|t\peso \p_t^{p} \p_n^l \p_{x'}^\gamma u\|_{r}'\\\leq M \rho^{-l} \left[\rho\theta^{b} (R-r)\right]^{-2mp-|\gamma|-l}(2mp+|\gamma|+l)!\|u\|_R'
\end{equation*}
when $u$ satisfies \eqref{problemaparabolicoboundary}, $\frac R2<r<R$ and (1.10) holds over $B_R(x_0)\cap\overline\Omega$.

\begin{lemma}\label{Lem:tan_time:gevrey} Let $0<\theta\le 1$, $0<\frac{R}{2}<r<R\le 1$ and $\mathcal L$ satisfy \eqref{coeff:space_analytic}. Then there are $M=M(\varrho, n, m)$ and $\rho=\rho(\varrho,n,m)$, 
$0<\rho\le 1$, such that for any $\gamma\in \N^n$ and $p\in \N$,
\begin{multline} \label{tan_time:gevrey}
(R-r)^{2m}\|t\peso \p_t^{p+1}\p_{x}^\gamma u\|_r+\sum_{k=0}^{2m}(R-r)^k\|t^{\frac{k}{2m}}\peso D^k \p_t^p\p_{x}^\gamma  u\|_{r}\\
\leq M \left[\rho \theta^{b} (R-r)\right]^{-2mp-|\gamma|} (2mp+|\gamma|)!\|u\|_R
\end{multline}
holds when $u$ in $C^\infty(B_R\times [0,1])$ satisfies $\p_t u+ \mathcal Lu=0$ in $B_R\times [0,1]$.
\end{lemma}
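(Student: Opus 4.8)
The plan is to prove \eqref{tan_time:gevrey} by induction on the integer $2mp+|\gamma|$, following the scheme of Lemma \ref{Lem:interior} and Lemma \ref{Th:time_tan:analy} but replacing the analytic bookkeeping by the Gevrey one in which each time derivative is charged $2m$ spatial units. The case $p=0$ with arbitrary $\gamma\in\N^n$ is exactly Lemma \ref{Lem:interior}. For the inductive step assume \eqref{tan_time:gevrey} holds for all pairs $(p',\gamma')$ with $2mp'+|\gamma'|<2mp+|\gamma|$ and $p\ge 1$. Differentiating $\p_tu+\mathcal Lu=0$ by $\p_t^p\p_x^\gamma$ gives $\p_t^{p+1}\p_x^\gamma u+\mathcal L\p_t^p\p_x^\gamma u=F_{(p,\gamma)}$ in $B_R\times(0,1]$, with
\[
F_{(p,\gamma)}=(-1)^{m+1}\sum_{|\alpha|\le 2m}\ \sum_{(q,\beta)<(p,\gamma)}\binom{p}{q}\binom{\gamma}{\beta}\,\p_t^{p-q}\p_x^{\gamma-\beta}a_\alpha\,\p_t^q\p_x^{\beta}\p_x^\alpha u,
\]
and I would apply the weighted $L^2$ estimate of Lemma \ref{Lema:regularidad4} (with its $p$-parameter equal to $0$, as in Lemma \ref{Lem:interior}) to $v=\p_t^p\p_x^\gamma u$, choosing $k\approx 2mp+|\gamma|$ and $\delta=(R-r)/k$, to bound $\|t\peso\p_tv\|_r+\|t\peso D^{2m}v\|_r$ by $I_1+I_2+I_3$ as in \eqref{formulote:tan}.

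The terms $I_1$ and $I_2$ contain $v$ itself and must be reduced to strictly lower level. If $|\gamma|\ge 2m$, I would split $\gamma=\eta+\xi$ with $|\eta|=2m$, so that $|v|\le|D^{2m}\p_t^p\p_x^\xi u|$ with $2mp+|\xi|<2mp+|\gamma|$, and proceed verbatim as in \eqref{unaayuda}, \eqref{otraayuda} and \eqref{amiga}: the weight $e^{-\frac{k-1}{k}\theta t^{-\sigma}}$ is split as $t^{-1}e^{-\varepsilon\theta t^{-\sigma}}\cdot te^{-(1-1/k)^2\theta t^{-\sigma}}$, \eqref{unadelasclaves} turns $t^{-1}e^{-\varepsilon\theta t^{-\sigma}}$ into $N\theta^{-(2m-1)}k^{2m-1}$, and the remaining norm is estimated by the induction hypothesis at $(p,\xi)$, with $\theta$ replaced by $(1-1/k)^2\theta$ and $r$ by $r+\delta$ (the factors $(1-1/k)^{-\cdots}$ being absorbed by \eqref{amiga}). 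If instead $|\gamma|<2m$, so that no spatial derivative can be extracted from $\p_x^\gamma$ (this also covers $\gamma=0$, in particular the case $p\ge 1$, $\gamma=0$ of the statement), I would rewrite $v=\p_t^{p-1}\p_x^\gamma(\p_tu)=-\p_t^{p-1}\p_x^\gamma(\mathcal Lu)$ and expand by Leibniz; this trades one time derivative for at most $2m$ spatial ones, so that each resulting term $\p_t^q\p_x^{\beta}\p_x^\alpha u$ ($q\le p-1$, $\beta\le\gamma$, $|\alpha|\le 2m$) sits at level $2mq+|\beta|+|\alpha|\le 2mp+|\gamma|$, with equality only for the top term $\p_t^{p-1}D^{2m}\p_x^\gamma u$, which is controlled by the $D^{2m}$-norm appearing on the left of \eqref{tan_time:gevrey} at the \emph{strictly lower} pair $(p-1,\gamma)$, while the coefficient factors $\p_t^{p-1-q}\p_x^{\gamma-\beta}a_\alpha$ are bounded by \eqref{coeff:space_analytic}. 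The bookkeeping closes because the factor $k^{2m}$ (the $Nk$ from Lemma \ref{Lema:regularidad4} times the $k^{2m-1}$ from the $t^{-1}$-weight) is absorbed by the ratio $(2mp+|\gamma|)!/(2m(p-1)+|\gamma|)!\asymp(2mp+|\gamma|)^{2m}$, the identity $\theta^{-(2m-1)}=\theta^{-2mb}$ supplies exactly the $2m$ extra copies of $\theta^{-b}$ needed to pass from level $2m(p-1)+|\gamma|$ to level $2mp+|\gamma|$ in $[\rho\theta^b(R-r)]^{-2mp-|\gamma|}$, and the leftover powers combine into a small factor $N\rho^{2m}\varrho^{-1}(R-r)^{-2m}$.

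The term $I_3=N\|t\peso F_{(p,\gamma)}\|_{r+\delta}$ is handled exactly as the term $I_3$ in Lemma \ref{Lem:interior}: bound the coefficient factors by \eqref{coeff:space_analytic}, apply the induction hypothesis at the pairs $(q,\beta)<(p,\gamma)$ to the factors $\p_t^q\p_x^\beta\p_x^\alpha u$ (using that the left side of \eqref{tan_time:gevrey} at $(q,\beta)$ controls $\|t^{|\alpha|/2m}\peso D^{|\alpha|}\p_t^q\p_x^\beta u\|_r$ with weight $(R-r)^{|\alpha|}$ for $|\alpha|\le 2m$; the only borderline case, $|\alpha|=2m$ and $(q,\beta)=(p-1,\gamma)$, being again the $D^{2m}$-term at $(p-1,\gamma)$), and then sum by the combinatorial Lemma \ref{L:postleibniz} for $\N^{n+1}$, which yields the factor $N\rho/(\varrho-\rho)$. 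Adding the three bounds gives $\|t\peso\p_tv\|_r+\|t\peso D^{2m}v\|_r\le M[\rho\theta^b(R-r)]^{-2mp-|\gamma|}(2mp+|\gamma|)!\,\|u\|_R\,(R-r)^{-2m}N\rho\bigl(1+(\varrho-\rho)^{-1}\bigr)$, and then, exactly as at the end of the proof of Lemma \ref{Lem:interior}, Lemma \ref{Lema:interpolation} together with a device like \eqref{E: segundocaso} (and the lower-level induction on $2mp+|\gamma|$) fills in the norms $(R-r)^{2m}\|t\peso\p_t^{p+1}\p_x^\gamma u\|_r$ and $\sum_{k=0}^{2m}(R-r)^k\|t^{k/2m}\peso D^k\p_t^p\p_x^\gamma u\|_r$, absorbing the $(R-r)^{-2m}$; the desired inequality then follows for $\rho=\rho(\varrho,n,m)$ sufficiently small.

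I expect the main obstacle to be precisely the reduction of $\p_t^p\p_x^\gamma u$ in $I_1$ and $I_2$ when $1\le|\gamma|<2m$ and $p\ge 1$: there is no spatial derivative left in $\p_x^\gamma$ to peel off, one is forced to convert a time derivative into $2m$ spatial derivatives through the equation, and one must verify that this conversion is exactly compatible with the $(2mp+|\gamma|)!$ growth of the target (rather than the $(p+|\gamma|)!$ of the time-analytic Lemma \ref{Th:time_tan:analy}) and that enough slack in $\rho$ survives. Finally, the boundary and normal-derivative refinements announced just before the statement are obtained from this lemma exactly as Lemma \ref{Th:tan_analy} and Lemma \ref{Lem:normal_analy} are deduced from Lemma \ref{Lem:interior}.
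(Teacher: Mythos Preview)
Your proposal is correct in outline, but the paper's proof is considerably simpler than yours, and the simplification bypasses precisely the ``main obstacle'' you flag.

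The paper proceeds by induction on $p$ (with an inner induction on $|\gamma|$ needed only for $I_3$), taking $k=p+|\gamma|+1$ and $\delta=(R-r)/(p+|\gamma|+1)$ in Lemma~\ref{Lema:regularidad4}. The crucial observation you are missing is that the quantity $\|t\,e^{-\theta' t^{-\sigma}}\p_t^p\p_x^\gamma u\|_{r+\delta}$ appearing inside both $I_1$ and $I_2$ after the $t^{-1}$-trick is \emph{already} the first term on the left-hand side of \eqref{tan_time:gevrey} at the pair $(p-1,\gamma)$, since $\p_t^p\p_x^\gamma u=\p_t(\p_t^{p-1}\p_x^\gamma u)$. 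Hence the induction hypothesis at $(p-1,\gamma)$ (with $\theta$ replaced by $\left(\tfrac{p+|\gamma|}{p+|\gamma|+1}\right)^{2}\theta$ and $r$ by $r+\delta$) applies directly, uniformly in $|\gamma|$, with no need to peel off spatial derivatives or to convert a time derivative through the equation. The paper's choice $k=p+|\gamma|+1$ (rather than your $k\approx 2mp+|\gamma|$) is enough because the combined factor $(p+|\gamma|)^{2m}$ is dominated by $(2mp+|\gamma|)!/(2m(p-1)+|\gamma|)!$, and the adjustment factors $(1+1/(p+|\gamma|))^{O(2mp+|\gamma|)}$ stay bounded by $e^{O(m)}$.

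So your case split $|\gamma|\ge 2m$ versus $|\gamma|<2m$, and in particular the use of $\p_t u=-\mathcal L u$ to trade a time derivative for $2m$ spatial ones in the latter, is a valid alternative but unnecessary: both cases collapse into the single line ``apply \eqref{tan_time:gevrey} at $(p-1,\gamma)$.'' This also explains why, contrary to what your discussion of the obstacle anticipates, no extra slack in $\rho$ beyond $N\rho(1+(\varrho-\rho)^{-1})$ is required. Your treatment of $I_3$ via Lemma~\ref{L:postleibniz} in $\N^{n+1}$ and the closing appeal to Lemma~\ref{Lema:interpolation} match the paper's.
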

\begin{proof}
We prove \eqref{tan_time:gevrey} by induction on $p$ and then by induction on $|\gamma|$. When $p=0$, \eqref{tan_time:gevrey} is the estimate in Lemma \ref{Lem:interior}. Assume \eqref{tan_time:gevrey} holds up to $p-1$ for some $p\ge 1$. Then,
\begin{equation*}
\p_t^{p+1} \p_{x}^\gamma u+ \mathcal L\p_t^{p} \p_{x}^\gamma u=F_{\gamma,p},\ \text{in}\ B_R\times (0,1],\\
\end{equation*}
with
\begin{equation*}
F_{\gamma,p}=(-1)^{m+1}\sum_{|\alpha|\leq 2m} \sum_{\substack{(q,\beta)\\ <(p,\gamma)}} \binom{p}{q} \binom{\gamma}{\beta}  \p_t^{p-q}\p_{x}^{\gamma-\beta}a_\alpha  \p_t^q\p_{x}^{\beta}\p_x^{\alpha}u.
\end{equation*}
Apply the weighted $L^2$ estimate in Lemma \ref{Lema:regularidad4} with $p=0$, $k=p+|\gamma|+1$ and $\delta=(R-r)/(p+|\gamma|+1)$ to $\p_t^{p} \p_{x}^\gamma u$. It gives,
\begin{equation*} 
\begin{split}
&\|t \peso \p_t^{p+1} \p_x^\gamma u \|_r+\|t \peso D^{2m} \p_t^p \p_x^\gamma u \|_r \leq\\
& N \left[(|\gamma|+p)\| e^{-\theta \frac{|\gamma|+p}{|\gamma|+p+1}t^{-\sigma}} \p_t^{p} \p_x^\gamma u \|_{r+\delta}
+\frac{(|\gamma|+p+1)^{2m}}{(R-r)^{2m}}\|t e^{-\theta t^{-\sigma}} \p_t^{p} \p_x^\gamma u \|_{r+\delta}\right.\\ 
&\left.+\|t \peso F_{\gamma,p} \|_{r+\delta} \right]\triangleq I_1+I_2+I_3.
\end{split}
\end{equation*}
{\it Estimate for $I_1$}:  by induction hypothesis for $p-1$ and \eqref{unadelasclaves}
\begin{equation*}
\begin{split}
&\|e^{-\theta \frac{|\gamma|+p}{|\gamma|+p+1}t^{-\sigma}} \p_t^p \p_x^\gamma u \|_{r+\delta}
=\|t^{-1}e^{-\theta \frac{|\gamma|+p}{(|\gamma|+p+1)^2}t^{-\sigma}} t e^{-\theta \left(\frac{|\gamma|+p}{|\gamma|+p+1}\right)^2 t^{-\sigma}} \p_t^p \p_x^\gamma u \|_{r+\delta}\\
&\leq N  \theta^{-2mb}(|\gamma|+p)^{2m-1} \|t e^{-\theta \left(\frac{|\gamma|+p}{|\gamma|+p+1}\right)^2 t^{-\sigma}} \p_t^p \p_x^\gamma u \|_{r+\delta}\\
&\leq N \theta^{-2mb}(|\gamma|+p)^{2m-1} M \left[\rho\theta^b (R-r) \right]^{-2m(p-1)-|\gamma|}(2m(p-1)+|\gamma|)!\\
&\times \left(1+\frac{1}{|\gamma|+p}\right)^{(6m-2)(p+|\gamma|)}\|u\|_R(R-r)^{-2m}\\
&\leq M \left[\rho\theta^b (R-r) \right]^{-2mp-|\gamma|}(|\gamma|+p)^{2m-1}(2m(p-1)+|\gamma|)!\|u\|_R N\rho.
\end{split}
\end{equation*}
This and $(|\gamma|+p)^{2m}(2m(p-1)+|\gamma|)!\leq N(2mp+|\gamma|)!$, give
\begin{equation*}
I_1\leq M \left[\rho\theta^b (R-r) \right]^{-2mp-|\gamma|}(2mp+|\gamma|)!\|u\|_RN\rho(R-r)^{-2m}.
\end{equation*}
{\it Estimate for $I_2$}: by induction hypothesis for $p-1$
\begin{equation*}
\|t \peso \p_t^p \p_x^\gamma u \|_{r+\delta} \leq M \left[\rho\theta^b (R-r)\right]^{-2mp-|\gamma|}\left(2m(p-1)+|\gamma|\right)! \|u\|_R  N\rho
\end{equation*}
and
\begin{equation*}
I_2\leq M\left[\rho\theta^b (R-r)\right]^{-2mp-|\gamma|}\left(2mp+|\gamma|\right)!\|u\|_RN\rho(R-r)^{-2m}.
\end{equation*}
{\it Estimate for $I_3$}: by induction on $(q,\beta)<(p,\gamma)$ and Lemma \ref{L:postleibniz} for $\N^{n+1}$
\begin{equation*}
\begin{split}
&\|t\peso F_{\gamma,p}\|_{r+\delta}\\
&\leq N\sum_{|\alpha|\leq 2m} \sum_{\substack{(q,\beta)\\ <(p,\gamma)}} \binom{p}{q}\binom{\gamma}{\beta} \varrho^{-p+q-|\gamma-\beta|}(p-q+|\gamma|-|\beta|)!\\
&\times \|t^{|\alpha|}{2m}\peso D^{|\alpha|}\p_t^q \p_x^\beta  u\|_{r+\delta}\\
&\le NM\left[\theta^b(R-r)\right]^{-2mp-|\gamma|} \frac{(2mp+|\gamma|)!}{(p+|\gamma|)!}\rho^{-(2m-1)p}\|u\|_R(R-r)^{-2m}\\
&\times \sum_{\substack{(q,\beta)\\ <(p,\gamma)}}\binom pq\binom \gamma\beta(p-q+|\gamma|-|\beta|)!(q+|\beta|)!\varrho^{-p+q-|\gamma|+|\beta|}\rho^{-q-|\beta|}\\
&\leq M [\rho\theta^b(R-r)]^{-2mp-|\gamma|}(2mp+|\gamma|)!\|u\|_R(R-r)^{-2m}\frac{N\rho}{\varrho-\rho}.
\end{split}
\end{equation*}
Hence
\begin{equation}\label{E:casicaso}
\begin{split}
I_1+I_2+I_3 &\leq M [\rho\theta^b (R-r)]^{-2mp-|\gamma|}(2mp+|\gamma|)!\|u\|_R(R-r)^{-2m}\frac{N\rho}{\varrho-\rho}.
\end{split}
\end{equation}
Lemma \ref{Lema:interpolation}, the induction hypothesis and \eqref{E:casicaso} finish the proof.
\end{proof}

Here we describe the counterexample alluded at the end of Remark \ref{R:1}: let $\omega\subset \Omega$ be an open set and $\varphi\in C_0^\infty(\omega)$, $0\le\varphi\le 1$, with $\varphi\equiv 1$ somewhere in $\omega$.  Define 
\begin{equation*}
V(x,t)=
\begin{cases}
\varphi(x)e^{-\frac{1}{2t-1}},\ &t>\frac12,\\
0,\ & t\le \frac 12,
\end{cases}
\end{equation*}
which is identically zero outside $\omega$ for all times and not time-analytic inside $\omega\times\{\frac 12\}$. Let $u$ be the solution to
\begin{equation*}
\begin{cases}
\p_t u-\Delta u+V(x,t)u=0,\ &\text{in}\ \Omega \times (0,1],\\
u=0,\ &\text{on}\ \p \Omega \times (0,1],\\
u(0)=u_0, &\text{in }\Omega,
\end{cases}
\end{equation*}
with $u_0$ in $C_0^\infty(\Omega)$, $u_0\gneqq 0$ in $\Omega$. The strong maximum principle \cite{Lieberman} shows that $u> 0$ in $\Omega\times (0,1]$ and $e^{t\Delta}u_0$ coincides with $u$ over $\Omega\times [0,\frac 12]$. If $u$ was analytic in the $t$ variable at some point $(x_0,\frac12)$ with $x_0$ in $\Omega$, because all the time derivatives of $u$ and $e^{t\Delta}u_0$ coincide at $(x_0,\frac 12)$, one gets $e^{t\Delta}u_0(x_0,t)=u(x_0,t)$ in $[0,1]$. But $v=u-e^{t\Delta}u_0$ satisfies
\begin{equation*}
\begin{cases}
\p_t v -\Delta v\le 0,\ &\text{in}\ \Omega \times (\frac12,1],\\
v=0,\ &\text{on}\ \p \Omega \times (0,1],\\
v(0)=0,\ &\text{in}\ \Omega,
\end{cases}
\end{equation*}
and the weak maximum principle implies, $v\le 0$ in $\Omega\times [\frac12,1]$. Because $v$ attains its maximum inside $\Omega\times (\frac12,1]$, the strong maximum principle gives, $u=e^{t\Delta}u_0$ in $\Omega\times [0,1]$, which is a contradiction. Thus, $u$ fails to be analytic in the time variable at all points in $\Omega\times\{\frac12\}$.

\section{Observability inequalities}\label{main_observability}
Here we give a proof of the observability inequalities in Theorems \ref{T:otroteoremilla} and \ref{T:otroteoremilla2}. We choose to do it for the equivalent case of the forward parabolic equation. The second parts of the Theorems follow from standard duality arguments and the reasonings in \cite[\S 5]{AEWZ} or \cite[\S 5]{EMZ}.
\begin{proof} From \cite{FursikovOImanuvilov,ImanuvilovYamamoto2} and \eqref{E:1}, the observability inequalities
\begin{align}
&\|u(T)\|_{L^2(\Omega)}\le Ne^{N/\left(1-\e\right)T}\|u\|_{L^2(B_R(x_0)\times (\e T,T))},\label{coeff:space_analytic2}\\
&\|u(T)\|_{L^2(\Omega)}\le Ne^{N/\left(1-\e\right)T}\|\mathbf{A}\nabla u\cdot\nu\|_{L^2(\triangle_R(q_0)\times (\e T,T))},\label{coeff:time_analytic2}
\end{align}
for solutions to
\begin{equation}\label{E: ineterior2}
\begin{cases}
\p_tu-\nabla\cdot\left(\mathbf A\nabla u\right)+\mathbf b_1\cdot\nabla u+\nabla\cdot\left(\mathbf b_2u\right)+cu=0,\ &\text{in}\ \Omega\times (0,T],\\
u=0,\ &\text{in}\ \p\Omega \times [0,T],\\
u(0)=u_0,\ &\text{in}\ \Omega,
\end{cases}
\end{equation}  
with $u_0$ in $L^2(\Omega)$, $0\le\e<1$, $B_{2R}(x_0)\subset\Omega$, $q_0$ in $\partial\Omega$, $\triangle_R(q_0)=B_R(q_0)\cap\partial\Omega$ and $N=N(\Omega, R, \varrho)$, hold when $\partial\Omega$ is $C^{1,1}$ . We may assume that $\mathcal D$ satisfies $|\mathcal D|\ge \varrho|B_R(x_0)|T$ and define
\begin{equation*}
\mathcal D_t=\{x\in\Omega : (x,t)\in\mathcal D\}\quad\text{and}\quad E=\{t\in (0,T): |\mathcal D_t|\ge |\mathcal D|/\left(2T\right)\}.
\end{equation*}
By Fubini's theorem, $\mathcal D_t$ is measurable for a.e. $0<t<T$, $E$ is measurable in $(0,T)$ with $|E|\ge \varrho T/2$. Next, let $z>1$ to be determined later and $0< l<T$ be a Lebesgue point of $E$. From \cite[Lemma 2]{AEWZ}, there is a monotone decreasing sequence $\{l_k\}_{k\ge 1}$, $l<\dots< l_{k+1}<l_l<\dots< l_1\le T$, such that
\begin{equation}\label{E: otracosillamas}
l_{k}-l_{k+1}= z\left(l_{k+1}-l_{k+2}\right)\ \text{and}\ |E\cap (l_{k+1}, l_k)|\ge \tfrac 13 \left(l_k-l_{k+1}\right),\ \text{for}\ k\ge 1.
\end{equation}
Define $\tau_k=l_{k+1}+\tfrac16\left(l_k-l_{k+1}\right)$. From \eqref{coeff:space_analytic2},
\begin{equation}\label{E;quejodidadesi}
\|u(l_k)\|_{L^2(\Omega)}\le Ne^{N/\left(l_k-l_{k+1}\right)}\|u\|_{L^2(B_R(x_0)\times (\tau_k,l_k))},
\end{equation}
Theorem \ref{lagrananaliticidad} shows that the solution $u$ to \eqref{E: ineterior2} verifies 
\begin{equation}\label{E: que agradable}
|\p_x^\alpha \p_t^p u(x,t)|\leq e^{N/\left(l_k-l_{k+1}\right)}\rho^{-1-|\alpha|-p}R^{-|\alpha|} \left(l_k-l_{k+1}\right)^{-p}|\alpha|!p!\,  \|u(l_{k+1})\|_{L^2(\Omega)},
\end{equation}
for $\alpha\in \N^n$, $p\in \N$, $x$ in $B_{R}(x_0)$ and $\tau_k\le t\le l_k$.  
  Then, from \eqref{E;quejodidadesi}, \eqref{E: que agradable} and two consecutive applications of Lemma \ref{propagation}, the first with respect to the time-variable and the second with respect to the space-variables, show that 
 \begin{equation*}
 \|u(l_{k})\|_{L^2(\Omega)}\le \left(Ne^{N/\left(l_k-l_{k+1}\right)}\int_{E\cap \left(l_{k+1},l_k\right)}\|u(t)\|_{L^1(\mathcal D_t)}\,dt\right)^\theta\|u(l_{k+1})\|_{L^2(\Omega)}^{1-\theta},
 \end{equation*}
 holds for any choice of $z>1$ and $k\ge 1$, with $N=N(\Omega, R,\varrho)$, $0<\theta<1$ and $\theta=\theta(\varrho)$. Proceeding with the {\it telescoping series method}, the later implies
\begin{multline*}
\e^{1-\theta} e^{-N/\left(l_k-l_{k+1}\right)} \|u(l_k)\|_{L^2(\Omega)}- \e\, e^{-N/\left(l_k-l_{k+1}\right)}\|u(l_{k+1})\|_{L^2(\Omega)}\\
\le N\int_{E\cap\left(l_{k+1},l_k\right)}\|u(t)\|_{L^1(\mathcal D_t)}\,dt,\;\;\mbox{when}\;\;\e>0.
\end{multline*}
Choosing $\e= e^{-1/\left(l_k-l_{k+1}\right)}$ and \eqref{E: otracosillamas} yield
\begin{equation*}\label{610c1}
\begin{split}
&e^{-\frac{N+1-\theta}{l_k-l_{k+1}}}\|u(l_k)\|_{L^2(\Omega)}- e^{-\frac{N+1-\theta}{l_{k+1}-l_{k+2}}}\|u(l_{k+1})\|_{L^2(\Omega)}\\
&\le N\int_{E\cap\left(l_{k+1},l_k\right)}\|u(t)\|_{L^1(\mathcal{D}_t)}
\,dt,\ \text{when}\ z= \tfrac{N+1}{N+1-\theta}.
\end{split}
\end{equation*}
The addition of the above telescoping series and the local energy inequality for solutions to \eqref{E: ineterior2} leads to
\begin{equation*}
\|u(T)\|_{L^2(\Omega)}\le N\|u\|_{L^1(\mathcal D)},
\end{equation*}
with $N=N(\Omega, T,\mathcal D,\varrho)$.

Similarly, we may assume that $|\mathcal J|\ge \varrho|\triangle_R(q_0)|T$ and setting
\begin{equation*}
\mathcal J_t=\{q\in\partial\Omega : (q,t)\in\mathcal J\}\quad\text{and}\quad E=\{t\in (0,T): |\mathcal J_t|\ge |\mathcal J|/\left(2T\right)\},
\end{equation*}
we get from \eqref{coeff:time_analytic2}, Theorem \ref{lagrananaliticidad} with $x_0=q_0$ and the obvious generalization of Lemma \ref{propagation} for the case of analytic functions defined over analytic hypersurfaces in $\Rn$ that
\begin{equation*}\label{E:un jiodpasomas2}
 \|u(l_{k})\|_{L^2(\Omega)}\le \left(Ne^{N/\left(l_k-l_{k+1}\right)}\int_{E\cap \left(l_{k+1},l_k\right)}\|\mathbf A\nabla u(t)\cdot\nu\|_{L^1(\mathcal J_t)}\,dt\right)^\theta\|u(l_{k+1})\|_{L^2(\Omega)}^{1-\theta}
 \end{equation*}
 for all $k\ge 0$, $z>1$, with $N=N(\Omega, R,\varrho)$, $0<\theta<1$ and $\theta=\theta(\varrho)$. Again, after choosing $z>1$, the telescoping series method implies
 \begin{equation*}
\|u(T)\|_{L^2(\Omega)}\le N\|\mathbf A\nabla u\cdot \nu\|_{L^1(\mathcal J)},
\end{equation*}
with $N=N(\Omega, T, \mathcal J, \varrho)$.
\end{proof}
Finally, Remark \ref{R: elultimo} holds because under \eqref{E:1} with $\mathbf b_2\equiv 0$, the Carleman inequalities and reasonings in \cite{EF} and \cite[\S 3]{EFV} can be used to prove the following global interpolation inequality: there are $N=N(\Omega, R, \varrho)$ and $0<\theta<1$, $\theta=\theta(\Omega, R, \varrho)$ such that
 \begin{equation}\label{E:un jiodpasomas3}
 \|u(t)\|_{L^2(\Omega)}\le \left(Ne^{N/\left(t-s\right)}\|u(t)\|_{L^1(B_R(x_0))}\right)^\theta\|u(s)\|_{L^2(\Omega)}^{1-\theta},
 \end{equation}
 holds, when $0\le s<t\le 1$ and $u$ satisfies \eqref{E: ineterior2}. Also, from Remark \ref{R:1} the solution $u$ to \eqref{E: ineterior2} verifies 
\begin{equation}\label{E: que agradable2}
|\p_x^\alpha u(x,t)|\leq e^{N/\left(l_k-l_{k+1}\right)}\rho^{-1-|\alpha|}R^{-|\alpha|}|\alpha|!\,  \|u(l_{k+1})\|_{L^2(\Omega)},
\end{equation}
for $\alpha\in \N^n$, $x$ in $B_{R}(x_0)$ and $\tau_k\le t\le l_k$. Then, replace respectively \eqref{coeff:space_analytic2} and \eqref{E: que agradable} by \eqref{E:un jiodpasomas3} and \eqref{E: que agradable2} in the proof of Theorem \ref{T:otroteoremilla}.

\medskip

\section{Historical remarks and comments}\label{comment}

It is worth mentioning that the main result Theorem \ref{lagrananaliticidad} in this paper has not been indicated 
in the existing literature related to analyticity properties of solutions to parabolic equations. 
It is motivated by the problem of establishing the null-controllability over measurable sets with positive measure and to obtain the bang-bang property of optimal controls for  general parabolic evolutions.

With the purpose to extend the estimates of the form \eqref{estimatescontrol} to {\it time-dependent} parabolic evolutions, we studied the literature concerned with analyticity properties of solutions to parabolic equations and found the following: most of the works \cite{Friedman3,Friedman2,Tanabe,Friedman1,Eidelman,KinderlehrerNirenberg,Komatsu,Takac0,Takac} make no precise claims about lower bounds for the radius of convergence of the spatial Taylor series of the solutions for small values of the time-variable; the authors were likely more interested in the qualitative behavior.

 If one digs into the proofs, one finds the following: \cite{Friedman3} considers local in space interior analytic estimates for linear parabolic equations and finds a lower bound comparable to $t$. \cite{Friedman2} is a continuation of \cite{Friedman3} for quasi-linear parabolic equations and contains claims but no proofs. The results are based on \cite{Friedman3}. Of course, one can after the rescaling of the local results in \cite{Friedman3} for the growth of the spatial-derivatives over $B_1\times [\frac 12,1]$ for solutions living in $B_2\times (0,1]$, to derive the bound \eqref{E: kimalito} for the spatial directions. \cite{Tanabe} finds a lower bound comparable to $t$.  \cite[ch. 3, Lemma 3.2]{Friedman1} gets close to make a claim like \eqref{estimatescontrol} but the proof and claim in the cited Lemma are not correct, as the inequalities (3.5), (3.6) in the Lemma and the last paragraph in \cite[ch. 3, \S 3]{Friedman1} show when comparing them with the following fact: an exponential factor of the form $e^{1/\rho t^{1/(2m-1)}}$ in the right hand side of \eqref{estimatescontrol} is necessary and should also appear in the right hand side of the inequality (3.6) of the Lemma, for the Gaussian kernel, $G(x,t+\epsilon)$, $t\ge 0$, satisfies $G(iy,2\e)=\left(2\e\right)^{-\frac n2}e^{y^2/8\e}$ and (3.5) in the Lemma independently of $\e>0$, but the conclusion (3.6) in the Lemma  would bound $G(iy,2\e)$, for $y$ small and independently of $\e>0$, by a fixed negative power of $\e$, which is impossible.  The approach in \cite[ch. 3, Lemma 3.2]{Friedman1}, which only uses the existence of the solution over the time interval $[t/2, t]$ to bound all the derivatives at time $t$, cannot see the exponential factor and find a lower bound for the spatial radius of convergence independent of $t$. On the contrary, the methods in \cite[ch. 3]{Friedman1} are easily seen to imply \eqref{E: kimalito}. \cite{KinderlehrerNirenberg} and \cite{Komatsu} deal with non-linear parabolic second order evolutions and find a lower bound comparable to $t$. \cite{Takac0,Takac} consider linear problems and find a lower bound comparable to $t^{\frac 1{2m}+\e}$, for all $\e>0$. See also \cite[\S 6]{Takac0} and \cite[\S 9]{Takac} for a historical discussion.

Finally, \cite[p. 178 Th. 8.1 (15)]{Eidelman} builds a holomorphic extension in the space-variables of the fundamental solution for high-order parabolic equations or systems. This holomorphic extension is built upon the assumptions of local analyticity of the coefficients in the spatial-variables and continuity in the time-variable. The later provides an alternative proof of \eqref{estimatescontrol} with $p=0$ at points in the interior of $\Omega$. 
As far as we know, Eidelman's School did not work out similar estimates for the complex holomorphic extension of the Green's function with zero lateral Dirichlet conditions for $\mathcal L$ over $\Omega$ up to the boundary. If they had done so, it would provide another proof of (1.7) up to the boundary. We believe that such approach is more complex than the one in this work.

On the other hand, the motivation to prove the estimates of the form \eqref{estimatescontrol} comes from its applications to the null-controllability of parabolic evolutions with bounded controls acting over measurable sets of positive measure. To describe these results we begin with a report of the progresses made on the null-controllability and observability of parabolic evolutions over measurable sets. In what follows, $\omega$, $\gamma$ and $E$ denote subsets of $\Omega$, $\partial\Omega$ and  $(0,T)$ respectively: except for the 1997 work \cite{MizelSeidman} - where the authors proved the one-sided boundary observability of the heat equation in one space dimension over measurable sets - up to 2008 the control regions considered in the literature were always of the type $\omega\times (0,T)$ or $\gamma\times (0,T)$, with $\omega$ and $\gamma$ open. Then, \cite{W1} showed that the heat equation is observable over sets $\omega\times E$, with $\omega$ open and $E$ measurable with positive measure. \cite{AE} showed that second order parabolic equations with {\it time-independent} Lipschitz coefficients associated to {\it self-adjoint} elliptic operators with local analytic coefficients in a neighborhood of a measurable set with positive measure $\omega$ are observable over $\omega\times (0,T)$; and that the same holds for one dimensional parabolic operators with {\it time-independent} measurable coefficients. Both \cite{W1} and \cite{AE} relied on the Lebeau-Robbiano  strategy \cite{G. LebeauL. Robbiano} for the construction of control functions. \cite{C1} combined the reasonings of \cite{W1} and \cite{AE} to obtain the observability of the heat equation over arbitrary cartesian products of measurable sets $\omega\times E$ with positive measure. \cite{PW1} and \cite{PWZ} showed the observability of $\partial_t-\Delta+c(x,t)$, with $c$ a bounded function, over sets $\omega\times E$ with $\omega$ open and $E$ measurable with positive measure. These two works used Poon's parabolic frequency function \cite{Poon}, its further developments in \cite{EFV} and the telescoping series method \cite{Miller2}. \cite{AEWZ} established the interior and boundary null-controllability with bounded controls of the heat equation over general measurable sets $\mathcal D\subset\Omega\times (0,T)$ and $\mathcal J\subset\partial\Omega\times(0,T)$ with positive measure.  Finally, \cite{EMZ} extended the results in \cite{AEWZ} to higher order parabolic evolutions or systems with {\it time-independent} coefficients associated to possibly {\it non self-adjoint} elliptic operators with global analytic coefficients when $\partial\Omega$ is analytic.

\section{Appendix} \label{S:apendice}
Here we prove the weighted $L^2$ estimates we need in Section \ref{main_analyticity}. To prove them we use the standard $W^{2m,1}_2$ Schauder estimates.
\begin{lemma}\label{Lema:interpolation}
Let $0\le\theta\le 1$ and $\Omega$ be a Lipschitz domain. Then, there is $N=N(m,n,\Omega)$ such that
\begin{multline}\label{interpolation}
\|t^{p+\frac{k}{2m}}\peso D^k u \|\\ \leq N\left[ \|t^p\peso u\|^{\frac{2m-k}{2m}}\|t^{p+1}\peso D^{2m}u\|^{\frac{k}{2m}} +\|t^p\peso u\|\right]
\end{multline}
holds for all $k=1,\ldots,2m-1$, $p\geq 0$ and $u$ in $C^{\infty}(\overline{\Omega}\times [0,1])$.
\end{lemma}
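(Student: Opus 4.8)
The plan is to reduce the weighted estimate \eqref{interpolation} to the classical Gagliardo–Nirenberg interpolation inequality applied on a fixed cross-section at each time $t$, treating the weight $t^{p}\peso$ as (essentially) a constant that can be pulled through the spatial derivatives. First I would record the elementary scalar inequality
\begin{equation*}
\|D^k v\|_{L^2(\Omega)}\le N\left[\|v\|_{L^2(\Omega)}^{\frac{2m-k}{2m}}\|D^{2m}v\|_{L^2(\Omega)}^{\frac{k}{2m}}+\|v\|_{L^2(\Omega)}\right],\qquad k=1,\dots,2m-1,
\end{equation*}
valid for $v\in C^\infty(\overline\Omega)$ on a Lipschitz domain with $N=N(m,n,\Omega)$; this is the standard interpolation inequality between $L^2$ and $H^{2m}$ (it follows, e.g., from an extension operator on the Lipschitz domain together with the Euclidean Gagliardo–Nirenberg inequality, or from the intermediate-derivative estimates in Friedman). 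I would apply this with $v=u(\cdot,t)$ for each fixed $t\in(0,1]$, so that
\begin{equation*}
\|D^k u(\cdot,t)\|_{L^2(\Omega)}\le N\left[\|u(\cdot,t)\|_{L^2(\Omega)}^{\frac{2m-k}{2m}}\|D^{2m}u(\cdot,t)\|_{L^2(\Omega)}^{\frac{k}{2m}}+\|u(\cdot,t)\|_{L^2(\Omega)}\right].
\end{equation*}

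Next I would multiply through by the scalar weight $w(t)=t^{p+\frac{k}{2m}}\peso$ and rewrite the first term on the right by splitting the weight as $w(t)=\big(t^{p}\peso\big)^{\frac{2m-k}{2m}}\big(t^{p+1}\peso\big)^{\frac{k}{2m}}$, which is an exact identity since $(p)\cdot\frac{2m-k}{2m}+(p+1)\cdot\frac{k}{2m}=p+\frac{k}{2m}$ and the exponents of $\peso$ add to $1$. This gives the pointwise-in-$t$ bound
\begin{equation*}
w(t)\|D^ku(\cdot,t)\|_{L^2(\Omega)}\le N\Big[\big(t^{p}\peso\|u(\cdot,t)\|_{L^2(\Omega)}\big)^{\frac{2m-k}{2m}}\big(t^{p+1}\peso\|D^{2m}u(\cdot,t)\|_{L^2(\Omega)}\big)^{\frac{k}{2m}}+t^{\frac{k}{2m}}\,t^{p}\peso\|u(\cdot,t)\|_{L^2(\Omega)}\Big],
\end{equation*}
where for the lower-order term I used $t^{\frac{k}{2m}}\le 1$ on $(0,1]$ to absorb that factor into the constant.

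Finally I would take $L^2$ norms in $t$ over $(0,1)$, square, integrate, and apply Hölder's inequality with exponents $\frac{2m}{2m-k}$ and $\frac{2m}{k}$ to the product term:
\begin{equation*}
\int_0^1\big(a(t)\big)^{\frac{2m-k}{2m}}\big(b(t)\big)^{\frac{k}{2m}}\,dt\le\Big(\int_0^1 a(t)\,dt\Big)^{\frac{2m-k}{2m}}\Big(\int_0^1 b(t)\,dt\Big)^{\frac{k}{2m}},
\end{equation*}
with $a(t)=\big(t^{p}\peso\|u(\cdot,t)\|_{L^2(\Omega)}\big)^2$ and $b(t)=\big(t^{p+1}\peso\|D^{2m}u(\cdot,t)\|_{L^2(\Omega)}\big)^2$; this produces exactly $\|t^p\peso u\|^{\frac{2m-k}{2m}}\|t^{p+1}\peso D^{2m}u\|^{\frac{k}{2m}}$ after taking square roots, while the lower-order term integrates directly to $\|t^p\peso u\|$. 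I do not expect a genuine obstacle here: the only point requiring a little care is making sure the cited $L^2$–$H^{2m}$ interpolation inequality is available on a general Lipschitz domain with the constant depending only on $m,n,\Omega$ (so that it does not degenerate with the weight), and that the algebraic splitting of the weight exponents is exact rather than merely an inequality — both are routine.
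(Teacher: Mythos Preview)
Your proposal is correct and follows essentially the same approach as the paper: apply the spatial Gagliardo--Nirenberg interpolation inequality at each fixed time, multiply by the weight $t^{p+\frac{k}{2m}}\peso$ (splitting it exactly as you do), and then use H\"older's inequality in the time variable. The paper's proof is more terse but identical in structure, citing \cite[Theorems~4.14, 4.15]{Adams} for the interpolation step and simply stating ``multiply by $t^{p+\frac{k}{2m}}\peso$ and H\"older's inequality over $[0,1]$ yields \eqref{interpolation}.''
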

\begin{remark}\label{R: otraapreciacion} When $\Omega$ is either $B_R$ or $B_R^+$, $R>0$, then 
\begin{multline}
\|t^{p+\frac{k}{2m}}\peso D^k u \|_{L^2(\Omega\times (0,1))}\\ \leq N\left[ \|t^p\peso u\|_{L^2(\Omega\times (0,1))}^{\frac{2m-k}{2m}}\|t^{p+1}\peso D^{2m}u\|_{L^2(\Omega\times (0,1))}^{\frac{k}{2m}} \right.
\\\left.+R^{-k}\|t^p\peso u\|_{L^2(\Omega\times (0,1))}\right],
\end{multline}
with $N=N(m,n)$.
\end{remark}
\begin{proof}
By the interpolation inequality \cite[Theorems 4.14, 4.15]{Adams}, there is $N=N(m,\Omega)$ such that
\begin{equation}\label{9}
\|D^k u(t)\|_{L^2(\Omega)}\leq N \left[\|u(t)\|_{L^2(\Omega)}^{\frac{2m-k}{2m}} \|D^{2m}u(t)\|_{L^2(\Omega)}^{\frac{k}{2m}}+\|u(t)\|_{L^2(\Omega)}\right],
\end{equation}
when $1\le k< 2m$. Now, multiply \eqref{9} by $t^{p+\frac{k}{2m}}\peso$ and H\"{o}lder's inequality over $[0,1]$ yields \eqref{interpolation}.
\end{proof}
\begin{lemma}\label{Lema:regularidad1}  Let $u$ in $C^\infty(\overline{\Omega}\times [0,1])$ satisfy 
\begin{equation*}
\begin{cases}
\p_tu+\mathcal L u=F,\ &\text{in}\ \Omega\times (0,1],\\
u=Du=\ldots=D^{m-1}u=0 ,\ &\text{in}\ \partial\Omega\times (0,1].\\
\end{cases}
\end{equation*}
Then, there is $N=N(\Omega,n,\varrho,m)$ such that
\begin{multline} \label{regularidad1}
\|t^{p+1}\peso \partial_t u \|+\sum_{l=0}^{2m}\|t^{p+\frac l{2m}}\peso D^{l} u \| \\
\leq N\left[ (p+k+1) \|t^p e^{{-\frac{k-1}{k}}\theta t^{-\sigma}} u \| +\|t^{p+1}\peso F \|  \right],
\end{multline}
holds for any $\theta\ge 0$, $p\geq0$ and $k\geq 2$.
\end{lemma}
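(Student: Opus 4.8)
The plan is to reduce the weighted estimate to the unweighted Schauder estimate \eqref{E: Schauder estimate} by absorbing the weight $t^{p+1}\peso$ into a new unknown. First I would set $v = t^{p+1}\peso\, u$; since $v$ vanishes at $t=0$ (because $t^{p+1}\peso \to 0$ as $t\to 0^+$ for every $\theta\ge 0$, using $p\ge 0$) and inherits the lateral boundary conditions $v=Dv=\dots=D^{m-1}v=0$ on $\partial\Omega\times(0,1]$, it solves a problem of the form
\begin{equation*}
\begin{cases}
\p_t v + \mathcal L v = t^{p+1}\peso F + \big(\tfrac{d}{dt}(t^{p+1}\peso)\big)u,\ &\text{in}\ \Omega\times(0,1],\\
v = Dv = \dots = D^{m-1}v = 0,\ &\text{in}\ \partial\Omega\times(0,1],\\
v(0) = 0,\ &\text{in}\ \Omega.
\end{cases}
\end{equation*}
Here I have used that $\mathcal L$ acts only in the spatial variables, so it commutes with multiplication by the purely temporal factor $t^{p+1}\peso$. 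Applying \eqref{E: Schauder estimate} to $v$ gives
\begin{equation*}
\|\p_t v\| + \sum_{|\alpha|\le 2m}\|\p_x^\alpha v\| \le K\big[\|t^{p+1}\peso F\| + \|\big(\tfrac{d}{dt}(t^{p+1}\peso)\big)u\| + \|v\|\big],
\end{equation*}
and the left side already controls $\|t^{p+1}\peso\p_t u\|$ and $\sum_{l=0}^{2m}\|t^{p+1}\peso D^l u\|$ after moving the (bounded, time-only) derivative of the weight from the $\p_t v$ term, so there the routine bookkeeping produces the lower-order term $\|t^p\peso u\|$ with a factor $\lesssim (p+1)$.

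The only genuine point requiring care is the term $\big(\tfrac{d}{dt}(t^{p+1}\peso)\big)u$. Computing, $\tfrac{d}{dt}(t^{p+1}\peso) = (p+1)t^p\peso + \sigma\theta\, t^{p-\sigma}\peso$, and the first summand is harmless, contributing $(p+1)\|t^p\peso u\|$. The dangerous summand is $\sigma\theta\, t^{p-\sigma}\peso u$: naively $t^{-\sigma}\peso$ is unbounded. The trick is to not use the full weight there but to borrow a sliver of the exponential: write $\peso = e^{-\frac1k\theta t^{-\sigma}}\cdot e^{-\frac{k-1}{k}\theta t^{-\sigma}}$ and apply \eqref{unadelasclaves} with $\alpha=\sigma$, $\beta=\sigma$ (so $\alpha/\beta=1$) to the factor $t^{-\sigma}e^{-\frac1k\theta t^{-\sigma}}$, which yields $\sigma\theta\, t^{-\sigma}e^{-\frac1k\theta t^{-\sigma}} \le \sigma\theta\cdot e^{-1}\theta^{-1}k = \sigma k e^{-1} \le k$. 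Hence $\|\sigma\theta\, t^{p-\sigma}\peso u\| \le k\,\|t^p e^{-\frac{k-1}{k}\theta t^{-\sigma}} u\|$, which is exactly the kind of term allowed on the right side of \eqref{regularidad1}; combined with the $(p+1)$ from the other summand this is why the coefficient $(p+k+1)$ appears, and why the weaker weight $e^{-\frac{k-1}{k}\theta t^{-\sigma}}$ (rather than $\peso$) sits on the right.

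Finally I would clean up: the term $\|v\| = \|t^{p+1}\peso u\| \le \|t^p e^{-\frac{k-1}{k}\theta t^{-\sigma}}u\|$ since $t\le 1$ and $\peso\le e^{-\frac{k-1}{k}\theta t^{-\sigma}}$, and the missing intermediate norms $\|t^{p+l/2m}\peso D^l u\|$ for $1\le l\le 2m-1$ — which are weighted by the intermediate power $t^{p+l/2m}$ rather than $t^{p+1}$ — are recovered from the interpolation inequality of Lemma \ref{Lema:interpolation} applied to $u$, bounding them by $\|t^p\peso u\|^{(2m-l)/2m}\|t^{p+1}\peso D^{2m}u\|^{l/2m} + \|t^p\peso u\|$ and then using the bounds on the two end terms already obtained, together with Young's inequality to absorb $\|t^{p+1}\peso D^{2m}u\|$ back into the left side with a small constant. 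The main obstacle, as indicated, is purely the control of $\theta t^{-\sigma}\peso$; once \eqref{unadelasclaves} is invoked the rest is routine. I note that the specific value $\sigma = 1/(2m-1)$ plays no role in this lemma — any $\sigma>0$ would do — it is chosen this way for the later induction, not here.
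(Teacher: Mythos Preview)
Your proposal is correct and follows essentially the same route as the paper: define $v=t^{p+1}\peso u$, apply the Schauder estimate \eqref{E: Schauder estimate} to $v$, control the commutator term $\sigma\theta t^{p-\sigma}\peso u$ by splitting off $e^{-\frac{\theta}{k}t^{-\sigma}}$ and using the elementary bound $\frac{\theta}{k}t^{-\sigma}e^{-\frac{\theta}{k}t^{-\sigma}}\le 1$ (which is exactly the paper's inequality \eqref{unaacotacion}, and the special case $\alpha=\beta$ of \eqref{unadelasclaves} that you invoke), and finish with Lemma \ref{Lema:interpolation}. The only superfluous remark is the appeal to Young's inequality at the end: once you have bounded $\|t^{p+1}\peso D^{2m}u\|$ and $\|t^p\peso u\|$ by the right-hand side of \eqref{regularidad1}, the interpolation lemma bounds the intermediate norms directly, with nothing to absorb.
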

\begin{proof}
Define $v=t^{p+1}\peso u$, then $v$ satisfies $\p_t v+ \mathcal L v= G$ in $\Omega\times (0,1]$, with
\begin{equation}\label{E:quies es}
G=t^{p+1}\peso F+\left[(p+1)t^p \peso +\sigma \theta t^{p-\sigma}\peso\right]u.
\end{equation}
For $t>0$ and $k\ge 2$,
\begin{equation}\label{unaacotacion}
\theta t^{p-\sigma} \peso =  \tfrac\theta k t^{-\sigma}e^{-\frac\theta k t^{-\sigma}}ke^{-\theta\frac{k-1}{k}t^{-\sigma}}\le k e^{-\theta \frac{k-1}{k}t^{-\sigma}}.
\end{equation}
By the $W^{2m,1}_{2}$ Schauder estimate \eqref{E: Schauder estimate},
\begin{equation} \label{CZ1}
\|\p_t v\|+\|D^{2m} v\|\leq N\left[ \|v\| +\|G\|\right],
\end{equation}
with $N=N(\Omega,n,\varrho,m)$ and \eqref{regularidad1} follows from \eqref{CZ1}, \eqref{unaacotacion}, \eqref{E:quies es} and Lemma \ref{Lema:interpolation}.
\end{proof}
Lemma \ref{Lema:regularidad2} is a well-known  estimate near the boundary. It can be found in \cite[Theorem 7.22]{Lieberman} for $m=1$. We prove it here for completeness.
\begin{lemma} \label{Lema:regularidad2}
Let $u$ in $C^{\infty}(B_R^+\times [0,1])$ verify
\begin{equation*}
\begin{cases}
\p_tu+\mathcal Lu=F,\ &\text{in}\ B_R^+\times(0,1],\\
u=Du=\ldots=D^{m-1}u=0 ,\ &\text{in}\ \{x_n=0\}\cap\partial B_R^+\times(0,1],\\
u(0)=0 ,\ &\text{in}\ \ B_R^+.
\end{cases}
\end{equation*}
and $0<r<r+\delta<R\le 1$. Then, there is $N=N(n,\varrho,m)$ such that
\begin{equation} \label{regularidad2}
\|\partial_t u \|_r'+\|D^{2m} u \|_r' 
\leq N\left[\delta^{-2m} \| u \|_{r+\delta}' +\| F \|_{r+\delta}'  \right].
\end{equation}
\end{lemma}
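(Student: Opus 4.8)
The plan is to reduce the boundary estimate to the global $W^{2m,1}_2$ Schauder estimate \eqref{E: Schauder estimate} by a cutoff argument, exactly as one does for interior estimates but respecting the flat portion of the boundary where the homogeneous Dirichlet conditions hold. First I would pick a cutoff function $\eta\in C_0^\infty(B_{r+\delta})$ with $0\le\eta\le 1$, $\eta\equiv 1$ on $B_r$, and $|D^j\eta|\le N\delta^{-j}$ for $0\le j\le 2m$; since $\eta$ depends only on the spatial variables and is supported away from the curved part $\partial B_{r+\delta}^+\cap\{x_n>0\}$, the product $v=\eta u$ still vanishes together with its first $m-1$ derivatives on $\{x_n=0\}\cap\partial B_{r+\delta}^+$, and $v(0)=0$. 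Thus $v$ is an admissible function for the Schauder estimate on the half-ball domain $B_{r+\delta}^+$ (whose boundary portion $\{x_n=0\}$ is flat, hence certainly of class $C^{2m-1,1}$).

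Next I would compute the equation satisfied by $v$. Since $\partial_t v+\mathcal L v=\eta(\partial_t u+\mathcal Lu)+[\mathcal L,\eta]u=\eta F+[\mathcal L,\eta]u$, and the commutator $[\mathcal L,\eta]u=(-1)^m\sum_{|\alpha|\le 2m}a_\alpha\sum_{0<\beta\le\alpha}\binom{\alpha}{\beta}(\partial_x^\beta\eta)(\partial_x^{\alpha-\beta}u)$ involves only derivatives of $u$ of order at most $2m-1$ (because $\beta\ne 0$), using \eqref{parabolicidad} we get
\begin{equation*}
\|[\mathcal L,\eta]u\|_{L^2(B_{r+\delta}^+\times(0,1))}\le N\sum_{l=0}^{2m-1}\delta^{-(2m-l)}\|D^l u\|_{r+\delta}'.
\end{equation*}
Applying \eqref{E: Schauder estimate} to $v$ on $B_{r+\delta}^+$ and using $v\equiv u$ on $B_r^+$ then yields
\begin{equation*}
\|\partial_t u\|_r'+\|D^{2m}u\|_r'\le N\Bigl[\|F\|_{r+\delta}'+\sum_{l=0}^{2m-1}\delta^{-(2m-l)}\|D^l u\|_{r+\delta}'\Bigr].
\end{equation*}

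The last step is to absorb the lower-order terms $\sum_{l=1}^{2m-1}\delta^{-(2m-l)}\|D^l u\|_{r+\delta}'$. Here I would invoke the interpolation inequality of Remark \ref{R: otraapreciacion} (with $p=0$, $\theta=0$), or more precisely a localized version of Lemma \ref{Lema:interpolation} on nested half-balls together with Young's inequality, to bound $\delta^{-(2m-l)}\|D^l u\|'$ by $\varepsilon\|D^{2m}u\|'+N_\varepsilon\delta^{-2m}\|u\|'$ on a slightly larger ball; a standard iteration over a sequence of radii $r=r_0<r_1<\dots<r_\infty=r+\delta$ with geometrically decaying increments then removes the dependence on intermediate norms and gives \eqref{regularidad2} with $N=N(n,\varrho,m)$. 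I expect the main obstacle to be exactly this absorption: one must be careful that the interpolation is applied on domains where both $u$ and its top derivative are controlled, so that the bootstrap constants do not blow up — this is the reason the statement is phrased with the extra room $\delta$ between the radii $r$ and $r+\delta$, and the reason the claimed constant depends only on $n,\varrho,m$ and not on $r$ or $\delta$ (all $\delta$-dependence being explicit in the powers $\delta^{-2m}$).
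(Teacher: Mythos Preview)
Your proposal is correct and follows essentially the same strategy as the paper: cut off with $\eta$, apply the half-space $W^{2m,1}_2$ Schauder estimate to $v=\eta u$, and then absorb the intermediate derivatives coming from the commutator $[\mathcal L,\eta]u$. The only noteworthy difference is in the absorption step: instead of your proposed iteration over a sequence of radii, the paper introduces a parameter $\lambda\in(0,1)$ in the cutoff (so $\eta\equiv 1$ on $B_{r+\lambda\delta}$, $\eta\equiv 0$ outside $B_{r+\frac{1+\lambda}{2}\delta}$), passes to the weighted seminorms $|u|_{k,\delta}=\sup_{\mu\in(0,1)}[(1-\mu)\delta]^k\|D^ku\|'_{r+\mu\delta}$, and uses the interpolation of these seminorms \cite[p.~237]{GilbargTrudinger} to absorb $\sum_{k<2m}|u|_{k,\delta}$ directly into $|u|_{2m,\delta}$ in one step. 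This seminorm trick and your geometric-radius iteration are well known to be interchangeable, so there is no substantive gap.
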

\begin{proof}
Let $\eta$ in $C_0^\infty(B_R)$ be such that for $0<\lambda<1$
\begin{equation*}
\eta(x)=
\begin{cases}
1,\ &\text{in\ }B_{r+\lambda \delta},\\
0,\ &\text{in\ }B_{r+\frac{1+\lambda}{2}\delta}^c,\\
\end{cases}
\end{equation*}
and $|D^k \eta|\leq C_m \left[(1-\lambda) \delta\right]^{-k}$, for $k=0,\ldots,2m$. Define $v=u \eta$, then
\begin{equation*}
\p_tv +\mathcal Lv=\eta F+ (-1)^m\sum_{|\alpha|\leq 2m}a_\alpha \sum_{\gamma < \alpha} \binom{\alpha}{\gamma} \p_x^{\alpha-\gamma} \eta \p_x^{\gamma} u.
\end{equation*}
By the $W^{2m,1}_{2}$ Schauder estimate over $B_R^+\times (0,T]$ applied to $v$ \cite[Theorem 4]{DongKim}
\begin{multline}\label{CZ2}
\|\p_t u\|_{r}'+\|D^{2m}u\|_{r+\lambda \delta}'\\ \leq N \left[\|F\|_{r+\frac{1+\lambda}{2}\delta}'+\sum_{k=0}^{2m-1}\left[(1-\lambda)\delta\right]^{k-2m}\|D^ku\|_{r+\frac{1+\lambda}{2}\delta}'  \right].
\end{multline}
Define the semi-norms
\begin{align*}
|u|_{k,\delta}&=\sup_{\mu\in (0,1)}{\left[(1-\mu)\delta \right]^k \|D^k u\|_{r+\mu \delta}'},\ k=0,\ldots,2m.
\end{align*}
Estimate \eqref{CZ2} can be rewritten in terms of these semi-norms as follows
\begin{equation} \label{enseminormas}
\delta^{2m}\|\p_t u\|_{r}'+|u|_{2m,\delta}\leq N\left[\sum_{k=0}^{2m-1} |u|_{k,\delta}+\delta^{2m}\|F\|_{r+\delta}' \right].
\end{equation}
To eliminate the terms $|u|_{k,\delta}$ from the right hand side of \eqref{enseminormas}, recall that the semi-norms interpolate (\cite[Theorem 4.14]{Adams} and \cite[p. 237]{GilbargTrudinger}); i.e., there is $c=c(n,m)$ such that
$$|u|_{k,\delta}\leq \e |u|_{2m,\delta}+c\e^{-\frac{k}{2m-k}}\|u\|_{r+\delta}',$$
for any $\e\in(0,1)$, so
$$\sum_{k=0}^{2m-1} |u|_{k,\delta}\leq 2m\e|u|_{2m,\delta} + c \sum_{k=0}^{2m-1}\e^{-\frac{k}{2m-k}}\|u\|_{r+\delta}'.$$
Choose then $\e\le\frac{1}{4mN}$ and from \eqref{enseminormas}
\begin{equation*}
\delta^{2m}\|\p_t u\|_r'+|u|_{2m,\delta}\leq N\left[\|u\|_{r+\delta}' +\delta^{2m}\|F\|_{r+\delta}'\right],
\end{equation*}
which yields \eqref{regularidad2}.
\end{proof}

Lemma \ref{Lema:regularidad2_interior} is the interior analogue of Lemma \ref{Lema:regularidad2} but now using \cite[Theorem 2]{DongKim}.
\begin{lemma} \label{Lema:regularidad2_interior}
Let $u$ in $C^{\infty}(B_R\times [0,1])$ verify
\begin{equation*}
\begin{cases}
\p_tu+\mathcal Lu=F,\ &\text{in}\ B_R\times(0,1],\\
u(0)=0 ,\ &\text{in}\ B_R,
\end{cases}
\end{equation*}
and $0<r<r+\delta<R\le 1$. Then, there is $N=N(n,\varrho,m)$ such that
\begin{equation*}
\|\partial_t u \|_r+\|D^{2m} u \|_r 
\leq N\left[\delta^{-2m} \| u \|_{r+\delta} +\| F \|_{r+\delta}  \right].
\end{equation*}
\end{lemma}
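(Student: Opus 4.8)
The plan is to mimic the proof of Lemma~\ref{Lema:regularidad2}, with the boundary $W^{2m,1}_2$ Schauder estimate \cite[Theorem 4]{DongKim} replaced by its interior counterpart \cite[Theorem 2]{DongKim}, and with the zero-trace boundary terms simply absent. First I would fix $0<\lambda<1$ and choose a cutoff $\eta\in C_0^\infty(B_R)$ with $\eta\equiv 1$ on $B_{r+\lambda\delta}$, $\eta\equiv 0$ outside $B_{r+\frac{1+\lambda}{2}\delta}$, and $|D^k\eta|\le C_m[(1-\lambda)\delta]^{-k}$ for $k=0,\dots,2m$. Setting $v=u\eta$, the Leibniz rule gives
\begin{equation*}
\p_t v+\mathcal Lv=\eta F+(-1)^m\sum_{|\alpha|\le 2m}a_\alpha\sum_{\gamma<\alpha}\binom{\alpha}{\gamma}\p_x^{\alpha-\gamma}\eta\,\p_x^\gamma u,\quad\text{in }B_R\times(0,1],
\end{equation*}
with $v(0)=0$ in $B_R$. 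Applying the interior Schauder estimate \cite[Theorem 2]{DongKim} to $v$ on $B_R\times(0,1]$, and using $\|a_\alpha\|_{L^\infty}\le\varrho^{-1}$ together with the support of $\eta$, yields
\begin{equation*}
\|\p_t u\|_r+\|D^{2m}u\|_{r+\lambda\delta}\le N\left[\|F\|_{r+\frac{1+\lambda}{2}\delta}+\sum_{k=0}^{2m-1}[(1-\lambda)\delta]^{k-2m}\|D^ku\|_{r+\frac{1+\lambda}{2}\delta}\right],
\end{equation*}
with $N=N(n,\varrho,m)$ (the dependence on $\varrho$ entering through the ellipticity and bound on the $a_\alpha$).

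Next I would introduce the dilation-invariant semi-norms $|u|_{k,\delta}=\sup_{\mu\in(0,1)}[(1-\mu)\delta]^k\|D^ku\|_{r+\mu\delta}$ for $k=0,\dots,2m$, exactly as in Lemma~\ref{Lema:regularidad2}, and rewrite the displayed inequality as
\begin{equation*}
\delta^{2m}\|\p_t u\|_r+|u|_{2m,\delta}\le N\left[\sum_{k=0}^{2m-1}|u|_{k,\delta}+\delta^{2m}\|F\|_{r+\delta}\right].
\end{equation*}
Then I would invoke the interpolation inequality for these semi-norms (\cite[Theorem 4.14]{Adams}, \cite[p.~237]{GilbargTrudinger}), $|u|_{k,\delta}\le\e|u|_{2m,\delta}+c\e^{-k/(2m-k)}\|u\|_{r+\delta}$ for $\e\in(0,1)$, absorb the top-order term by choosing $\e\le\frac1{4mN}$, and obtain $\delta^{2m}\|\p_t u\|_r+|u|_{2m,\delta}\le N[\|u\|_{r+\delta}+\delta^{2m}\|F\|_{r+\delta}]$. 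Since $|u|_{2m,\delta}\ge\delta^{2m}\|D^{2m}u\|_r$ (taking $\mu\downarrow 0$ — more precisely using the definition at any fixed $\mu$ and that the radii are nested, one bounds $\|D^{2m}u\|_r$ by $\delta^{-2m}|u|_{2m,\delta}$ up to a harmless constant), dividing by $\delta^{2m}$ gives the claimed estimate.

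I do not expect a genuine obstacle here: the statement is the interior, zero-initial-data version of Lemma~\ref{Lema:regularidad2}, and the argument is literally the same modulo deleting the boundary trace and swapping the reference from \cite[Theorem 4]{DongKim} to \cite[Theorem 2]{DongKim}. The only point requiring a touch of care is the same one as in Lemma~\ref{Lema:regularidad2}: ensuring the lower-order semi-norms are genuinely absorbed, which is why one works with the supremum over shrinking radii $r+\mu\delta$ rather than a single fixed radius — the cutoff loses a factor $(1-\lambda)^{-1}$ at each order, and the semi-norm formalism is exactly what packages those losses so that the interpolation-and-absorption step closes with a constant independent of $\delta$ and of the particular radii.
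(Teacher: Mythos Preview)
Your proposal is correct and matches the paper's approach exactly: the paper does not give a separate proof of Lemma~\ref{Lema:regularidad2_interior} but simply states it is the interior analogue of Lemma~\ref{Lema:regularidad2} using \cite[Theorem 2]{DongKim} in place of \cite[Theorem 4]{DongKim}. Your write-up reproduces that argument faithfully, including the cutoff, the semi-norm device, the interpolation, and the absorption step.
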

Lemmas \ref{Lema:regularidad2} and \ref{Lema:regularidad1} imply Lemma \ref{Lema:regularidad3}.

\begin{lemma} \label{Lema:regularidad3}
Let $u$ in $C^{\infty}(B_R^+\times [0,1])$ satisfy
\begin{equation*}
\begin{cases}
\p_tu+ \mathcal Lu=F,\ &\text{in}\ B_R^+\times(0,1],\\
u=Du=\ldots=D^{m-1}u=0 ,\ &\text{in}\ \{x_n=0\}\cap\partial B_R^+\times(0,1]\\
\end{cases}
\end{equation*}
and $0<r<r+\delta<R\le 1$. Then, there is $N=N(n,\varrho,m)$ such that
\begin{multline*}
\|t^{p+1}\peso \p_t u \|_r'+\|t^{p+1}\peso D^{2m} u \|_r' 
\\ \leq N\left[ (p+k) \|t^p e^{{-\frac{k-1}{k}}\theta t^{-\sigma}} u \|_{r+\delta}'\right.\\\left.+\delta^{-2m}\|t^{p+1} \peso u\|_{r+\delta}' +\|t^{p+1}\peso F \|_{r+\delta}'   \right],
\end{multline*}
for $0<\theta\le 1$, $p\geq 0$ and $k\geq 2$.
\end{lemma}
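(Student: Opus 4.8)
The plan is to reuse the change of unknown from the proof of Lemma \ref{Lema:regularidad1} and then replace the global Schauder step by the local boundary estimate of Lemma \ref{Lema:regularidad2}. Set $v=t^{p+1}\peso u$. Since $t^{p+1}\peso$ depends only on $t$ and vanishes, together with all its $t$-derivatives, as $t\to 0^+$, while $u$ is bounded on $\overline{B_R^+}\times[0,1]$, the function $v$ lies in $C^\infty(B_R^+\times[0,1])$, satisfies $v(0)=0$ in $B_R^+$, inherits the lateral conditions $v=Dv=\dots=D^{m-1}v=0$ on $\{x_n=0\}\cap\partial B_R^+\times(0,1]$ (the weight is independent of $x$), and solves $\p_t v+\mathcal L v=G$ in $B_R^+\times(0,1]$ with
\[
G=t^{p+1}\peso F+\bigl[(p+1)t^p\peso+\sigma\theta t^{p-\sigma}\peso\bigr]u,
\]
exactly as in \eqref{E:quies es}. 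Applying Lemma \ref{Lema:regularidad2} to $v$ gives
\[
\|\p_t v\|_r'+\|D^{2m}v\|_r'\le N\bigl[\delta^{-2m}\|v\|_{r+\delta}'+\|G\|_{r+\delta}'\bigr],\qquad N=N(n,\varrho,m).
\]

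The second step turns both sides back into the asserted quantities. Because $t^{p+1}\peso$ carries no $x$, one has $D^{2m}v=t^{p+1}\peso D^{2m}u$ and $\|v\|_{r+\delta}'=\|t^{p+1}\peso u\|_{r+\delta}'$, which already produces the middle term $\delta^{-2m}\|t^{p+1}\peso u\|_{r+\delta}'$. For $G$ I would use $\peso\le e^{-\frac{k-1}{k}\theta t^{-\sigma}}$ (valid since $\tfrac{k-1}{k}\le 1$) on the $(p+1)t^p\peso u$ contribution, and the bound \eqref{unaacotacion}, which yields $\sigma\theta t^{p-\sigma}\peso\le\sigma k\,t^p e^{-\frac{k-1}{k}\theta t^{-\sigma}}\le k\,t^p e^{-\frac{k-1}{k}\theta t^{-\sigma}}$ since $\sigma=1/(2m-1)\le 1$, on the remaining contribution; together with $p+1+k\le 2(p+k)$ this gives
\[
\|G\|_{r+\delta}'\le\|t^{p+1}\peso F\|_{r+\delta}'+2(p+k)\,\|t^p e^{-\frac{k-1}{k}\theta t^{-\sigma}}u\|_{r+\delta}'.
\]

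Finally, for the left-hand side I would use the identity $t^{p+1}\peso\,\p_t u=\p_t v-\bigl[(p+1)t^p\peso+\sigma\theta t^{p-\sigma}\peso\bigr]u$ and the same two pointwise bounds to get $\|t^{p+1}\peso\,\p_t u\|_r'\le\|\p_t v\|_r'+2(p+k)\|t^p e^{-\frac{k-1}{k}\theta t^{-\sigma}}u\|_{r+\delta}'$, add this to $\|t^{p+1}\peso D^{2m}u\|_r'=\|D^{2m}v\|_r'$, and insert the estimate for $\|\p_t v\|_r'+\|D^{2m}v\|_r'$ from the first step; absorbing the numerical constants into $N=N(n,\varrho,m)$ yields the claim (no dependence on $R$ enters, as Lemma \ref{Lema:regularidad2} is scale invariant in $B_R^+$). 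I do not foresee a genuine obstacle here: unlike in Lemma \ref{Lema:regularidad1}, the interpolation Lemma \ref{Lema:interpolation} is not needed, since only the top-order quantities $\p_t u$ and $D^{2m}u$ appear; the one point requiring care is keeping the coefficient of the lower-order term proportional to $p+k$ rather than to $(p+k)\theta t^{-\sigma}$, and this is exactly what the factorization in \eqref{unaacotacion} is designed to deliver.
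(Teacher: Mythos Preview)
Your proof is correct and follows exactly the route the paper intends: the paper merely states that Lemmas \ref{Lema:regularidad2} and \ref{Lema:regularidad1} imply Lemma \ref{Lema:regularidad3}, and your argument is precisely the natural way to combine them---the substitution $v=t^{p+1}\peso u$ from the proof of Lemma \ref{Lema:regularidad1}, fed into the local boundary estimate \eqref{regularidad2}, with \eqref{unaacotacion} controlling the lower-order commutator term. The details you supply (in particular the recovery of $\|t^{p+1}\peso\partial_t u\|_r'$ from $\|\partial_t v\|_r'$ and the bookkeeping $p+1+k\le 2(p+k)$) are all sound.
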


Similarly, Lemmas  \ref{Lema:regularidad1} and \ref{Lema:regularidad2_interior} imply the weighted $L^2$ estimate in Lemma \ref{Lema:regularidad4}.
\begin{lemma} \label{Lema:regularidad4}
Let $u$ in $C^\infty(B_R\times [0,1])$ satisfy
$$\p_tu+ \mathcal Lu=F\ \text{ in }\ B_R \times (0,1]$$
and $0<r<r+\delta<R\le 1$. Then there is $N=N(n,\varrho,m)$ such that
\begin{multline*}
\|t^{p+1}\peso \p_t u \|_r+\|t^{p+1}\peso D^{2m} u \|_r 
\\ \leq N\left[ (p+k) \|t^p e^{{-\frac{k-1}{k}}\theta t^{-\sigma}} u \|_{r+\delta}\right.\\\left.+\delta^{-2m}\|t^{p+1} \peso u\|_{r+\delta} +\|t^{p+1}\peso F \|_{r+\delta}   \right],
\end{multline*}
holds for $0<\theta\le 1$, $p\geq 0$ and $k\geq 2$. 
\end{lemma}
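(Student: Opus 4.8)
The plan is to mimic the proof of Lemma \ref{Lema:regularidad1}, simply replacing the global Schauder estimate \eqref{E: Schauder estimate} by its local interior analogue, Lemma \ref{Lema:regularidad2_interior}, which already absorbs the cut-off and produces the factor $\delta^{-2m}$. First I would introduce the weighted function $v=t^{p+1}\peso u$ on $B_R\times[0,1]$. Since $t\mapsto t^{p+1}e^{-\theta t^{-\s}}$, extended by $0$ at $t=0$, is smooth on $[0,1]$ with all its derivatives vanishing at the origin, $v$ belongs to $C^\infty(B_R\times[0,1])$ and $v(0)=0$, so $v$ is admissible for Lemma \ref{Lema:regularidad2_interior}. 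Using that $\mathcal L$ differentiates only in the spatial variables, a direct computation of $\p_t(t^{p+1}\peso)$ gives $\p_t v+\mathcal L v=G$ in $B_R\times(0,1]$ with
\[
G=t^{p+1}\peso F+\big[(p+1)\,t^p\peso+\s\theta\, t^{p-\s}\peso\big]u .
\]

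Next I would apply Lemma \ref{Lema:regularidad2_interior} to $v$, obtaining $\|\p_t v\|_r+\|D^{2m}v\|_r\le N\big[\delta^{-2m}\|v\|_{r+\delta}+\|G\|_{r+\delta}\big]$, and then estimate $G$ term by term. Since $0<\theta\le1$ one has $\peso\le e^{-\frac{k-1}{k}\theta t^{-\s}}$, so the first correction term is bounded by $(p+1)\,t^p e^{-\frac{k-1}{k}\theta t^{-\s}}|u|$; for the second one, the pointwise inequality \eqref{unaacotacion} (together with $\s\le1$) gives $\s\theta\, t^{p-\s}\peso\le k\, t^p e^{-\frac{k-1}{k}\theta t^{-\s}}$. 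Hence
\[
\|G\|_{r+\delta}\le \|t^{p+1}\peso F\|_{r+\delta}+N(p+k)\,\big\|t^p e^{-\frac{k-1}{k}\theta t^{-\s}}u\big\|_{r+\delta}.
\]
Finally, because $t^{p+1}\peso$ is $x$-independent, $D^{2m}v=t^{p+1}\peso D^{2m}u$, and from $\p_t v=t^{p+1}\peso\p_t u+\big[(p+1)t^p\peso+\s\theta t^{p-\s}\peso\big]u$ I would move the lower-order term to the left-hand side, controlling it once more by $N(p+k)\|t^p e^{-\frac{k-1}{k}\theta t^{-\s}}u\|_{r+\delta}$. Combining this with the Schauder estimate for $v$ and the identity $\|v\|_{r+\delta}=\|t^{p+1}\peso u\|_{r+\delta}$ reconstructs $\|t^{p+1}\peso\p_t u\|_r+\|t^{p+1}\peso D^{2m}u\|_r$ on the left and yields exactly the asserted inequality.

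I do not expect a genuine obstacle here: this is a routine weighted $L^2$ estimate, and the interior version differs from the boundary version (Lemma \ref{Lema:regularidad3}) only in which local Schauder estimate of \cite{DongKim} is invoked. The only points that require care are verifying that $v$ has the regularity and the vanishing initial value demanded by Lemma \ref{Lema:regularidad2_interior}, and the bookkeeping of the weight exponents — in particular checking that the extra factor $\s\theta t^{p-\s}\peso$ produced when $\p_t$ hits the weight is dominated by $t^p e^{-\frac{k-1}{k}\theta t^{-\s}}$ with a constant of size $O(p+k)$. This is precisely the content of \eqref{unaacotacion} and explains why the exponent $\tfrac{k-1}{k}$ (rather than $1$) appears in the first term on the right-hand side of the statement.
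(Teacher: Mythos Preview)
Your proposal is correct and follows exactly the route the paper indicates: the paper only says that Lemma \ref{Lema:regularidad4} follows from Lemmas \ref{Lema:regularidad1} and \ref{Lema:regularidad2_interior}, and what you wrote is precisely that argument spelled out --- introduce $v=t^{p+1}\peso u$, compute $G$ as in \eqref{E:quies es}, apply the interior local Schauder estimate of Lemma \ref{Lema:regularidad2_interior} (which supplies the $\delta^{-2m}$ term) in place of the global estimate \eqref{E: Schauder estimate}, and control the weight correction via \eqref{unaacotacion}. The additional care you take in checking $v(0)=0$ and in recovering $\|t^{p+1}\peso\partial_t u\|_r$ from $\|\partial_t v\|_r$ is appropriate and presents no difficulty.
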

\begin{lemma}\label{L:postleibniz} If $\gamma\in \N^n$, $0<t<s$, 
\begin{equation*}
\sum_{\beta<\gamma}\binom{\gamma}{\beta}|\gamma-\beta|!|\beta|!s^{-|\gamma|+|\beta|} t^{-|\beta|}\leq |\gamma|!\, \frac{ t^{1-|\gamma|}}{s-t}.
\end{equation*}
\end{lemma}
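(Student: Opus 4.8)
The plan is to collapse the multi-index sum into a single finite geometric series. First I would reorganize the sum according to the value of $l=|\beta|$. Since $\beta<\gamma$ (componentwise, $\beta\neq\gamma$) forces $|\beta|\le|\gamma|-1$, and since for fixed $l$ the factor $|\gamma-\beta|!\,|\beta|!\,s^{|\beta|-|\gamma|}t^{-|\beta|}=(|\gamma|-l)!\,l!\,s^{l-|\gamma|}\,t^{-l}$ depends only on $l$, the left-hand side becomes
\begin{equation*}
\sum_{l=0}^{|\gamma|-1}(|\gamma|-l)!\,l!\,s^{l-|\gamma|}\,t^{-l}\sum_{\substack{\beta\le\gamma\\ |\beta|=l}}\binom{\gamma}{\beta}.
\end{equation*}

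The key identity I would invoke next is the Vandermonde-type convolution $\sum_{|\beta|=l,\ \beta\le\gamma}\binom{\gamma}{\beta}=\binom{|\gamma|}{l}$, obtained by comparing the coefficient of $x^l$ on both sides of $\prod_{i=1}^n(1+x)^{\gamma_i}=(1+x)^{|\gamma|}$ after expanding each factor and recalling $\binom{\gamma}{\beta}=\prod_i\binom{\gamma_i}{\beta_i}$. Because $(|\gamma|-l)!\,l!\,\binom{|\gamma|}{l}=|\gamma|!$ for every $0\le l\le|\gamma|$, the whole expression simplifies to
\begin{equation*}
|\gamma|!\sum_{l=0}^{|\gamma|-1}s^{l-|\gamma|}\,t^{-l}=|\gamma|!\,s^{-|\gamma|}\sum_{l=0}^{|\gamma|-1}\left(\frac{s}{t}\right)^{l}=|\gamma|!\,s^{-|\gamma|}\,\frac{(s/t)^{|\gamma|}-1}{(s/t)-1},
\end{equation*}
where the last step uses $s/t>1$ to sum the finite geometric series (the case $\gamma=0$ being trivial, the sum being empty).

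It then remains only to estimate this quantity. I would use $(s/t)-1=(s-t)/t$ together with $s^{-|\gamma|}\bigl((s/t)^{|\gamma|}-1\bigr)=t^{-|\gamma|}-s^{-|\gamma|}\le t^{-|\gamma|}$, valid because $0<t<s$; multiplying these gives that the sum is at most $|\gamma|!\,t^{-|\gamma|}\cdot\tfrac{t}{s-t}=|\gamma|!\,\tfrac{t^{1-|\gamma|}}{s-t}$, which is the assertion. I do not anticipate a genuine obstacle here: the only points deserving a little care are justifying the Vandermonde identity in the multi-index setting and keeping track of the summation range $0\le l\le|\gamma|-1$, both entirely elementary.
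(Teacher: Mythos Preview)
Your argument is correct and actually yields the same exact identity as the paper, namely
\[
\sum_{\beta<\gamma}\binom{\gamma}{\beta}|\gamma-\beta|!\,|\beta|!\,s^{-|\gamma|+|\beta|}t^{-|\beta|}
=|\gamma|!\,t^{-|\gamma|}\sum_{i=1}^{|\gamma|}(t/s)^i,
\]
after which both you and the paper bound the finite geometric series by the full one. The only difference lies in how that identity is reached. You group by $l=|\beta|$ and invoke the multinomial Vandermonde identity $\sum_{\beta\le\gamma,\,|\beta|=l}\binom{\gamma}{\beta}=\binom{|\gamma|}{l}$, which turns each $l$-layer into $|\gamma|!\,s^{l-|\gamma|}t^{-l}$ directly. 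The paper instead builds a generating function: it sets $u=x_1+\dots+x_n$, $\varphi(u)=(1-u)^{-1}$, and computes $\partial_x^\gamma\bigl(\varphi(u/s)\varphi(u/t)\bigr)$ at $x=0$ two ways---once by expanding the product as a power series in $u$ and once by Leibniz's rule---thereby obtaining the same identity without isolating the Vandermonde step. Your route is a little more elementary and combinatorial; the paper's is a little more conceptual and makes the Leibniz structure explicit. Either way the estimate follows in one line.
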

\begin{proof} 
Let $f(x)=\varphi(u)$, with $u=(x_{1}+\cdots+x_{n})$ and $\varphi(u)=(1-u)^{-1}$.
Then, $\frac{\partial^{\gamma}}{\partial x^{\gamma}}f(x)=\varphi^{|\gamma|}(u)=|\gamma|!u^{-|\gamma|-1}$.
Now let, $f_{t}(x)=f(\frac{x}{t})$, $\frac{\partial^{\gamma}}{\partial x^{\gamma}}f_{t}(x)=t^{-|\gamma|}\frac{\partial^{\gamma}}{\partial x^{\gamma}}f(\frac{x}{t})$, and taking $x=0$, we have $\frac{\partial^{\gamma}}{\partial x^{\gamma}}f_{t}(0)=|\gamma|!t^{-|\gamma|}$. Now, set $g(x)=f_{s}(x)f_{t}(x)=\psi(u)$, with
\begin{equation*}
\psi(u)=\frac{1}{(1-\frac{u}{s})(1-\frac{u}{t})}\, .
\end{equation*}
Let $|u|<t$. Then
\begin{equation*}
\psi(u)=\sum_{i=0}^{+\infty}\left(u/s\right)^{i}\sum_{j=0}^{+\infty}\left(u/t\right)^{j}=\sum_{i,j=0}^{+\infty}\frac{u^{i+j}}{s^{i}t^{j}}=
\sum_{k=0}^{+\infty}u^{k}\sum_{i+j=k}\frac{1}{s^{i}t^{j}}
\end{equation*}
and
\begin{equation*}
\psi^{(k)}(0)=k!\,t^{-k}\sum_{i=0}^{k}\left(t/s\right)^{i},\ \text{for}\ k\ge 0.
\end{equation*}
Thus,
\begin{equation*}
\frac{\partial^{\gamma}g}{\partial x^{\gamma}}(0)=\psi^{(|\gamma|)}(0)=|\gamma|!\,t^{-|\gamma|}\sum_{i=0}^{|\gamma|}\left(t/s\right)^{i},\ \text{for}\ \gamma\in\N^n.
\end{equation*}
From Leibniz's rule
\begin{align*}
\frac{\partial^{\gamma}g}{\partial x^{\gamma}}(0)&=\sum_{\beta\leq\gamma}\binom{\gamma}{\beta}\,\partial^{\gamma-\beta}f_{s}(0)\partial^{\beta}f_{t}(0)\\
&=\sum_{\beta\leq\gamma}\binom{\gamma}{\beta}|\gamma-\beta|!\,|\beta|!\,s^{-|\gamma|+|\beta|}t^{-|\beta|}.
\end{align*}
It implies that
\begin{equation*}
\sum_{\beta\leq\gamma}\binom{\gamma}{\beta}|\gamma-\beta|!|\beta|!s^{-|\gamma|+|\beta|}t^{-|\beta|}=|\gamma|!\,t^{-|\gamma|}\sum_{i=0}^{|\gamma|}\left(\frac{t}{s}\right)^{i},
\end{equation*}
where dropping the term corresponding to $\beta=\gamma$,
\begin{equation*}
\begin{split}
\sum_{\beta<\gamma}\binom{\gamma}{\beta}|\gamma-\beta|!\,|\beta|!\,s^{-|\gamma|+|\beta|}t^{-|\beta|}=
|\gamma|!\,t^{-|\gamma|}\left(\sum_{i=0}^{|\gamma|}\left(\frac{t}{s}\right)^{i}\right)-|\gamma|!\,t^{-|\gamma|}\\
=|\gamma|!\,t^{-|\gamma|}\sum_{i=1}^{|\gamma|}\left(\frac{t}{s}\right)^{i}\leq|\gamma|!\,t^{-|\gamma|}\sum_{i=1}^{+\infty}\left(\frac{t}{s}\right)^{i}=
|\gamma|!\,\frac{t^{1-|\gamma|}}{s-t}\ ,
\end{split}
\end{equation*}
if $0<t<s$.
\end{proof}
\begin{lemma}\label{L:ultimo} Let $\mathcal{L}=(-1)^m\sum_{|\alpha|\leq 2m}a_\alpha(x)\p^\alpha$ be an elliptic operator in non-variational form whose coefficients satisfy \eqref{parabolicidad} and $a_\alpha$ belong to  $C^{|\alpha|-m}$, for $m<|\alpha|\leq 2m$. Then, $\mathcal L$ can be written in variational form as
\begin{equation*}
\mathcal L=(-1)^m\sum_{|\gamma|,|\beta|\leq m}\p^\gamma(A_{\gamma\beta}(x) \p^\beta),
\end{equation*}
with
\begin{equation*}\label{parabolicidad42}
\sum_{|\gamma|,|\beta|\le m}\|A_{\gamma\beta}\|_{L^\infty(\Omega)}\le\varrho^{-1},\quad
\sum_{|\gamma|=|\beta|=m} A_{\gamma\beta}(x)\xi^{\gamma}\xi^{\beta}\ge \varrho |\xi|^{2m},
\end{equation*}
for $x$ in $\overline\Omega$ and $\xi$ in $\R^n$.  
\end{lemma}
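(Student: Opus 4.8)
The plan is to put $\mathcal L$ into divergence form by a finite downward induction on the order, peeling off at each stage the current top‑order coefficients into a block $\p^\gamma(A_{\gamma\beta}\p^\beta\,\cdot\,)$ and absorbing the Leibniz remainder into the lower‑order part. Set $b_\alpha=a_\alpha$ to start. At the step processing order $2m$, put, for $|\gamma|=|\beta|=m$,
\[
A_{\gamma\beta}=\frac{a_{\gamma+\beta}}{N(\gamma+\beta)},\qquad N(\alpha)=\#\{(\gamma,\beta):\ |\gamma|=|\beta|=m,\ \gamma+\beta=\alpha\}\ge 1 .
\]
These are symmetric in $(\gamma,\beta)$, belong to $C^{m}$ by hypothesis (since $|\gamma+\beta|=2m$), and since $\sum_{\gamma+\beta=\alpha}A_{\gamma\beta}=a_\alpha$ one gets $\sum_{|\gamma|=|\beta|=m}A_{\gamma\beta}\xi^\gamma\xi^\beta=\sum_{|\alpha|=2m}a_\alpha\xi^\alpha\ge\varrho|\xi|^{2m}$ by \eqref{parabolicidad}, so the ellipticity in the statement holds automatically. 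Expanding $\sum_{|\gamma|=|\beta|=m}\p^\gamma(A_{\gamma\beta}\p^\beta u)$ by Leibniz, the order‑$2m$ part equals $\sum_{|\alpha|=2m}a_\alpha\p^\alpha u$ exactly, and the remainder is a differential operator of order $\le 2m-1$ whose coefficient of each derivative of order $2m-j$ is a fixed combination of the $\p^\mu A_{\gamma\beta}$ with $|\mu|=j\ge 1$, hence lies in $C^{m-j}=C^{(2m-j)-m}$ and in $L^\infty$ when $2m-j\le m$. These coefficients are then added to the corresponding $b_\alpha$.

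Next I would iterate. Suppose after some steps $\mathcal L=(-1)^m\bigl[\sum\p^\gamma(A_{\gamma\beta}\p^\beta)+\sum_{|\alpha|\le k}b_\alpha\p^\alpha\bigr]$ with $m<k<2m$, all already defined $A_{\gamma\beta}$ carrying $|\gamma|,|\beta|\le m$ and $|\gamma|+|\beta|>k$, and $b_\alpha\in C^{|\alpha|-m}$ for $m<|\alpha|\le k$, $b_\alpha\in L^\infty$ for $|\alpha|\le m$. For each $\alpha$ with $|\alpha|=k$ fix a splitting $\alpha=\gamma(\alpha)+\beta(\alpha)$ with $|\gamma(\alpha)|=k-m$ and $|\beta(\alpha)|=m$ — so that the \emph{outer} multi‑index $\gamma(\alpha)$ has length exactly $|\alpha|-m$, which matches the available regularity of $b_\alpha$ — and set $A_{\gamma(\alpha)\beta(\alpha)}=b_\alpha$. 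Leibniz again reproduces the order‑$k$ part exactly and leaves a remainder of order $\le k-1$ whose coefficient of each derivative of order $k-j$ lies in $C^{(k-j)-m}$, $1\le j\le k-m$; these are absorbed into the $b_\alpha$ with $|\alpha|\le k-1$, which preserves the inductive hypothesis and never modifies the block $|\gamma|=|\beta|=m$, so the ellipticity persists. When $k$ reaches $m$ what remains is $\sum_{|\alpha|\le m}b_\alpha\p^\alpha$ with all $b_\alpha\in L^\infty$; setting $A_{0,\alpha}=b_\alpha$ (legitimate since $|\alpha|\le m$ and $A_{0,\alpha}\in L^\infty$) absorbs it with no further remainder and completes the reduction.

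Finally, each $A_{\gamma\beta}$ is a finite, $n$‑ and $m$‑dependent linear combination of the $a_\alpha$ and of their derivatives up to order $|\alpha|-m$, so $\|A_{\gamma\beta}\|_{L^\infty(\Omega)}\le C(n,m)\sum_{|\alpha|\le 2m}\|a_\alpha\|_{C^{(|\alpha|-m)_+}(\overline\Omega)}$; in the situation of Theorem \ref{th:observability1} the coefficients obey \eqref{coeff:space_analytic}, so these norms are bounded by $C(\varrho,n,m)$ and, after replacing $\varrho$ by a smaller constant $\varrho=\varrho(\varrho,n,m)$, the bound \eqref{parabolicidad1} follows. I do not expect a genuine obstacle here: the argument is pure bookkeeping, and the only delicate point is the one stressed above — splitting any multi‑index of order $>m$ as $\gamma+\beta$ with $|\gamma|=|\alpha|-m$, so the differentiations falling on the coefficients never exceed the regularity $C^{|\alpha|-m}$ that is assumed and is propagated by the induction — together with the symmetrization at the top order, which is what makes the ellipticity condition read in the stated form.
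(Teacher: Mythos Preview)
Your argument is correct and follows essentially the same route as the paper: the paper also splits each $a_\alpha\partial^\alpha$ with $|\alpha|>m$ as $\partial^\gamma(a_\alpha\partial^\beta)$ with $|\gamma|=|\alpha|-m$, $|\beta|=m$, and absorbs the Leibniz remainder by induction (packaged there as a separate Lemma~\ref{lemmapuñetero}), handling the low-order part $|\alpha|\le m$ directly as $A_{0,\beta}=a_\beta$ and checking ellipticity by comparing the top-order symbols. Your single downward induction and the direct symmetrization at the top order are a slightly more streamlined organization of the same idea, and your remark that the $L^\infty$ bound on the $A_{\gamma\beta}$ requires a possibly smaller $\varrho$ (depending on derivatives of the $a_\alpha$) is exactly the caveat the paper makes when invoking this lemma.
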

\begin{proof}
We write
\begin{equation*}
\sum_{|\alpha|\leq 2m}a_\alpha \p^\alpha=\sum_{|\alpha|\leq m}a_\alpha \p^\alpha+\sum_{m+1\leq|\alpha|\leq 2m}a_\alpha \p^\alpha\triangleq I+J.
\end{equation*}
Then
$$I=\sum_{|\alpha|\leq m}a_\alpha \p^\alpha=\sum_{|\gamma|,|\beta|\leq m}\p^\gamma(A_{\gamma\beta} \p^\beta), $$
with $A_{\gamma\beta}=0$ if $|\gamma|\neq 0$ and $A_{\gamma\beta}=a_\beta$, if $\gamma=0$; and $I$ can be written in non-variational form. For each $\alpha\in \N^n$ with $m+1\le |\alpha|\le 2m$, define 
\begin{equation*}
c_\alpha=\#\lbrace (\gamma,\beta):\ \gamma+\beta=\alpha\ \text{and}\ |\beta|=m \rbrace.
\end{equation*}
Then,
\begin{equation*}
J=\sum_{j=m+1}^{2m} \sum_{|\gamma|=j-m,|\beta|=m}\frac{1}{c_{\gamma+\beta}}a_{\gamma+\beta}\p^{\gamma+\beta}\triangleq\sum_{j=m+1}^{2m} \sum_{|\gamma|=j-m,|\beta|=m}A_{\gamma\beta}\p^{\gamma+\beta}.
\end{equation*}
Since $A_{\gamma\beta}\in C^{|\gamma|+|\beta|-m}$, we can apply Lemma \ref{lemmapuñetero} below to each of the  terms $A_{\gamma\beta}\p^{\gamma+\beta}$ and we are done. It remains to check that the new expression satisfies the ellipticity condition: we notice that 
\begin{equation*}\label{igualdadoperadores}
\sum_{|\alpha|=  2m}a_\alpha(x_0)\p^\alpha=\sum_{|\gamma|=|\beta|=  m}A_{\gamma\beta}(x_0)\p^{\gamma+\beta},
\end{equation*} 
for each fixed $x_0$ in $\overline\Omega$. Therefore, if we apply these constant coefficients operators to any rapidly decreasing function and take Fourier transform, we get 
$$\sum_{|\alpha|=  2m}a_\alpha(x_0)\xi^\alpha=\sum_{|\gamma|=|\beta|=  m}A_{\gamma\beta}(x_0)\xi^{\beta+\gamma},\ \ \ \text{for any }\xi\in \R^n,$$
which implies the uniform ellipticity of the operator in variational form, when the operator in non-variational form is uniformly elliptic.
\end{proof}
\begin{lemma}\label{lemmapuñetero}
For $m\in \N$ and $j=m+1,\ldots,2m$ the following holds:
if $a\in C^{j-m}$ and $\alpha\in \N^n$ is a multi-index with $|\alpha|\leq j$,  we can write
\begin{equation}\label{formavariacional}
a\p^\alpha=\sum_{|\gamma|,|\beta|\leq m}\p^\gamma(b_{\gamma\beta}\p^\beta)
\end{equation}
for some $b_{\gamma\beta}\in C^{|\gamma|}$.
\end{lemma}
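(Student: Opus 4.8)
The plan is to prove, by induction on $|\alpha|$, a self-contained sharpening of the statement: \emph{if $|\alpha|\le 2m$ and $a\in C^{(|\alpha|-m)^+}$, where $(s)^+=\max\{s,0\}$, then $a\,\p^\alpha=\sum_{|\gamma|,|\beta|\le m}\p^\gamma(b_{\gamma\beta}\p^\beta)$ with each $b_{\gamma\beta}\in C^{|\gamma|}$.} This will imply Lemma \ref{lemmapuñetero} immediately, since $j\ge m+1$ and $|\alpha|\le j$ force $(|\alpha|-m)^+\le j-m$, hence $a\in C^{j-m}\subseteq C^{(|\alpha|-m)^+}$.

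For the base case $|\alpha|\le m$ I would simply take $\gamma=0$, $\beta=\alpha$ and $b_{0\alpha}=a$, which lies in $C^0$ as required. For the inductive step, with $m+1\le|\alpha|\le 2m$, I would first choose a sub-multi-index $\beta\le\alpha$ with $|\beta|=m$ (possible because $|\alpha|\ge m$) and set $\gamma=\alpha-\beta$, so that $1\le|\gamma|=|\alpha|-m\le m$. Applying the Leibniz rule to $\p^\gamma(a\,\p^\beta u)$ and isolating the top-order term $\delta=\gamma$ gives
\begin{equation*}
a\,\p^\alpha u=\p^\gamma\!\left(a\,\p^\beta u\right)-\sum_{\delta<\gamma}\binom{\gamma}{\delta}\left(\p^{\gamma-\delta}a\right)\p^{\delta+\beta}u .
\end{equation*}
The first term is already of the required shape, with outer coefficient $a\in C^{|\alpha|-m}=C^{|\gamma|}$. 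Each remaining term has the form $\widetilde a\,\p^{\widetilde\alpha}$ with $\widetilde\alpha=\delta+\beta$ and $\widetilde a=\p^{\gamma-\delta}a$; since $\delta<\gamma$ one has $|\widetilde\alpha|=|\delta|+m\le|\alpha|-1<|\alpha|$, so the order has strictly dropped and the inductive hypothesis applies to it. Finally I would collect all the resulting operators $\p^{\gamma'}(b_{\gamma'\beta'}\p^{\beta'})$ — finitely many, since the order decreases at each stage — according to the pair $(\gamma',\beta')$, adding their coefficients; a finite sum of $C^{|\gamma'|}$ functions is again $C^{|\gamma'|}$, which yields the claimed representation.

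The step that needs care — and that I expect to be the only genuine obstacle — is the regularity bookkeeping: to invoke the inductive hypothesis on $\widetilde a\,\p^{\widetilde\alpha}$ I need $\widetilde a\in C^{(|\widetilde\alpha|-m)^+}=C^{|\delta|}$. Here one uses that $\delta\le\gamma$ implies $|\gamma-\delta|=|\gamma|-|\delta|=(|\alpha|-m)-|\delta|$, whence $\widetilde a=\p^{\gamma-\delta}a\in C^{(|\alpha|-m)-|\gamma-\delta|}=C^{|\delta|}$, exactly the regularity the reduced statement demands at the lower order. Everything else is routine multi-index manipulation.
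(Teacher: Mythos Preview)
Your proof is correct and follows essentially the same route as the paper's: split $\alpha=\gamma+\beta$ with $|\beta|=m$, apply Leibniz to $\p^\gamma(a\,\p^\beta)$, and feed the lower-order remainders $(\p^{\gamma-\delta}a)\,\p^{\delta+\beta}$ back into the induction, with the same regularity bookkeeping $\p^{\gamma-\delta}a\in C^{|\delta|}$. The only cosmetic difference is that you induct directly on $|\alpha|$ with the sharp hypothesis $a\in C^{(|\alpha|-m)^+}$, whereas the paper inducts on the auxiliary parameter $j$; the two schemes unwind to the same computation.
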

\begin{proof}
For each fixed $m\in \N$ we prove Lemma \ref{lemmapuñetero} by induction on $j$.

\textit{Case $j=m+1$:} if $|\alpha|\leq m$, $a\p^\alpha$ already has the required form. Then, we only need to check the statement for multi-indices $\alpha\in \N^n$ with $|\alpha|=m+1$. If $j=m+1$ and $a\in C^1$, $\alpha=e_i+\beta$ for some multi-index $\beta$ with $|\beta|=m$. Then,
$$a\p^\alpha=\p^{e_i}(a\p^\beta)-\p^{e_i}a\p^\beta,$$
which already has the required form because $|e_i|$ and $|\beta|\leq m$.

Now, assuming that the statement holds for $j=m+1,\ldots,k$, we prove it for $j=k+1$.
Let $a\in C^{k+1-m}$. If $\alpha\in \N^n$ with $|\alpha|\leq k$, the induction hypothesis shows that $a\p^\alpha$ can be written as \eqref{formavariacional}; hence, we only need to check that the statement holds for multi-indices $\alpha$ with $|\alpha|=k+1$. In this case, we can write $\alpha=\gamma+\beta$ with $|\gamma|=k+1-m$ and $|\beta|=m$; then, applying Leibniz's rule we get
\begin{equation}\label{ultimopaso}
a\p^\alpha=a\p^{\gamma+\beta}=\p^\gamma(a\p^\beta)-\sum_{0<\sigma\leq \gamma}\binom{\gamma}{\sigma}\p^\sigma a \p^{\beta+\gamma-\sigma}
\end{equation}
and we want to apply the induction hypothesis to  each of the terms of the sum in \eqref{ultimopaso}. Thus we only need to check that each operator $\p^\sigma a \p^{\beta+\gamma-\sigma}$ satisfies hypothesis which fall into one of the previous steps of the induction hypothesis:
\begin{itemize}
\item Because $a\in C^{k+1-m}$, we have $\p^\sigma a\in C^{k+1-|\sigma|-m}$.
\item Because the sum in \eqref{ultimopaso} runs for multi-indices $\sigma$ with $1\leq |\sigma|=|\gamma|\leq k+1-m$, we have that $|\beta+\gamma-\sigma|=k+1-|\sigma|\leq k$.
\end{itemize}
This finishes the proof of Lemma \ref{lemmapuñetero}.
\end{proof}


\begin{thebibliography}{99}

 
\bibitem{Adams} R. A. Adams. \emph{Sobolev Spaces}. Academic Press, New York-London, 1975.

\bibitem{AE} J. Apraiz, L. Escauriaza. \emph{Null-control and measurable sets}. ESAIM Control Optim. Calc. Var. \textbf{19}, 1 (2013) 239-254.

\bibitem{AEWZ} J. Apraiz, L. Escauriaza, G. Wang, C. Zhang. \emph{Observability inequalities and measurable sets}.  J. Eur. Math. Soc. \textbf{16} (2014) 2433-2475.


\bibitem{DRu} S. Dolecki, D. L. Russell. \emph{A general theory of observation and control}. SIAM J. Control Optim. \textbf{15} (1977) 185-220.

\bibitem{DongKim} H. Dong, K. Doyoon. \emph{On the $L_p$-Solvability of Higher Order Parabolic and Elliptic Systems with BMO Coefficients}. Arch. Rational Mech. Anal. \textbf{199} (2011) 889-941.

\bibitem{Eidelman} S. D. Eidelman. \emph{Parabolic Systems}. North Holland Pub. Co. 1969.

\bibitem{EF} L. Escauriaza, F. J. Fern\'andez. \emph{Unique continuation for parabolic operators.}  Ark. Mat. \textbf{41}, 1 (2003) 35-60. 

\bibitem{EFV} L. Escauriaza, F. J. Fern\'andez, S. Vessella. \emph{Doubling properties of caloric functions.} Appl. Anal.\textbf{85}, 1-3 (2006) 205-223. 

\bibitem{EMZ} L. Escauriaza, S. Montaner, C. Zhang. \emph{Observation from measurable sets for parabolic analytic evolutions and applications.} J. Math. Pures Appl. \textbf{104} (2015) 837-867.




 \bibitem{Friedman3} A. Friedman. \emph{Classes of solutions of linear systems of partial differential equations of parabolic type}. Duke Math. J. \textbf {24}, 3 (1957) 433-442.

\bibitem{Friedman2} \bysame \emph{On the regularity of the solutions of non linear elliptic and parabolic systems of partial differential equations}. J. Math. Mech. {\textbf 7} (1958) 43-59.

\bibitem{Friedman1} \bysame \emph{Partial Differential Equations}. Holt, Rinehart and Winston, Inc. 1969.


\bibitem{FursikovOImanuvilov} A. Fursikov, O.Yu. Imanuvilov. \emph{Controllability of Evolution Equations}. Seoul National University, Korea, Lecture Notes Series \textbf{34}, 1996.



\bibitem{GilbargTrudinger} D. Gilbarg, N. S. Trudinger. \emph{Elliptic partial differential equations of second order.} Springer-Verlag, Berlin, 1998. 


\bibitem{ImanuvilovYamamoto2} O. Imanuvilov, M. Yamamoto. \emph{Carleman inequalities for parabolic equations in Sobolev spaces of negative order and exact controllability for semilinear parabolic equations}.  Publ. Res. Inst. Math. Sci. \textbf{39}, 2 (2003) 227-274. 

\bibitem{KinderlehrerNirenberg} D.  Kinderlehrer, L. Nirenberg. \emph{Analyticity at the Boundary of Solutions of Nonlinear Second-Order Parabolic Equations}. Commun. Pur. Appl. Math.  \textbf{31}, 3  (1978) 283-338.


\bibitem{Komatsu} G. Komatsu. \emph{Analyticity up to the boundary of solutions of nonlinear parabolic equations}. Commun. Pur. Appl. Math.  \textbf{32}, 5  (1979) 669-720.

\bibitem{G. LebeauL. Robbiano} G. Lebeau, L. Robbiano, \emph{Contr\^ole exact de l'\'equation de la chaleur.}  Commun. Partial Differ. Eqtn.
\textbf{20} (1995) 335-356.


\bibitem{LandisOleinik} E. M. Landis, O. A. Oleinik. \emph{Generalized analyticity and some related properties of solutions of elliptic and parabolic equations}. Russ. Math. Surv. \textbf{29}, 2  (1974) 195-212.

\bibitem{Levi1} E.E. Levi. \emph{Sulle equazioni lineari totalmente ellittiche alle derivate parziali}. Rend. del Circ. Mat. di Palermo \textbf{24}, 1  (1907) 275-317.

\bibitem{Lieberman} G. M. Lieberman. \emph{Second order parabolic differential equations}. World Scientific, Singapore 1996.


\bibitem{Lions1} J. L. Lions \emph{Controlabilit\'e Exacte, Perturbations et Stabilisation de Syst\`emes Distribu\'es}. Masson, Paris 1988, Vol. I: Controlabilit\'e Exacte.

\bibitem{Malinnikova} E. Malinnikova. \emph{Propagation of smallness for solutions of generalized Cauchy-Riemann systems}. P. Edinburgh Math. Soc.  \textbf{47}  (2004) 191-204.



\bibitem{Miller2} L. Miller. \emph{A direct Lebeau-Robbiano strategy for the observability of heat-like semigroups.} Discrete Contin. Dyn. Syst. Ser. B {\bf 14}, 4 (2010) 1465-1485.

\bibitem{MizelSeidman} V. J. Mizel, T. I. Seidman. \emph{An abstract bang-bang principle and time-optimal boundary control of the heat equaton.} SIAM J. Control. Optim. \textbf{35}, 4 (1997) 12014-1216.



\bibitem{Nadirashvili2} N. S. Nadirashvili. \emph{A generalization of Hadamard's three circles theorem.} Moscow Univ. Math. Bull. \textbf{31}, 3 (1976) 30-32.

\bibitem{Nadirashvili} N. S. Nadirashvili. \emph{Estimation of the solutions of elliptic equations with analytic coefficients which are bounded on some set}. Moscow Univ. Math. Bull. \textbf{34}, 2 (1979) 44-48.

\bibitem{Takac0} P. Tak\'a\v{c}. \emph{Analyticity of essentially bounded solutions to semilinear parabolic systems and validity of the Ginzburg-Landau equation}. SIAM J. Math. Anal. \textbf{27}, 2 (1996) 424-448.

\bibitem{Takac} \bysame \emph{Space-time analyticity of weak solutions to linear parabolic systems with variable coefficients}. J. Funct. Anal. \textbf{263} (2012) 50-88.

\bibitem{Tanabe} H. Tanabe. \emph{On differentiability and analyticity of solutions of weighted elliptic boundary value problems}. Osaka J. Math. \textbf{2} (1965) 163-190.


\bibitem{PW1} K. D. Phung, G. Wang. \emph{An observability estimate for parabolic equations from a general measurable set in time and its applications}.  J. Eur. Math. Soc. \textbf{15} (2013) 681-703.

\bibitem{PWZ} K. D. Phung, L. Wang, C. Zhang. \emph{Bang-bang property for time optimal control of semilinear heat equation}.  Ann. I. H. Poincar\'e  (C) Nonlinear Analysis \textbf{31} (2014) 477-499.


\bibitem{Poon} C. C. Poon. \emph{Unique continuation for parabolic equations}. Comm. Part. Diff. Eq. \textbf{21} (1996) 521-539.

\bibitem{Solonnikov} V.A. Solonnikov. \emph{On boundary value problems for linear parabolic systems of differential equations of general form}. (Russian), Trudy Mat. Inst. Steklov. 83, 3–163 (1965); English translation: Proceedings of the Steklov Institute of Mathematics. No. 83 (1965): Boundary value problems of mathematical physics. III. Edited by O. A. Ladyzhenskaya. Translated from the Russian by A. Jablonski, American Mathematical Society, Providence, R.I. 1967 iv+184 pp.

\bibitem{Vessella} S. Vessella. \emph{\it A continuous dependence result in the analytic continuation problem.} Forum Math. \textbf{11}, 6  (1999), 695-703.




\bibitem{W1} G. Wang. \emph{$L^\infty$-Null controllability for the heat equation and its consequences for the time optimal control problem.} SIAM J. Control Optim. {\bf 47}, 4 (2008) 1701-1720.



\bibitem{C1} C. Zhang. \emph{An observability estimate for the heat equation from a product of two measurable sets}. J. Math. Anal. Appl. \textbf{396}, 1 (2012) 7-12.
\end{thebibliography}
\end{document}